\def\e{{\rm e}}
\def\cic{\boldsymbol}
\def\eps{\varepsilon}
\def\d{{\rm d}}
\def\dist{{\rm dist}}
\def\R {\mathbb{R}}
\def\X {{\mathcal X}}
\def\B {{B}}
\def\E {{\mathbb E}}
\def\A {{\mathcal A}}
\def\RR {{\mathcal R}}
\def\F {{\mathcal F}}
\def\M {{\mathsf M}}
\def \l {\langle}
\def \r {\rangle}
\def \and{\qquad\text{and}\qquad}
\def \no#1#2#3 {{\bf #1} (#3), #2.}
\def \eds#1#2#3 {#1, #2, #3.}
\newtheorem{proposition}{Proposition}
\newtheorem{theorem}{Theorem}
\newtheorem*{theorem*}{Theorem}
\newtheorem*{proposition*}{Proposition}
\newtheorem{corollary}{Corollary}
\newtheorem{lemma}[proposition]{Lemma}
\theoremstyle{definition}
\newtheorem{remark}[proposition]{Remark}
\numberwithin{proposition}{section}
\numberwithin{equation}{section}
\title[Banach-valued  singular integrals]{Banach-valued multilinear singular integrals}
 \author{Francesco Di Plinio} \address{\noindent Department of Mathematics, University of Virginia,  \newline \indent Kerchof Hall,  Box 400137, Charlottesville, VA 22904-4137, USA   }
 \email[F.\ Di Plinio]{francesco.diplinio@virginia.edu}
 \author{Yumeng Ou}
 \address{\noindent Department of Mathematics, Massachusetts Institute of Technology, \newline \indent  77 Massachusetts Avenue, Cambridge, MA 02139, USA }
\email[Y.\ Ou]{yumengou@mit.edu}
\subjclass[2000]{Primary: 42B20. Secondary: 42B25}
 \keywords{UMD spaces, multilinear multipliers, multi-parameter singular integrals, outer measure theory}
\thanks{F Di Plinio was partially
supported by the National Science Foundation under the grant
   NSF-DMS-1500449 and NSF-DMS-1650810. Y Ou was partially supported by   the National Science Foundation under the grant
  NSF-DMS 0901139}
\begin{document}

\begin{abstract} We develop a general framework for the analysis of operator-valued multilinear multipliers  acting on  Banach-valued functions. 
Our main result is a  Coifman-Meyer type theorem  for operator-valued multilinear multipliers acting on suitable tuples of UMD spaces. A concrete case of our theorem is a multilinear generalization of Weis's operator-valued H\"ormander-Mihlin linear multiplier theorem \cite{Weis01}.  Furthermore,   we  derive from our main result a wide range of mixed $L^p$-norm estimates for multi-parameter multilinear paraproducts,    leading to a novel  mixed norm version of the partial fractional  Leibniz rules of  Muscalu et.\ al.\ \cite{MPTT1}. Our approach works just as well for the more singular tensor products of a one-parameter Coifman-Meyer multiplier with a bilinear Hilbert transform, extending results of Silva \cite{Silva12}. 
  We also prove several operator-valued $T (1)$-type theorems both in one parameter, and of multi-para\-meter, mixed-norm  type. A distinguishing feature of our $T(1)$ theorems is that   the usual  explicit assumptions on the distributional kernel of $T$ are replaced with testing-type conditions. Our proofs rely on a newly developed Banach-valued version of the outer $L^p$ space theory of Do and Thiele \cite{DoThiele15}.
\end{abstract}
 
\maketitle



\section{Introduction}

The first main purpose  of this article is a systematic \emph{operator-valued} generalization of the    theory of  \emph{multilinear} Calder\'on-Zygmund singular integrals, pioneered by  Coifman and Meyer in \cite{CM1}, Kenig and Stein in \cite{KS}  and Grafakos and Torres in \cite{GT}. Besides the intrinsic interest,   operator-valued multilinear multiplier theorems  provide a novel approach to  multilinear  \emph{multi-parameter} singular integrals, such as  the multi-parameter paraproducts of Muscalu, Pipher, Tao, Thiele \cite{MPTT1}, which will be tackled by suitable iterations of the one-parameter, operator-valued  results. This point of view has been fruitfully employed in the linear theory. For instance, Journ\'e's multi-parameter $T(1)$ theorem \cite{Jou} is obtained by iteration of the one-parameter, vector-valued  result. However, this idea seems to be   new in the multilinear case.
As a byproduct of our methods,  we  obtain a novel  mixed norm version of the multi-parameter fractional  Leibniz rules of \cite{MPTT1}, answering a question posed by Kenig in connection with the work \cite{KPV93} on local well-posedness of the generalized Korteweg-de Vries equation.

\begin{corollary}\label{LR}[Fractional Leibniz rules]
For any $\alpha, \beta>0$, and functions $f_i:\R_x\times\R_y\to \mathbb C$, $i=1,2$, there holds
\[
\begin{split}
\left\|D_x^\alpha D_y^\beta (f_1f_2)\right\|_{L_x^{r,\infty}(L_y^s)}\lesssim &\quad \left\|f_1\right\|_{L_x^{p_1}(L_y^{q_1})}\left\|D_x^\alpha D_y^\beta f_2\right\|_{L_x^{p_2}(L_y^{q_2})}+\left\|D_x^\alpha f_1\right\|_{L_x^{p_1}(L_y^{q_1})}\left\|D_y^\beta f_2\right\|_{L_x^{p_2}(L_y^{q_2})}\\
&+\left\|D_y^\beta f_1\right\|_{L_x^{p_1}(L_y^{q_1})}\left\|D_x^\alpha f_2\right\|_{L_x^{p_2}(L_y^{q_2})}+\left\|D_x^\alpha D_y^\beta f_1\right\|_{L_x^{p_1}(L_y^{q_1})}\|f_2\|_{L_x^{p_2}(L_y^{q_2})}
\end{split}
\]
whenever 
\[\textstyle
\frac{1}{q_1}+\frac{1}{q_2}=\frac{1}{s},\qquad 1<q_1, q_2\leq\infty,\;1<s<\infty
\]
and
\[\textstyle
\frac{1}{p_1}+\frac{1}{p_2}=\frac{1}{r},\qquad 1\leq p_1, p_2\leq \infty,\;\max\left(\frac{1}{2},\frac{1}{1+\alpha}\right)\leq r<\infty.
\]
And the symmetric estimate holds true with the two variables interchanged. If $r>\max\left(\frac{1}{2},\frac{1}{1+\alpha}\right)$, the strong type estimate also holds.
\end{corollary}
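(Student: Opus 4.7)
The plan is to obtain the stated bi-parameter mixed-norm fractional Leibniz rule as an iterated application of the paper's main operator-valued Coifman--Meyer theorem in the $x$ variable, with fibre $L^s(\R_y)$ and operator-valued symbols built out of one-parameter paraproducts in $y$. To start, fix standard Littlewood--Paley resolutions $\mathrm{Id}=\sum_j \Delta_j^x$ and $\mathrm{Id}=\sum_k \Delta_k^y$. Writing $f_1 f_2$ as a double sum and comparing the relative sizes of the two factors' $x$-frequencies and $y$-frequencies \emph{separately}, I split $f_1f_2$ into four bi-parameter paraproducts $\Pi_{\sigma_1\sigma_2}(f_1,f_2)$, $\sigma_1,\sigma_2\in\{\mathrm{H},\mathrm{L}\}$, with $\mathrm{H}$ in position $i\in\{1,2\}$ indicating the factor holding the higher frequency in the corresponding variable. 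Applying $D_x^\alpha D_y^\beta$ to each $\Pi_{\sigma_1\sigma_2}$, the symbols $|\xi|^\alpha,|\eta|^\beta$ are absorbed into the high-frequency factor in each variable, at the cost of perturbing the Littlewood--Paley bump by one of the same support; this recovers precisely the four terms on the right-hand side of the stated inequality.

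Each bi-parameter paraproduct $\Pi_{\sigma_1\sigma_2}$ can be viewed, in the $x$-variable alone, as a one-parameter bilinear paraproduct with a Coifman--Meyer symbol $m(\xi_1,\xi_2)$ taking values in the space of bounded bilinear operators $L^{q_1}(\R_y)\times L^{q_2}(\R_y)\to L^s(\R_y)$. The value $m(\xi_1,\xi_2)$ is itself a one-parameter paraproduct in $y$, uniformly bounded over $(\xi_1,\xi_2)$ by the classical Coifman--Meyer theorem. Since each of $L^{q_i}(\R_y),L^s(\R_y)$ is UMD for $1<q_i,s<\infty$, the hypotheses of the paper's operator-valued Coifman--Meyer theorem are met, and its conclusion yields
\[
\bigl\|\Pi_{\sigma_1\sigma_2}(g_1,g_2)\bigr\|_{L^r_x(L^s_y)}\lesssim \|g_1\|_{L^{p_1}_x(L^{q_1}_y)}\|g_2\|_{L^{p_2}_x(L^{q_2}_y)}
\]
in the full range $1<p_1,p_2\le\infty$, $1<r<\infty$. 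Substituting the absorbed forms of the two factors gives the strong-type estimate asserted at the very end of the statement.

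At the threshold $r=\max(1/2,1/(1+\alpha))$ the operator-valued multilinear theorem only delivers the weak-type $L^{r,\infty}_x$ conclusion, which accounts for the weak norm on the left. To reach $1/(1+\alpha)$ rather than the unconditional lower bound $1/2$ when $\alpha<1$, I adapt the Muscalu--Pipher--Tao--Thiele device from \cite{MPTT1}: for the paraproduct in which $D_x^\alpha$ would fall on the low-frequency factor, one inserts a fat Littlewood--Paley projector so that $D_x^\alpha$ is transferred to the high-frequency side, paying an $\alpha$-smoothing factor whose scale-decay restores summability, and then concludes by real interpolation against the trivial $L^\infty$ bound.

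The substantive difficulty is this last step in the operator-valued, mixed-norm setting: after the Muscalu--Pipher--Tao--Thiele manipulation one must verify that the resulting operator-valued symbols still lie in the class handled by the main theorem of the paper, i.e.\ that their $y$-fibre values are uniformly bounded bilinear operators on the UMD triples $(L^{q_1},L^{q_2},L^s)$, with quantitative bounds sharp enough to be summed in scale. A secondary technical point is the handling of the endpoints $q_i=\infty$ or $p_i=\infty$, which requires feeding the paraproducts into the paper's operator-valued $T(1)$-type theorems---whose testing-type hypotheses are designed to capture the $\mathrm{BMO}$-endpoint on the fibre side---rather than the operator-valued Coifman--Meyer theorem alone.
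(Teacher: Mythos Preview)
Your overall strategy—freeze the $x$-frequencies, view the $y$-paraproduct as the value of an operator-valued symbol, and apply the paper's operator-valued Coifman--Meyer theorem—is exactly the route the paper takes. In the paper this is packaged as Corollary~\ref{BHT} with $\Gamma_1=\{0\}$, from which Corollary~\ref{LR} is said to follow ``via standard arguments'' (the bi-parameter paraproduct splitting and derivative absorption you describe).

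However, your hypothesis check has a real gap. You write that the symbol value $m(\xi_1,\xi_2)$ is ``a one-parameter paraproduct in $y$, uniformly bounded over $(\xi_1,\xi_2)$'' and that this, together with UMD, suffices for Theorem~\ref{CMT}. It does not. The assumption of Theorem~\ref{CMT} is the \emph{multilinear $\mathcal R$-boundedness} condition \eqref{derivatives}, which in the $L^{q_j}$ lattice setting becomes the $\ell^2$--$\ell^2$--$\ell^\infty$ bound \eqref{condLqj} of Corollary~\ref{REVCOR}: for any finite family $\{\mathsf m_j\}\subset\mathcal M$ one needs
\[
\sum_j \mathsf m_j(g_1^j,g_2^j,g_3^j)\lesssim \|\{g_{\sigma(1)}^j\}\|_{L^{q_1}(\ell^2)}\|\{g_{\sigma(2)}^j\}\|_{L^{q_2}(\ell^2)}\|\{g_{\sigma(3)}^j\}\|_{L^{q_3}(\ell^\infty)}
\]
uniformly over permutations $\sigma$. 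Uniform (scalar) boundedness of each $\mathsf m_j$ is strictly weaker and is not what the theorem asks for. The paper closes this gap by invoking the known vector-valued estimates for families of Coifman--Meyer multipliers (classical) and, in the BHT case, Silva's result \cite{Silva12}; you should do the same rather than appeal to uniform bounds.

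Two smaller points. First, your paragraph on the threshold $r=\max(1/2,1/(1+\alpha))$ reads as if $1/(1+\alpha)$ is an \emph{improvement} over $1/2$; when $\alpha<1$ it is the more restrictive constraint, and the paper simply defers this refinement to the standard references \cite{MPTT1,MuscSchlII}. Second, there is no need to route the $p_i=\infty$ endpoints through the $T(1)$ theorems: Theorem~\ref{CMT} (via Proposition~\ref{thmmodel}) already allows $1<p_1,p_2\le\infty$. The $q_i=\infty$ case, on the other hand, is genuinely outside the range of Corollary~\ref{REVCOR} (which needs $1<q_1,q_2,q_3<\infty$) and is not addressed by the $T(1)$ theorems either; this endpoint requires a separate argument that neither you nor the paper spell out.
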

While the recent article \cite{TorWard15} by Torres and Ward   obtains the mixed norm estimates of Corollary \ref{LR}  in the reflexive Banach  range  through techniques of different nature, our method allows for choices of  exponents $(p_1,p_2,r)$  outside the Banach triangle range, up to the natural weak endpoint $p_1=p_2=1$. In fact, the need for an operator-valued \emph{multilinear} theory arises when dealing with exponent tuples outside the Banach triangle range, which cannot be tackled by simply combining estimates for  suitable \emph{linear} multiparameter square-maximal functions and H\"older's inequality. See for example the big difficulty gap between the Banach and outside-Banach ranges in the multiparameter theorem of \cite{MPTT1}.

The second main goal we pursue in our article are operator-valued  $T(1)$ theorems, in one and multiple parameters,  where the more customary kernel assumptions are completely replaced by conditions of testing type. In the operator-valued setting, such multi-parameter $T(1)$ theorems seem to be the first of their kind. Even in the scalar-valued case, our result has the advantage that in however many parameters, the testing condition is always of simple tensor product form, allowing for the argument to be iterated.

Both families of results introduced above are obtained within the newly developed setting of  Banach-valued   outer $L^p$ theory, generalizing the scalar case introduced by Do and Thiele in \cite{DoThiele15}.
   In contrast to  the related Banach-valued tent spaces of \cite{HytNeePor08,HytWeis10,KempVV}, the local nature of our setting allows for estimation of  multilinear multipliers  outside the Banach triangle, as in, for instance, \cite{MPTT1}. Another advantage is that suitable modifications of our tools are amenable to treat  modulation-invariant operators, in a similar fashion to Section 5 of \cite{DoThiele15}. This will be performed in   a forthcoming article.

\subsection{An operator-valued Coifman-Meyer theorem}
Our first main result is an operator-valued (or more precisely, trilinear form-valued) theorem of Coifman-Meyer type, which is Theorem \ref{CMT}, detailed in Section \ref{SecCMT}. We briefly sketch its content here, without the pretense of being fully rigorous. The basic setup  features  three complex UMD spaces $\X_1,\X_2,\mathcal X_3$  and a trilinear form $\mathsf{id}: \X_1\times \X_2 \times \X_3\to \mathbb C$ such that   \begin{equation}
\label{trcontfm}
|\mathsf{id}(x_1,x_2,x_3)|\leq \prod_{j=1,2,3}\|x_j\|_{\X_j}, \qquad x_j \in \X_j, \,j=1,2,3.\end{equation}
Thus, $\X_3$ can  be identified with a subspace of $B(\X_1,\X_2)$, the space of bilinear continuous forms,  with the $\mathcal X_3$-norm dominating the operator norm, and analogous identification can be performed  for each permutation of $\X_j$.    The class of UMD Banach spaces, characterized by the property that singular integrals, in particular the Hilbert transform, admit a $L^p(\X)$-bounded extension, is the natural setting for   Banach-valued \emph{linear} singular integrals: we refer to  the seminal works of Burkholder \cite{Burk1983} and Bourgain \cite{Bou1983,Bou1986}, and the more recent articles \cite{HytWeis06,HytLac2013,Weis01}.

Under suitable geometric assumptions (referred to as \emph{nontangential RMF property}) on the triple $\X_j$,  which we describe and motivate at the end of the introduction, we prove the full range  of   $L^p(\X_j)$ estimates for trilinear forms of the type
\begin{equation} \label{introform}
\Lambda_m(f_1,f_2,f_3) =\int \displaylimits_{\xi_1+\xi_2+\xi_3=0} m(\xi_1,\xi_2,\xi_3) \big[ \widehat f_1(\xi_1),\widehat f_2(\xi_2), \widehat f_3(\xi_3)\big]\,\d\xi
\end{equation}
 initially defined for triples of smooth, $\X_j$-valued functions $f_j$. Here $m$  is a smooth away from the origin, \emph{trilinear form-valued} symbol satisfying a   version of the Coifman-Meyer smoothness condition based upon the newly introduced notion of $  \mathcal{R}$-\emph{boundedness}  for  families of \emph{multilinear} operators.  
 This concept is an important novelty of the present article, and it appears to be the correct multilinear generalization of the  $\mathcal{R}$-boundedness of families of \emph{linear} operators, which is  the right substitute for uniform boundedness when working with operator-valued \emph{linear} multipliers, see for instance \cite{Weis01}. 

Our randomized condition is obviously satisfied in the simple case $m=\tilde m \mathsf{id}$ for some scalar Coif\-man-Meyer symbol $\tilde m$.
Furthermore, we detail its application to two seemingly unrelated particular cases.
The first is the following bilinear generalization of Weis's theorem from \cite{Weis01}, involving the same $\mathcal{R}$-bounded H\"ormander-Mihlin condition on the operator-valued symbol. In particular, the corollary below applies to the case of $\X$ being the Schatten-von Neumann class $S_p=L^p(\mathcal B(H) , \mathrm{tr})$, see  \cite{PX} for definition and basic properties. To the best of our knowledge, Corollary \ref{introthm1} is the first multilinear multiplier theorem involving noncommutative $L^p$ spaces.

\begin{corollary} \label{introthm1} 
 Let $\X$ be a complex UMD Banach space with the nontangential RMF property and denote by $\mathcal L(\X)$ the space of linear endomorphisms of $\mathcal X$. Let $
 \mathsf{m}:\R^2\to  \mathcal L(\X)$, smooth away from the origin and such that the family of operators
 $$
 \mathcal M=
 \left\{|\xi|^\alpha \partial_{\xi}^\alpha \mathsf{m}(\xi)[\cdot]: \xi \in \R^2\backslash \{0\}, |\alpha|\leq N\right\}$$ is $\mathcal{R}$-bounded in $\mathcal L (\X)$, in the sense of condition \eqref{Rboundsop}.
Then the operator $$
T_{\mathsf m}(f_1,f_2)(x)= \int_{\R^2}\mathsf m(\xi_1,\xi_2) [\widehat f_1(\xi_1)] \widehat f_2 (\xi_2) \e^{-ix(\xi_1+\xi_2)} \,\d\xi_1\d \xi_2
 $$
 initially defined for Schwartz, $\X$-valued functions $f_1$ and Schwartz functions $f_2$, extends to a  bounded bilinear operator
\begin{align*}
&T_{\mathsf m}: L^{p_1}(\R; \X ) \times L^{p_2}(\R ) \to L^{r}(\R; \X ),   \qquad \textstyle \frac{1}{p_1} + \frac{1}{p_2} = \frac 1r, \; 1<p_1,p_2 \leq \infty, \, r<\infty;\\ &T_{\mathsf m}: L^{1}(\R; \X ) \times L^{1}(\R ) \to L^{\frac12,\infty}(\R; \X),
\end{align*}
with operator norms depending only on the $\mathcal R$-bound of $\mathcal M$, and on the   exponents $p_1,p_2$.
\end{corollary}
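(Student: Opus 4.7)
The plan is to deduce Corollary \ref{introthm1} from the operator-valued Coifman-Meyer Theorem \ref{CMT} by duality. Since $\X^*$ is again a UMD space inheriting the nontangential RMF property, the triple $(\X_1,\X_2,\X_3)=(\X,\mathbb{C},\X^*)$, endowed with the contractive trilinear form $\mathsf{id}(x_1,x_2,x_3)=x_2\langle x_1,x_3\rangle$, fits the framework of \eqref{trcontfm}. I would test the $L^r(\R;\X)$-valued output of $T_{\mathsf m}$ against $f_3\in L^{r'}(\R;\X^*)$ in the strong range, and defer the weak endpoint $p_1=p_2=1$ to a restricted-type version of Theorem \ref{CMT}.

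A direct Plancherel computation recasts this dual pairing as an instance of \eqref{introform}: up to the Fourier sign convention,
\[
\int_\R \mathsf{id}\bigl(T_{\mathsf m}(f_1,f_2)(x),1,f_3(x)\bigr)\,\d x=\Lambda_m(f_1,f_2,f_3),
\]
where the trilinear form-valued symbol on the plane $\xi_1+\xi_2+\xi_3=0$ is defined by
\[
m(\xi_1,\xi_2,\xi_3)[x_1,x_2,x_3]:=x_2\,\langle\mathsf m(\xi_1,\xi_2)x_1,x_3\rangle.
\]
Once the $\mathcal R$-bounded Coifman-Meyer condition for $m$ required by Theorem \ref{CMT} is verified, that theorem delivers the desired bounds for $\Lambda_m$, and duality returns the announced strong and weak estimates for $T_{\mathsf m}$.

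The verification of the smoothness hypothesis is the main technical step. Since the $\mathbb{C}$-slot and the $\X^*$-slot enter $m$ only through scalar multiplication and a contractive linear pairing, the randomization defining multilinear $\mathcal R$-boundedness trivializes on those two slots, and the trilinear condition collapses onto the linear $\mathcal R$-bound for $\mathcal M\subset\mathcal L(\X)$, which is exactly the standing hypothesis on $\mathsf m$. The passage between the Euclidean weights $|(\xi_1,\xi_2)|^{|\alpha|}$ of the two-dimensional H\"ormander-Mihlin condition and the symmetric weights $|\xi_1|^{\alpha_1}|\xi_2|^{\alpha_2}|\xi_3|^{\alpha_3}$ of the Coifman-Meyer condition on the plane $\xi_1+\xi_2+\xi_3=0$ is then routine, since any two of the three moduli control the third. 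I expect the routine bookkeeping around the restricted-type endpoint formulation to be the only nontrivial extra care required.
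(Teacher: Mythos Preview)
Your approach is essentially the paper's own: the paper takes $(\X_1,\X_2,\X_3)=(\X,\X',\mathbb C)$ with $m(\xi)[x,x',\lambda]=\lambda\langle\mathsf m(\xi)x,x'\rangle$, which is your choice $(\X,\mathbb C,\X^*)$ up to a permutation of the last two slots, and then invokes Theorem~\ref{CMT} directly. Two small corrections are worth noting. First, your assertion that ``$\X^*$ inherits the nontangential RMF property'' is not something the paper proves or claims; the paper's derivation explicitly \emph{assumes} that both $\X$ and $\X'$ have RMF (and then observes via Lemma~\ref{lemmaRMF2} that this holds in the concrete cases of interest, e.g.\ noncommutative $L^p$). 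You should state this as an assumption rather than a consequence. Second, the target condition \eqref{derivatives} already uses the Euclidean weight $|\xi|^{|\alpha|}$ on the plane $\xi_1+\xi_2+\xi_3=0$, not a product weight $|\xi_1|^{\alpha_1}|\xi_2|^{\alpha_2}|\xi_3|^{\alpha_3}$; since $|\xi|$ on that plane is comparable to $|(\xi_1,\xi_2)|$, no weight-comparison step is needed at all. The reduction of the trilinear $\mathcal R$-bound to the linear one is exactly the content of the paper's Subsection~2.2.1, which also records that the adjoint family $\{T_z'\}$ has the same $\mathcal R$-bound---this is the one fact your ``trivializes on those two slots'' hides.
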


The second particular case  of our main Theorem \ref{CMT} is its restatement for three UMD function lattices  $\X_j$, say, $\X_j=L^{q_j}$ on the same measure space, tied in \eqref{trcontfm} by H\"older's inequality. In this setting, our multilinear $\RR$-boundedness condition on the symbol $m$ has a very transparent interpretation: see \eqref{condLqj} of the next corollary. 

\begin{corollary} \label{REVCOR}
Let $(\mathcal N,\nu)$ be a $\sigma$-finite measure space, and
$
 \textstyle \frac{1}{q_1} + \frac{1}{q_2} + \frac{1}{q_3} =1,$ $1<q_1,q_2,q_3<\infty,
$ be a given H\"older triple.
Let $$
\xi\in \R^3: \xi_1+\xi_2+\xi_3=0 \mapsto m(\xi)=m(\xi)[g_1,g_2,g_3], \qquad g_j \in L^{q_j}(\mathcal N,\nu),\, j=1,2,3, 
$$ be a trilinear form-valued function, smooth away from the origin. Setting
$$
\mathcal M=
 \left\{|\xi|^\alpha \partial_{\xi}^\alpha {m}(\xi)[\cdot,\cdot, \cdot ]:  \xi_1+\xi_2+\xi_3=0, |\alpha|\leq N\right\},$$
 assume that
\begin{equation} \label{condLqj}\begin{split}
&\quad\sum_{j=1}^n \mathsf{m}_j [g_1^j, g_2^j, g_3^j]\\ & \lesssim C_{\mathcal M} \Big\|\Big(\sum_{j=1}^n |g_{\sigma(1)}^j|^2\Big)^{1/2}\Big\|_{L^{q_1}(\nu)}\Big\|\Big(\sum_{j=1}^n |g_{\sigma(2)}^j|^2\Big)^{1/2}\Big\|_{L^{q_2}(\nu)} \Big\|\sup_{1\leq j\leq n} |g_{\sigma(3)}^j| \Big\|_{L^{q_3}(\nu)}
\end{split}
\end{equation}
uniformly over integers $n$, over choices $\{\mathsf{m}_j :j=1,
\ldots n\},\,\mathsf m_j \in   \mathcal M$, and over permutations $\sigma $ of $\{1,2,3\}$. Then, the dual operators $T_m$ to the corresponding form $\Lambda_m$ of \eqref{introform} extend to bounded bilinear operators
\begin{align*}
&  T_{  m}: L^{p_1}(\R; L^{q_1}(\nu)) \times L^{p_2}(\R ; L^{q_2}(\nu)) \to L^{r}(\R; L^{q_3'}(\nu) ),\\   & \textstyle \frac{1}{p_1} + \frac{1}{p_2} = \frac 1r, \; 1<p_1,p_2 \leq \infty, \, r<\infty;\\ & T_{  m}: L^{1}(\R; L^{q_1}(\nu)) \times L^{1}(\R ; L^{q_2}(\nu)) \to L^{\frac12,\infty}(\R; L^{q_3'}(\nu) )
\end{align*}
with operator norm depending only on $C_{\mathcal M}$ and on the   exponents $p_1,p_2$.

\label{introthm2}
\end{corollary}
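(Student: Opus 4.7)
The plan is to deduce Corollary \ref{REVCOR} directly from Theorem \ref{CMT} by choosing the UMD triple $\X_1=L^{q_1}(\nu),\X_2=L^{q_2}(\nu),\X_3=L^{q_3}(\nu)$ tied in \eqref{trcontfm} via the H\"older trilinear form
\[
\mathsf{id}(g_1,g_2,g_3) = \int_{\mathcal N} g_1\, g_2\, g_3 \,\d\nu,
\]
whose continuity bound in the sense of \eqref{trcontfm} is exactly the three-function H\"older inequality on the triple $(q_1,q_2,q_3)$. Each $L^{q_j}(\nu)$ with $1<q_j<\infty$ is a UMD space by Burkholder's theorem, and for reflexive function lattices the nontangential RMF property follows from the strong $L^q$-boundedness of the Hardy-Littlewood maximal operator on the underlying measure space (this reduction will be discussed in the section introducing the RMF condition). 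Once this structural setup is in place, the trilinear form bounds produced by Theorem \ref{CMT} translate into the claimed bilinear operator estimates for $T_m$ via the standard duality $L^r(\R;L^{q_3}(\nu))^\ast = L^{r'}(\R;L^{q_3'}(\nu))$ in the Banach range, and a standard extraction argument at the weak endpoint $p_1=p_2=1$.

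The one step requiring genuine work is to show that the explicit mixed square-function/maximal hypothesis \eqref{condLqj} implies the abstract multilinear $\RR$-boundedness condition \eqref{Rboundsop} demanded by Theorem \ref{CMT} for the family $\mathcal M$. The translation is driven by the Maurey-type equivalence, valid in any $L^q$-function lattice with $1<q<\infty$,
\[
\E_\eps \Big\|\sum_{j=1}^n \eps_j g^j\Big\|_{L^q(\nu)} \sim \Big\|\Big(\sum_{j=1}^n |g^j|^2\Big)^{1/2}\Big\|_{L^q(\nu)},
\]
which converts two of the three Rademacher averages appearing in the abstract randomized bound into the square functions appearing in \eqref{condLqj}. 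The remaining factor is converted into the pointwise supremum of \eqref{condLqj} by a Kahane contraction argument: once one of the three arguments is frozen as a tuple $\{g_{\sigma(3)}^j\}$, the trilinearity allows one to estimate it in operator norm by $\sup_j |g_{\sigma(3)}^j|$, which is the $\X_3$-norm of the natural envelope in the function lattice. Summation over the three permutations $\sigma$ accounts for all possible asymmetric assignments of the sup role among the three inputs.

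The main obstacle is precisely this matching step: the abstract multilinear $\RR$-boundedness produced by Theorem \ref{CMT} is inherently asymmetric, since the trilinear form cannot tolerate three independent Rademacher randomizations without introducing a loss. Expecting three simultaneous square functions would overstate the hypothesis; the correct formulation, reflected in \eqref{condLqj}, is two Rademacher sums plus one supremum, distributed across the three inputs via $\sigma$. Once this correspondence between \eqref{condLqj} and \eqref{Rboundsop} is established, all exponent ranges in Corollary \ref{REVCOR}, including the weak-type endpoint $L^1\times L^1\to L^{\frac12,\infty}$, follow by a direct appeal to Theorem \ref{CMT}, with operator norms controlled in terms of $C_{\mathcal M}$ and $p_1,p_2$ alone.
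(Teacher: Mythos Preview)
Your approach is correct and matches the paper's: specialize Theorem \ref{CMT} to $\X_j=L^{q_j}(\nu)$, note UMD and RMF hold (the paper invokes Lemma \ref{lemmaRMF2}(3), i.e.\ Rubio de Francia's lattice maximal theorem, rather than ``Hardy--Littlewood on the underlying space''), and then identify condition \eqref{condLqj} with the abstract multilinear $\RR$-bound \eqref{derivatives} via the lattice computations of Paragraph \ref{sslattices}. Two small corrections: the condition you need to verify is \eqref{derivatives} (built from \eqref{Rlambda2}), not \eqref{Rboundsop}, which is the linear $\RR$-bound; and the identification of $\RR_{\X_1,\X_2}(\{g_{\sigma(3)}^j\})$ with $\|\sup_j|g_{\sigma(3)}^j|\|_{L^{q_3}}$ is not a Kahane contraction argument but the lattice duality $\ell^2\times\ell^2\hookrightarrow\ell^1$ against $\ell^\infty$ under H\"older, as spelled out in \eqref{Rclarify}--\eqref{Rclarify2}.
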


Bounds of the  type \eqref{condLqj} are quite familiar in multilinear harmonic analysis. They are classically known to hold for families $\mathcal M$ of (uniformly) Coifman-Meyer symbols, as well as for families $\mathcal M$ of bilinear Hilbert transforms \cite{LT1, LT2}, as shown by Silva in \cite{Silva12}.
 In \cite{Silva12}, Silva proved $L^p$ estimates  for a tensor product of  a bilinear Hilbert transform with  a Coifman-Meyer multiplier, positively answering a question by Muscalu et.\ al.\ in \cite{MPTT1}, where analogous results for the less singular bi-parameter  Coifman-Meyer multipliers had been obtained. Applying Corollary \ref{introthm2}, we can extend the results of both \cite{MPTT1} and \cite{Silva12}  to mixed-norm spaces, when the inner exponents lie in the Banach triangle.  

 \begin{corollary} \label{BHT}
 Let $m=m(\xi,\eta)$ be a complex function defined on  $$\{(\xi,\eta)\in \R^3\times \R^3: \xi_1+\xi_2+\xi_3=\eta_1+\eta_2+\eta_3=0\},$$  smooth away from the origin and satisfying
 \begin{equation} \label{mpest}
 \sup_{\substack{\xi_1+\xi_2+\xi_3=0\\ \eta_1+\eta_2+\eta_3=0} 
 }
 \dist(\xi,\Gamma_0)^{|\alpha|}  \dist(\eta,\Gamma_1)^{|\beta|} \big|\partial_{\xi}^\alpha \partial^\beta_\eta m(\xi,\eta)\big| \leq K, \qquad |\alpha|,|\beta| \leq N\end{equation}
 where $\Gamma_0=\{0\}$, $\Gamma_1$ is a non-degenerate  proper subspace of $\{\eta_1+\eta_2+\eta_3=0\}$. Then, the dual operators $T$ to the trilinear form  
 $$
 \Lambda_{m} (f_1,f_2,f_3) =\int\displaylimits_{\substack{\xi_1+\xi_2+\xi_3=0\\ \eta_1+\eta_2+\eta_3=0} 
 } m(\xi, \eta) \widehat f_1(\xi_1,\eta_1)\widehat f_2(\xi_2,\eta_2)\widehat f_3(\xi_3,\eta_3)\, \d \xi \d \eta$$
initially defined for $f_j\in \mathcal S (\R^2)$, extend to  bounded linear operators
\begin{align*}
&T: L^{p_1}(\R; L^{q_1}(\R)) \times  L^{p_2}(\R; L^{q_2}(\R)) \to  L^{r}(\R; L^{q_3'}(\R)),  
\\ & \textstyle \frac{1}{p_1} + \frac{1}{p_2} = \frac 1r, \; 1<p_1,p_2 \leq \infty, \, r<\infty; \\& T: L^{1}(\R; L^{q_1}(\R)) \times  L^{1}(\R; L^{q_2}(\R)) \to  L^{\frac12,\infty} (\R; L^{q_3'}(\R)),
\end{align*}
for all tuples
$
  1<q_1,q_2,q_3< \infty, \frac{1}{q_1}+ \frac{1}{q_2}+\frac{1}{q_3}=1,$ with operator norm depending on the exponent tuples and on the constant $K$ appearing in \eqref{mpest}.
\end{corollary}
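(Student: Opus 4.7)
The plan is to apply Corollary \ref{REVCOR} with measure space $(\mathcal N,\nu)=(\R_y,\mathrm{Leb})$ and UMD lattices $\X_j=L^{q_j}(\R_y)$, where $\R_y$ is the second factor in $\R^2=\R_x\times\R_y$. View a Schwartz function $f_j:\R^2\to\mathbb C$ as a Schwartz map $f_j:\R_x\to L^{q_j}(\R_y)$, and define, for $\xi$ on the plane $\{\xi_1+\xi_2+\xi_3=0\}$, the trilinear form
\[
\mathsf m(\xi)[g_1,g_2,g_3]:=\int \displaylimits_{\eta_1+\eta_2+\eta_3=0} m(\xi,\eta)\, \widehat g_1(\eta_1) \widehat g_2(\eta_2) \widehat g_3(\eta_3)\,\d\eta,\qquad g_i \in L^{q_i}(\R_y).
\]
A direct Fubini/Plancherel computation identifies $\Lambda_m = \Lambda_{\mathsf m}$ in the sense of \eqref{introform}. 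For each $\xi$, $\mathsf m(\xi)$ is a BHT-type trilinear form with singularity along $\Gamma_1$, bounded on $L^{q_1}\times L^{q_2}\times L^{q_3}$ in the full Banach range by Lacey-Thiele \cite{LT1,LT2}; moreover, the $\xi$-half of \eqref{mpest} translates into the required Coifman-Meyer smoothness of $\xi\mapsto\mathsf m(\xi)$ away from the origin.

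To invoke Corollary \ref{REVCOR}, it remains to verify the multilinear $\mathcal R$-boundedness condition \eqref{condLqj} for the family
\[
\mathcal M=\left\{|\xi|^{|\alpha|}\partial_\xi^\alpha \mathsf m(\xi)[\cdot,\cdot,\cdot]\st \xi_1+\xi_2+\xi_3=0,\,|\alpha|\leq N\right\}.
\]
Each element of $\mathcal M$ is itself a BHT-type trilinear form on $L^{q_i}(\R_y)$ whose defining $\eta$-symbol satisfies bounds of the form \eqref{mpest} with a constant depending only on $K$ and $N$. As pointed out in the paragraph following Corollary \ref{REVCOR}, such uniform BHT families obey exactly the condition \eqref{condLqj} with constant $C_{\mathcal M}$ controlled by $K$: this is the content of Silva's work \cite{Silva12}, which upgrades the Lacey-Thiele $L^{q_j}$ bounds to vector-valued estimates featuring $\ell^2$-valued square functions on two entries and an $\ell^\infty$-valued maximal function on the third, for every permutation $\sigma\in S_3$. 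This verification is the main (and essentially only nontrivial) obstacle of the proof.

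With \eqref{condLqj} in hand, Corollary \ref{REVCOR} applies verbatim and yields, for all H\"older triples $(q_1,q_2,q_3)$ with $1<q_i<\infty$ and $\frac1{q_1}+\frac1{q_2}+\frac1{q_3}=1$ and all admissible tuples $(p_1,p_2,r)$, the mixed-norm $L^{p_1}_x(L^{q_1}_y)\times L^{p_2}_x(L^{q_2}_y)\to L^{r}_x(L^{q_3'}_y)$ bounds for the dual operators $T$, along with the quasi-Banach weak-type endpoint at $p_1=p_2=1$.
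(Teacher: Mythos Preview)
Your proof is correct and follows essentially the same route as the paper: define the form-valued multiplier $\mathsf m(\xi)$ by freezing $\xi$ in the bi-parameter symbol, verify the multilinear $\mathcal R$-boundedness condition \eqref{condLqj} for the family $\mathcal M$ by invoking Silva's vector-valued BHT bounds \cite[Theorem 1.7]{Silva12}, and then apply Corollary~\ref{REVCOR}. The paper's write-up differs only in that it unpacks the application of Corollary~\ref{REVCOR} by reducing via mixed-norm interpolation to the weak $(1,1,-1)$ endpoint and phrasing that endpoint in dual (restricted weak-type) form, but this is just a presentation choice rather than a different argument.
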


Note that the proof of \cite{MPTT1} relies on heavy product-BMO theory and Journe's lemma-type arguments. The article \cite{MPTT06}, by the same authors, extends and simplifies the approach of \cite{MPTT1}. The product theory content of \cite{MPTT06} and   \cite{Silva12} is essentially restricted to the $L^p$ boundedness of the double square function and of the strong maximal function. Our approach to Corollary \ref{BHT}, relying on the iteration of vector valued, multilinear type bounds, completely avoids product theory, thus allowing  for the extremal tuple $(1,1,1/2)$ in one of the variables. Moreover, the $\Gamma_1=\{0\}$ case of Corollary \ref{BHT} implies via standard arguments, see for instance \cite{MPTT1,MuscSchlII}, the aforementioned Leibniz rules Corollary \ref{LR}.


\subsection{One and multi-parameter $T(1)$-theorems}
A second significant part of the  article is devoted to deducing several Banach-valued $T(1)$ theorems, in one and multiple parameters, within the same outer measure theory setting. The strength of $T(1)$ theorems lies in their power of characterizing boundedness of operators using a simple set of conditions. In the Banach-valued setting, perhaps the most general such type of result is the   nonhomogeneous, local $T(b)$ theorem from \cite{Hyt2014}. However, in the global, Euclidean $T(1)$ case, the same generality is reached by \cite[Corollary 1.6]{HytHann14}, to which we refer for comparison. Both the  assumptions and the proofs  of the modern  Banach-valued $T(1)$ theorems are by now rather customary in form.  Restricting for simplicity  to the paraproduct free case, the former involve a randomized version of the standard kernel assumptions and of the weak boundedness property, i.e.\ testing over indicator functions of intervals. The latter are usually based on some realization of Hyt\"onen's  dyadic representation theorem from \cite{HytA2}, of which  \cite{HytHann14} provides an operator-valued version.

In Section \ref{T1secstat} of this article, we take on a different approach to operator-valued $T(1)$-theorems, completely replacing  the (randomized versions of the) kernel assumptions with conditions of testing type. Sacrificing a bit of rigor, we now provide an example of our results. We start from a bilinear form $\Lambda(f_1,f_2)$ initially defined on pairs of $\X_j$-valued Schwartz functions $f_j$.
Fixing a mean zero Schwartz function $\phi$ which is good for Calder\'on's reproducing formula, we write $\phi_{x,s}$ for the $L^1$-normalized $s$-dilation of $\phi$ centered at $x\in \R$ and  construct the family of bilinear forms
\begin{equation} 
\label{introeqA}\begin{split} &
(\xi,\eta) \in \X_1\times \X_2\mapsto A(x,s,y,t) \Lambda(\phi_{x,s}\xi,\phi_{y,t}\eta), \\ & A(x,s,y,t):=\frac{\max(s,t,|x-y|)^2}{\min(s,t)}. \end{split}
\end{equation}
Our conclusion, in Theorem \ref{onepara}, is that $\Lambda$ extends to a bounded bilinear form on $L^p(\X_1)\times L^{p'}(\X_2)$ if the above family of bilinear forms is randomized bounded, in the sense specified in  Section \ref{SecRbound}. This should be thought of as a $T(1)=0$-type theorem, and in fact, the randomized testing condition of our result can be obtained from the standard $T(1)$ assumptions involving the kernel of the dual operator to $\Lambda$ in the paraproduct free case.  

 Theorem \ref{onepara}, albeit restricted to  the less general Euclidean setting,  has the advantage of not requiring any \emph{a-priori} assumption on the distributional kernel of $\Lambda$ (or $T$). Besides the wider formal generality, we believe that   checking testing-type condition like \eqref{newwbp} rather than kernel assumptions is closer to being computationally feasible and thus more realistic. 
  We also remark that our methods do not involve dyadic representation theorems of any sort, unlike, for instance, the approach of \cite{HytHann14} and references therein. This can also be regarded as computationally advantageous: the trade-off of the extremely neat probabilistic expression of the dyadic representation theorem of \cite{HytA2} is the rather large parameter space involved.

A further nice feature of Theorem \ref{onepara} is that, under the additional necessary assumption of   Pisier's property $(\alpha)$ on  $\X_1,\X_2$, it can be iterated to obtain a mixed norm, multi-parameter operator-valued $T(1)=0$ theorem, Theorem \ref{bipara}. To the best of our knowledge, this is the first $T(1)$ type result in the multi-parameter operator-valued setting. We also formulate  $T(1)\in$ BMO type results in the same spirit, see Theorems \ref{FullT1thm} and \ref{biparafull}. 

In particular, Theorem \ref{biparafull} recovers  the multi-parameter, scalar valued $T(1)$-theo\-rems of \cite{PottVilla11}, replacing the \emph{a-priori} full and mixed kernel assumptions with testing-type conditions. Even  when compared with other multi-parameter $T(1)$ type theorems available in the scalar case, our result has the advantage that in however many parameters, the testing condition is always of   simple tensor product form, see Theorem \ref{bipara}. In contrast, for instance, in the formulation of \cite{PottVilla11}, the total number of mixed type $T(1)$ conditions that are needed grows exponentially with   the number of parameters. 
To give the readers a better idea of how the testing condition is iterated, we state here the following scalar-valued bi-parameter $T(1)$ theorem, which is a special case of Theorem \ref{bipara}.
\begin{corollary}
Let $\Lambda:\mathcal{S}(\R)\otimes\mathcal{S}(\R)\times\mathcal{S}(\R)\otimes\mathcal{S}(\R)\rightarrow\mathbb{C}$ be a bilinear form and $\phi $ be a   function as above. Assume that, with $A(\cdot,\ldots,\cdot)$ as in \eqref{introeqA},
\[
\sup_{\substack{x_1,x_2,y_1,y_2\in\R\\s_1,s_2,t_1,t_2\in\R_+}}\Big|A(x_1,s_1,y_1,t_1) A(x_2,s_2,y_2,t_2)\Lambda(\phi_{x_1,s_1}\otimes\phi_{x_2,s_2},\phi_{y_1,t_1}\otimes\phi_{y_2,t_2})\Big|<\infty.
\]
Then, 
\[
|\Lambda(f,g)|\leq C_{p,q}\|f\|_{L^p(\R;L^q(\R))}\|g\|_{L^{p'}(\R;L^{q'}(\R))} \qquad \forall 1<p,q<\infty.
\]
\end{corollary}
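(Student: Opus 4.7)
The plan is to deduce this bi-parameter statement from two successive invocations of the one-parameter operator-valued $T(1)=0$ Theorem \ref{onepara}, taking advantage of the fact that $L^q(\R)$ and $L^{q'}(\R)$ are complex UMD function lattices with the nontangential RMF property and Pisier's property~$(\alpha)$.

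Step one: for each fixed first-variable quadruple $\mathbf p_1=(x_1,s_1,y_1,t_1)$ I consider the scalar bilinear form
$$
\Lambda_{\mathbf p_1}(g,h) := A(x_1,s_1,y_1,t_1)\,\Lambda(\phi_{x_1,s_1}\otimes g,\,\phi_{y_1,t_1}\otimes h),\qquad g,h\in\mathcal S(\R).
$$
The corollary's hypothesis is precisely the assertion that the associated Calder\'on-type testing family
$$
\bigl\{A(x_2,s_2,y_2,t_2)\,\Lambda_{\mathbf p_1}(\phi_{x_2,s_2},\phi_{y_2,t_2}):x_2,y_2\in\R,\,s_2,t_2\in\R_+\bigr\}
$$
is uniformly bounded, with a bound independent of $\mathbf p_1$. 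In the scalar situation $\mathcal X_j=\mathbb{C}$ randomized boundedness reduces to uniform boundedness of a family of complex numbers, so Theorem \ref{onepara} yields
$$
|\Lambda_{\mathbf p_1}(g,h)|\leq C_q\|g\|_{L^q(\R)}\|h\|_{L^{q'}(\R)}
$$
with $C_q$ independent of $\mathbf p_1$.

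Step two: I reinterpret $\Lambda$ as a bilinear form $\widetilde\Lambda:\mathcal S(\R;L^q(\R))\times \mathcal S(\R;L^{q'}(\R))\to \mathbb C$ by viewing $f=f(x_1,x_2)$ as the $L^q(\R_{x_2})$-valued function $x_1\mapsto f(x_1,\cdot)$, and apply Theorem \ref{onepara} with $\mathcal X_1=L^q(\R),\mathcal X_2=L^{q'}(\R)$. The hypothesis to be verified is the randomized boundedness, in the sense of Section \ref{SecRbound}, of the family
$$
\bigl\{A(x_1,s_1,y_1,t_1)\,\widetilde\Lambda(\phi_{x_1,s_1}\,\xi,\,\phi_{y_1,t_1}\,\eta):\mathbf p_1\in\R^2\times\R_+^2,\,\xi\in L^q,\,\eta\in L^{q'}\bigr\}.
$$
Step one delivers uniform boundedness of this family in the operator norm of bilinear forms on $L^q\times L^{q'}$. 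For function lattices the required randomized condition takes the square-function/maximal-function shape of \eqref{condLqj}, and the main obstacle is the passage from uniform to randomized boundedness. This is precisely where property~$(\alpha)$ enters: running independent Rademacher averages over the parameter index $\mathbf p_1$ and over the underlying $\R_{x_2}$ variable, and combining with Khintchine--Kahane and Fubini, the parameter-uniform bound produced in Step one gets promoted to the $\mathcal R$-bounded form demanded by Theorem \ref{onepara}. Once that verification is complete, the conclusion of Theorem \ref{onepara} reads
$$
|\widetilde\Lambda(F,G)|\leq C_{p,q}\|F\|_{L^p(\R;L^q(\R))}\|G\|_{L^{p'}(\R;L^{q'}(\R))},
$$
which is the claim of the corollary. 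The only non-mechanical point is the property~$(\alpha)$-based transfer from uniform to randomized boundedness, which is the heart of the iteration and of the proof of Theorem \ref{bipara}.
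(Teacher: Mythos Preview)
Your overall plan—iterate Theorem \ref{onepara} twice and invoke property $(\alpha)$ to close the loop—is exactly the scheme the paper uses to prove Theorem \ref{bipara}, of which this corollary is the special case $\X_1=\X_2=\mathbb C$. However, the role you assign to Step one is mistaken and leaves a genuine gap.

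Applying Theorem \ref{onepara} separately to each scalar form $\Lambda_{\mathbf p_1}$ yields only a \emph{uniform} operator-norm bound on the family $\{\Lambda_{\mathbf p_1}\}\subset B(L^q,L^{q'})$, and uniform boundedness does not imply $\mathcal R$-boundedness, with or without property $(\alpha)$. Property $(\alpha)$, in the form of Lemma \ref{lemmaalpha}, relates $\mathcal R$-bounds on $\X_1\times\X_2$ to $\mathcal R$-bounds on $\mathrm{Rad}(\X_1)\times\mathrm{Rad}(\X_2)$; it never manufactures an $\mathcal R$-bound from a supremum of operator norms. So the sentence ``the parameter-uniform bound produced in Step one gets promoted'' describes something that cannot happen, and Step one as written produces nothing usable.

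The paper's mechanism (Lemma \ref{propertyalpha}) runs differently. One first applies Theorem \ref{onepara} in the \emph{outer} variable with $\X_1=L^q(\R)$, $\X_2=L^{q'}(\R)$; this reduces the problem to estimating $\mathcal R_{L^q,L^{q'}}(\{Q^1_{z_1,w_1}\})$. To bound this $\mathcal R$-bound one fixes a finite subfamily $\Lambda_1,\dots,\Lambda_N$ realizing it, forms the \emph{single} Rademacher-summed form $\boldsymbol\Lambda(\sum_j r_j f_j,\sum_j r_j g_j):=\sum_j\Lambda_j(f_j,g_j)$, identifies $\mathrm{Rad}(L^q(\R))\simeq L^q(\R;\mathrm{Rad}(\mathbb C))$ by Fubini, and applies Theorem \ref{onepara} \emph{again}, now in the inner variable, to $\boldsymbol\Lambda$ with target spaces $\mathrm{Rad}(\mathbb C)$. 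Only at this point does property $(\alpha)$ enter, collapsing the resulting $\mathcal R$-bound on $\mathrm{Rad}(\mathbb C)\times\mathrm{Rad}(\mathbb C)$ back to the scalar supremum assumed in the corollary. The inner invocation of Theorem \ref{onepara} is thus applied once to a vector-valued form, not pointwise to the scalar forms $\Lambda_{\mathbf p_1}$.

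Two side remarks: the nontangential RMF property is irrelevant here, since Theorem \ref{onepara} requires only UMD; and the reference to \eqref{condLqj} points to a trilinear condition from the Coifman--Meyer theory that does not match the bilinear $\mathcal R$-bound \eqref{newwbp} actually needed.
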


\subsection*{The UMD and RMF assumptions} \label{Ssintroass} Before ending the introduction, we return to our UMD and RMF assumptions on the triple of spaces $\X_j$ and the identification \eqref{trcontfm} in the main result, Theorem \ref{CMT}. To motivate them, we momentarily break the symmetry, which can be  restored by applying the reasoning below to all permutations of $\X_j$, and take     $f_3$  to be a $\X_3$-valued function with frequency support near the origin. Then, by frequency support considerations,
 $$
 (f_1,f_2) \mapsto \Lambda_m(f_1,f_2,f_3) 
 $$
 is essentially a bilinear singular integral form, and it is thus necessary to assume that $\X_1,\X_2$  are UMD spaces in order to have $L^p$-bounds of any type.  To tackle the $\X_3$ component, it is natural to look for some control on the maximal averages of the $\X_3$-valued function  identified by \eqref{trcontfm} with a \emph{bilinear form-valued function}. As usual in Banach-valued theory, this control must be on randomized bounds rather than uniform operator bounds, see Section \ref{SecRbound}. We formulate this requirement in terms of a geometric property of $\X_3$ and $\mathsf{id}$, independent of UMD, which we have referred to as \emph{nontangential Rademacher maximal function} property, RMF for short.  
 
 Our RMF property is closely related to the dyadic RMF property first introduced in  \cite{HytMcPor08} and further studied in \cite{Kemp11,Kemp13}. A detailed investigation of the nontangential RMF, and a comparison with the dyadic version, is given in Section \ref{SecPP}. Here, we mention again that our setting is general enough to include both settings of  UMD lattices and of noncommutative $L^p$ spaces. The RMF property in the latter case is proved by relying upon a noncommutative Doob's inequality due to Mei \cite{Mei07}.

\subsection*{Structure} Section \ref{SecRbound} contains   our formulation  of the notion of $\mathcal R$-boundedness for families of bilinear and trilinear forms, which will be an integral part of the assumptions in our main results. In Section 3, we introduce the operator-valued model paraproduct forms, whose   $L^p$-bounds are stated in Proposition \ref{thmmodel}, and proved in Section \ref{Secpfthmmodel}. In the same section, we also present the notion of nontangential RMF and establish its scope: the proofs of the lemmata are postponed to Section \ref{ssprfrmf}. Section \ref{SecCMT} contains the operator-valued Coifman-Meyer theorem, which is reduced to Proposition \ref{thmmodel} in Section \ref{Secdec}, and the derivation of Corollaries \ref{introthm1} to \ref{BHT}. The $T(1)$ results are formulated in Section \ref{T1secstat} and proved in Section \ref{SecT1pf}. In Section \ref{SecRM}, we introduce the randomized norms whose concrete versions will be used in the construction of the outer $L^p$-spaces, which is in turn performed in Section \ref{SecOM}. In Sections \ref{SecLCET} and \ref{SecRMF} we state and prove the vector-valued Carleson embedding theorems.   
\subsection*{Acknowledgements}
The authors are grateful to Michael Lacey, Jill Pipher, Christoph Thiele and Sergei Treil for  providing useful feedback on an early version of the manuscript.
 The authors also thank   Mikko Kemppainen for his feedback on the relationship between dyadic and nontangential Rademacher maximal function.  Finally, the authors want to express their gratitude to the anonymous referee for the valuable suggestions which considerably improved the exposition. 

\subsection*{Note added in proof}
After the first version of this article was completed in June 2015,   vector valued estimates and mixed-norm bounds for  multiparameter singular integrals, in the same spirit of our Corollaries \ref{REVCOR} and \ref{BHT},  have been obtained in the preprint \cite{BenMusc15}, which has since appeared in print in \cite{BMfinal}.  Interestingly, the results of \cite{BenMusc15,BMfinal} are obtained by completely different methods not involving UMD spaces.

\section{$\mathcal{R}$-boundedness}  \label{SecRbound}
This section contains  our formulation  of the notions of $\mathcal R$-boundedness for families of operators between Banach spaces. In the context of \emph{linear} operators, this is a well established concept, see \cite{KunstWeis04,Weis01} and the numerous references therein. Due to the additional need for symmetry in some of the arguments concerning our multilinear extension of this notion, we prefer to work with bilinear or trilinear forms instead.

 Without further mention, we consider Banach spaces over the complex scalar field and we denote by $B(\X_1,\X_2)$ the Banach space of complex valued bilinear forms on the Banach spaces $\X_1,\X_2$. A similar notation is adopted for    trilinear forms.
We will represent as  $\{r_n:n \in \mathbb N\}$   the standard Rademacher sequence on the probability space $[0,1)$ with Lebesgue measure.  When no confusion arises about which probability space is being considered, we use the notation $\mathbb E$ for expected value.
For a Banach space $\mathcal X$, we define $ \mathrm{Rad}(\X)$ to be the closure of $ \mathrm{span}\{r_jx_j: x_j\in \X, j=1,2,\ldots,\}\subset L^2([0,1), 
\X)$ with respect to the norm
$$
\Big\|\sum_{j=1}^N r_j x_j\Big\|_{ \mathrm{Rad}(\X)} := \left(\ \E_\omega \Big\| \sum_{j=1}^N r_j(\omega)   x_j \Big\|_{\X}^2 
 \right)^{\frac12}.
$$
Notice that, by Kahane-Khintchine inequality for randomized sums, we can replace the exponent $2$ in the above definition by any $0<p<\infty$ and obtain an equivalent (quasi)-norm.
\subsection{$\mathcal R$-boundedness of bilinear forms}
Let $\X_1, \X_2$ be Banach spaces and $Z$, for now, an index set. We define the $\mathcal R$-bound of a  family of bilinear forms $\{\Lambda_{z}:z \in Z\}\subset B(\X_1, \X_2)$  as\begin{equation}
\label{rbound-ab}\begin{split}
&\quad \mathcal R_{\X_1,\X_2}( \{\Lambda_z \}) \\& := \textrm{the least }\, C>0\, \textrm{ s.t.}\;\Big| \sum_{j=1}^n \Lambda_{z_j}(x_1^j,x_2^j) \Big|\leq C \prod_{k=1,2}\Big( \E_\omega \Big\| \sum_{j=1}^N r_j(\omega)   x^j_k \Big\|_{\mathcal X_k}^2\Big)^{\frac12}  \end{split} \end{equation}
for all $N$ and all choices $z_j \in Z, x^k_j \in \X_k,$ $k=1,2$, $j=1,\ldots, N$. 
In other words, defining the family of bilinear forms indexed by $\cic z \in Z^N$, $N \in \mathbb N$
$$
\cic z \in Z^N \mapsto \Lambda_{\cic z} \in B(\mathrm{Rad} (\X_1) ,\mathrm{Rad} (\mathcal X_2)),\qquad  \Lambda_{\cic z}\Big(\sum_{j=1}^N r_j    x_1^j,  \sum_{j=1}^N r_j   x_2^j \Big) := \sum_{j=1}^N \Lambda_{z_j}(x_1^j,x_2^j)
$$
we have
\begin{equation}
\label{rbound-def}
\mathcal R_{\X_1,\X_2}( \{\Lambda_z: z \in Z \}) := \sup_{N \in \mathbb N} \sup_{\cic z \in Z^N } \| \Lambda_{\cic z}\|_{B(\mathrm{Rad} (\X_1) , \mathrm{Rad} (\mathcal X_2))}.
\end{equation}

An additional geometric property of the Banach spaces $\X_1,\X_2$ is often needed when dealing with iterated randomization arguments. 
  We say that a Banach space $\X$ has Pisier's property $(\alpha)$ if
$$
\E_\omega \E_{\omega'} \Big\| \sum_{j,k=1}^N r_j(\omega) r_k(\omega') \alpha_{jk} x^{jk}\Big\|_{\X} \leq C  \E_\omega \E_{\omega'} \Big\| \sum_{j,k=1}^N r_j(\omega) r_k(\omega')   x^{jk}\Big\|_{\X} 
$$
for all integers $N$, all choices of unimodular scalars $\alpha_{jk}$ and vectors $x^{jk}$. It turns out (\cite{Clem00,HytWeis08}) that  $(\alpha)$ is equivalent to the following property.
\begin{lemma}\label{lemmaalpha} Let $\X_1, \X_2$  be  Banach spaces with property $(\alpha)$. Then for all families $\{\Lambda_z: z\in Z\}\subset B(\X_1,\X_2)$,
$$
\mathcal{R}_{\mathrm{Rad}(\X_1),\mathrm{Rad}(\X_2)} \big( \left\{ \Lambda_{\cic z}: \cic z\in Z^N , N \in \mathbb N\right\}  \big) = \mathcal{R}_{ \X_1,\X_2} \big( \left\{ \Lambda_{ z}: z\in Z\right\}  \big).
$$ 
\end{lemma}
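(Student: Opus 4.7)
\medskip

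\noindent\textbf{Proof plan for Lemma \ref{lemmaalpha}.} The inequality $\mathcal{R}_{\X_1,\X_2}(\{\Lambda_z\}) \leq \mathcal{R}_{\mathrm{Rad}(\X_1),\mathrm{Rad}(\X_2)}(\{\Lambda_{\cic z}\})$ is trivial: specialize to $N=1$ in \eqref{rbound-def}, in which case $\Lambda_{\cic z}(r_1 x_1, r_1 x_2)=\Lambda_z(x_1,x_2)$ and the $\mathrm{Rad}(\X_i)$ norms collapse to the $\X_i$ norms. The substantive direction is the reverse, so I would fix $M$, $N$, tuples $\cic z^1,\dots,\cic z^M \in Z^N$ with $\cic z^k=(z_1^k,\dots,z_N^k)$, and elements $X^k=\sum_{j=1}^N r_j x_1^{k,j}\in\mathrm{Rad}(\X_1)$, $Y^k=\sum_{j=1}^N r_j x_2^{k,j}\in\mathrm{Rad}(\X_2)$, and bound $\sum_{k=1}^M \Lambda_{\cic z^k}(X^k,Y^k)$.

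By definition of $\Lambda_{\cic z}$, this sum reorganizes as the doubly indexed sum $\sum_{k=1}^M\sum_{j=1}^N \Lambda_{z_j^k}(x_1^{k,j},x_2^{k,j})$, which is in the natural form to apply the definition \eqref{rbound-ab} of $\mathcal R_{\X_1,\X_2}(\{\Lambda_z\})$ with a single Rademacher system $\{\epsilon_{k,j}\}$ indexed by pairs $(k,j)$. This yields the bound
\[
\Big|\sum_{k=1}^M \Lambda_{\cic z^k}(X^k,Y^k)\Big| \leq \mathcal R_{\X_1,\X_2}(\{\Lambda_z\})\prod_{i=1,2}\Big(\E_\epsilon\Big\|\sum_{k,j}\epsilon_{k,j}x_i^{k,j}\Big\|_{\X_i}^2\Big)^{1/2}.
\]
To conclude, I need to identify each factor on the right-hand side with the corresponding $\mathrm{Rad}(\mathrm{Rad}(\X_i))$-norm $\big(\E_{\omega'}\big\|\sum_k r_k(\omega') X^k\big\|_{\mathrm{Rad}(\X_i)}^2\big)^{1/2}=\big(\E_{\omega'}\E_\omega\big\|\sum_{k,j}r_k(\omega')r_j(\omega)x_i^{k,j}\big\|_{\X_i}^2\big)^{1/2}$.

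This last step is exactly where property $(\alpha)$ of $\X_i$ enters, and it is the only real obstacle. The equivalence
\[
\E_\epsilon\Big\|\sum_{k,j}\epsilon_{k,j}x_i^{k,j}\Big\|_{\X_i}\;\approx\;\E_{\omega'}\E_\omega\Big\|\sum_{k,j}r_k(\omega')r_j(\omega)x_i^{k,j}\Big\|_{\X_i}
\]
is obtained by a standard Fubini-and-randomization trick: conditioning on $(\omega,\omega')$, the system $(r_k(\omega')r_j(\omega))_{k,j}$ has the same $\pm 1$-values as a single Rademacher family indexed by pairs, and property $(\alpha)$ (after inserting independent signs $\epsilon_{k,j}$ and averaging) lets one freely pass between the doubly-indexed system and its product of two independent Rademacher copies. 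Combined with Kahane--Khintchine, which allows the $L^2$ averages in the definition \eqref{rbound-ab} to be replaced by $L^1$ averages for the purpose of this comparison, this supplies the desired identification and completes the proof.
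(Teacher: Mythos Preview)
Your proof is correct. Note, however, that the paper does not supply its own proof of this lemma: it is stated as a known characterization of property~$(\alpha)$, with attribution to \cite{Clem00,HytWeis08}. Your argument---rewriting $\sum_k \Lambda_{\cic z^k}(X^k,Y^k)$ as the doubly-indexed sum $\sum_{k,j}\Lambda_{z_j^k}(x_1^{k,j},x_2^{k,j})$, applying the $\mathcal R_{\X_1,\X_2}$-bound with a single Rademacher family $(\epsilon_{k,j})$, and then using property~$(\alpha)$ (together with Kahane--Khintchine) to pass between $(\epsilon_{k,j})$ and the product system $(r_k(\omega')r_j(\omega))$---is exactly the standard argument found in those references. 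One minor remark: the equality in the lemma should be read as a two-sided comparability with constants depending on the property~$(\alpha)$ constants of $\X_1,\X_2$; your proof delivers precisely this.
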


\label{ss11}
We will usually consider the $\mathcal R$-bound of a family of vectors from $\X_j$, $j=1,2,3$ when the three complex Banach spaces are identified by \eqref{trcontfm}.  The $\mathcal R$-bound of a family $ \cic{x}_3 \subset \mathcal 
\X_3$ is then naturally defined as\begin{equation}
\label{rbound}
\mathcal R_{\X_1,\X_2}( \cic{x}_3 ) :=  \mathcal R_{\X_1,\X_2}\big(  \{(x_1,x_2) \mapsto \mathsf{id}(x_1,x_2, x_3):  x_3 \in  \cic{x}_3   \}\big).
\end{equation}
 \begin{remark} \label{remtype}
By using orthogonality of the Rademachers, we learn that, if $\X_1$ and $\mathcal X_2$ are both Hilbert spaces, then $\mathcal R(\cic{x}_3)= \sup_{x_3 \in \cic{x}_3} \|x_3\|_{\mathcal B (\X_1, \mathcal X_2)}$. More generally, it is a result of Pisier, appearing in \cite{ArendtBu02}, that  $\mathcal R(\cic{x}_3)$ is comparable to the supremum of the operator norms  if and only if  $\X_1$, $\X_2$ both have cotype 2.   \end{remark}

\subsection{$\mathcal R$-boundedness of trilinear forms}
The most natural way to formulate our H\"or\-man\-der-Mihlin assumption  on operator-valued multilinear multiplier forms   involves a further notion of $\mathcal{R}$-boundedness for families of   trilinear continuous forms in $\B( \X_1,\X_2,\X_3) $, which we now introduce.  The definition below is actually a generalization of $\ell^2-\ell^2-\ell^\infty$-valued H\"older-type bounds for families of  forms defined on a triple of UMD lattices. 

 For a family $\{\Lambda_z:z \in Z\}$ of multilinear forms in $\B( \X_1,\X_2,\mathcal X_3) $, we define  
\begin{equation} \label{Rboundsform} \begin{split}  \RR_{\mathcal X_3|\X_1,\X_2}^{}(\{\Lambda_z\}) & := \textrm{the least }\, C>0\, \textrm{ s.t.} \sum_{j=1}^N {\Lambda}_{z_j}(x_1^j,x^j_2, x_3^j) \\ &
\leq C \RR_{\X_1,\X_2}\left(\{x_3^1,\ldots,x_3^N\} \right) \prod_{k=1,2}\left(\E_\omega \Big\| \sum_{j=1}^N r_j(\omega)   x_k^j \Big\|_{\X_k}^2 \right)^{\frac12}  \end{split} 
\end{equation}
for all $N$ and all choices $z_j$,  $x^j_k\in \X_k$, $k=1,2,3$.
In other words, \begin{equation} 
\label{Rlambda}\begin{split}&\quad 
\RR_{\X_3|\X_1,\X_2}(\{\Lambda_{z}\})\\ &=   {\mathcal R_{\X_1,\X_2}\left(\big\{(x_1,x_2)\mapsto{\Lambda}_{z}(x_1,x_2,x_3): x_3 \in \cic{x}_3,\, \mathcal R_{\X_1,\X_2}(\{\cic{x}_3 \})= 1,\, z
\in Z  \big\} \right).}\end{split}
\end{equation}
Obviously $\RR_{\mathcal X_3|\X_1,\X_2}(\{\lambda\mathsf{id}: |\lambda|\leq 1\})=1$. When no confusion arises, we agree to omit the subscripts in \eqref{Rlambda}. Finally, we set  
\begin{equation}
\label{Rlambda2} \mathcal R_{ \X_1,\X_2,\X_3}(\{\Lambda_{z}:z\in Z\}):=
\sup_{\sigma} \mathcal R_{ \X_{\sigma(3) }|\X_{\sigma(1)},\X_{\sigma(2)}}(\{\Lambda_{z}:z \in Z\}),
\end{equation}
with  supremum being taken over all cyclic permutations $\sigma $ of $\{1,2,3\}$. In the attempt of clarifying   the meaning of condition \eqref{Rlambda},   we  pause our exposition to provide some examples of identifications \eqref{trcontfm} falling within the scope of our theory. 

\subsubsection{The case $\X_1=\X,\X_2=\X',\X_3=\mathbb C$} In this case, the identification    \eqref{trcontfm} is  the obvious one. Let  $\{T_{z}:z \in Z\}\subset \mathcal L(\X)$ be an $\mathcal R$-bounded family of operators with $\mathcal R$-bound $K>0$, that is 
 \begin{equation} \label{Rboundsop}
 \ \E_\omega \Big\| \sum_{j=1}^N r_j(\omega)   T_{z_j}x_j \Big\|_{\X}^2 \leq K^2  \E_\omega \Big\| \sum_{j=1}^N r_j(\omega)   x_j \Big\|_{\X}^2 
 \end{equation}
 for all integers $N$ and choices of $x_j \in \X$ and of indices $z_j$. Then the family of  trilinear forms on $ \X\times \X'\times \mathbb C$
 $$
\Lambda_z(x,x',\lambda):= \l T_z x,x'\r\lambda
 $$
satisfies $\mathcal R_{\X_1,\X_2,\X_3}(\{\Lambda_z\}) \leq 2K$. This follows immediately from the definitions and from the fact that $\mathcal R$-bounds are preserved under adjoint, so that $\{T'_{z}:z \in Z\}\subset \mathcal L(\X') $ has $\mathcal R$ bound $K$ as well. 
\subsubsection{UMD function lattices} \label{sslattices}We consider a $\sigma$-finite measure space $(\mathcal N,\nu)$ and three Banach lattices  $\X_1,\X_2,\X_3$ of measurable functions, satisfying the property
\begin{equation}
\label{latticeid}
\Big|\mathsf{id}(x_1,x_2,x_3):= \int_{\mathcal N} x_1(t)x_2(t) x_3(t)\, \d \nu(t) \Big| \leq \prod_{j=1}^3 \|x_j\|_{\X_j}.  \end{equation}
See for instance \cite{RDF86} for extensive definitions and properties of (quasi)-Banach lattice function spaces.
In particular, in this setting, if $\X_k$  has finite cotype, 
$$
 \Big\| \sum_{j=1}^N r_j x^j_k \Big\|_{\mathrm{Rad}(X_k)} \sim   \Big\| t \mapsto \Big( \sum_{j=1}^N  |x^j_k(t)|^2)\Big)^{\frac12} \Big\|_{\X_k}
$$
with comparability constant depending only on the cotype character of $\X_k$.
Further, for a function lattice $\X$, we denote by $ \X^\infty$ the completion of the normed space of finite sequences $\cic{x}=(x^1,\ldots,x^n)$, $x^j\in \X$, with respect to the norm
$$
\| \cic x\|_{\X^\infty}:= \Big\| t\mapsto \sup_{j=1,\ldots,n} |x^j(t)|\Big\|_{\X}.
$$
With these definitions it is immediately seen that, for 
  a family $\{\Lambda_{z}:z \in Z\}$ of trilinear continuous forms on  $\X_1,\X_2,\X_3$, 
we have
\begin{equation} \label{Rclarify}
\mathcal R_{\X_1,\X_2,\X_3}(\{\Lambda_z: z \in Z\}) = C \sup_\sigma
\sup_{\cic z} \|\Lambda_{\cic z}\|_{B(\mathrm{Rad}(\X_{\sigma(1)}), \mathrm{Rad}(\X_{\sigma(2)}), \X_{\sigma(3)}^\infty)}   \end{equation} 
where the family $\Lambda_{\cic z}$, indexed by finite tuples $\cic z \subset Z^{\mathbb N}$, is defined as 
  \begin{align*} &
  \Lambda_{\cic z} (\cic{x}_1, \cic x_2, \cic x_3)= \sum_{j=1}^n \Lambda_{z_j}(x_1^j,x_2^j,x_3^j),
 \\ & \cic{x}_{\sigma(1)} \in \mathrm{Rad}(\X_{\sigma(1)}), \, \cic{x}_{\sigma(2)} \in \mathrm{Rad}(\X_{\sigma(2)}),\, \cic{x}_{\sigma(3)} \in  \X_{\sigma(3)}^\infty.\end{align*}
In the relevant case where $\X_j=L^{q_j}(\nu)$ and $q_1,q_2,q_3$ is a Banach H\"older tuple, we may further identify
$$
\mathrm{Rad}(\X_{j}) \equiv L^{q_j}(\nu;\ell^2), \qquad \X_{j}^\infty \equiv L^{q_j}(\nu;\ell^\infty).
$$
We conclude that \eqref{Rclarify} is then equivalent to familiar bounds of the type
\begin{equation}
\label{Rclarify2}
\Lambda_{\cic z} (\cic{f}_1, \cic{f}_2, \cic{f}_3)  \lesssim \|\cic{f}_{\sigma(1)}\|_{L^{q_1}(\ell^2)}  \|\cic{f}_{\sigma(2)}\|_{L^{q_2}(\ell^2)} \|\cic{f}_{\sigma(3)}\|_{L^{q_3}(\ell^\infty)}
\end{equation}   
uniformly in the choice of $\cic z \in Z^{ N}$, $N\in \mathbb N$ and of permutations $\sigma$.
\section{The    model paraproducts and the nontangential RMF} \label{SecPP}
Throughout this section, we work with an (abstract) tuple  $\X_1,\mathcal X_2,\mathcal \X_3$  of Banach spaces tied by the identification \eqref{trcontfm}, which we keep implicit in our notation. Below, we will consider (singular integral) operators acting on $\X_j$-valued functions. These operators will always be initially defined for Schwartz tensors, that is functions of the type
$$
f= \sum_{\ell=1}^N  x_\ell f_\ell, \qquad x_\ell \in \X_j, \,f_j \in \mathcal S (\R^d).
$$
We denote such a class by $\mathcal S(\R^d;\X_j)$ or simply  $\mathcal S(\X_j)$ when the dimension is clear from context or unimportant. Note that $\mathcal S(\X_j)$ is dense in the Bochner spaces $L^p(\X_j)$,  $0<p<\infty$. 

\subsection{The model paraproducts} 
We are ready to set up the definition of an operator-valued paraproduct form acting on tuples of $\X_j$-valued functions. 
Let $ \cic{\Phi}$ be the collection of    (measurable) functions  $\phi$ on $\R^d$ satisfying   \begin{equation}
\label{decay} (1+|x|)^{d+1} (\partial^\alpha\phi)(x) \leq C \qquad 
\end{equation}
for all multi-indices $|\alpha| \leq N$, with $N$ large enough (in fact, $N=2$ will suffice). We adopt the notation  $$
\phi_t= \mathrm{Dil}^1_t \phi,\qquad \phi_t(x)= \frac{1}{t^d}\phi\left( \frac{x}{t}\right), \qquad x \in \R^d, t\in (0,\infty).$$
Throughout the article we use the notation
\[
B_{r}(x) =\{y\in \R^d:|y-x|<r\}, \qquad x\in \R^d,\; r>0.
\]
For a (Banach-valued) $f\in L^{1}_{\mathrm{loc}}(\R^d; \X)$ and fixed parameters $\alpha\in B_1(0)$ and $0<\beta\leq 1$, we define   strongly measurable  functions on the upper half space $\R^{d+1}_+=\R^d \times (0,\infty) $
\begin{equation}
\label{emb}
F_{\phi,\alpha,\beta}(f)(u,t)=\int_{\R^d} f(z) \phi_{\beta t} (u+\alpha t-z) \, \d z, \qquad (u,t) \in \R^{d+1}_+.
\end{equation}
Given three functions $\phi_1,\phi_2,\phi_3 \in \cic{\Phi}$ and a bounded, weakly measurable  family     $$(u,t) \in \R^{d+1}_+\mapsto \mathsf{w}(u,t)[\cdot,\cdot,\cdot] \in B(\X_1, \X_2,\X_3) $$
we consider the following model paraproduct forms, for $\pi\in \{1,2,3\}$,
\begin{align*} & \quad 
\mathsf{PP}^{\pi}_{\mathsf w}(f_1,f_2,f_3)\\ & = \int_{\R^{d+1}_+} \mathsf{w} (u,t)\left[ F_{\phi^1,\alpha_1,\beta_1}(f_1)(u,t) , F_{\phi^2,\alpha_2,\beta_2}(f_2)(u,t) ,   F_{\phi^3,\alpha_3,\beta_3}(f_3)(u,t)  \right] \, \frac{\d u \d t}{t},
\end{align*}
initially defined for tuples $f_j \in \mathcal{S} (  \X_j)$. The superscript $\pi$ stands for the  \emph{mean zero type} of the paraproduct, in the sense that  we assume that $$\widehat{\phi^{k}}(0)=0 \qquad \textrm{ for }k\in \{1,2,3\}\backslash \{\pi\}.$$  

The    $L^p$ boundedness results that follow  lie at the core of both the Coifman-Meyer type theorems of Section \ref{SecCMT} and the $T(1)$-theorems of Section \ref{T1secstat}. The RMF property appearing in the statement is extensively described in the next subsection.
 \begin{proposition}\label{thmmodel}   Let $\sigma$ be a permutation of $\{1,2,3\}$. Assume $\mathcal X_{\sigma(1)},\X_{\sigma(2)}$ are  $\mathrm{UMD}$ spaces  and that $\X_{\sigma(3)}$ has the RMF property. Further, assume that
\begin{equation}
\label{ppc}
\mathcal R_{\X_{\sigma(3)}| \X_{\sigma(1)}, \X_{\sigma(2)}}\big( \{\mathsf{w}(u,t): (u,t)\in \R^{d+1}_+\}\big)= K_\sigma <\infty.
\end{equation}

Then, for all      H\"older tuples $(p_1,p_2,p_3)$  such that
$
1< p_1,p_2,p_3 \leq \infty, 
$
and all tuples $f_j\in \mathcal S(\X_j),$ with  $j=1,2,3$, there holds
$$
 |{\mathsf{PP}^{\sigma(3)}_{\mathsf{w}}}(f_1,f_2,f_3)| \lesssim \|f_1\|_{L^{p_1}(\R^d;\X_1)} \|f_2\|_{L^{p_2}(\R^d;\X_2)}  \|f_3\|_{L^{p_3}(\R^d;\X_3)}.$$
 When either $p_{\sigma (1)}=\infty$ or $p_{\sigma (2)}=\infty$, the same result holds with the corresponding norm replaced by $\|f_{\sigma (j)}\|_{\mathrm{BMO}(\R^d,\X_{\sigma (j)})}$.

Furthermore, for all $1\leq p_1,p_2<\infty$, the weak-type estimate
$$
 |{\mathsf{PP}^{\sigma(3)}_{\mathsf{w}}}(f_1,f_2,f_3)| \lesssim \|f_1\|_{L^{p_1}(\R^d,\X_1)} \|f_2\|_{L^{p_2}(\R^d,\X_2)} |F_3|^{1-(\frac{1}{p_1}+\frac{1}{p_2})}
$$
holds for all functions $f_1\in \mathcal{S} ( \X_1) ,f_2 \in \mathcal{S} ( \X_2)$ , sets  $F_3\subset \R^d$ of finite measure, and $  \mathcal{S} ( \X_3)$-valued functions $f_3$ with $\|f_3\|_{\X_3} \leq \cic{1}_{F_3'}$, $F_3'$ being a suitable major subset of $F_3$ depending on $f_1, f_2, F_3$ and the H\"older tuple only. 
 
The implied constants depends on $ (p_1,p_2)$,  on $K_\sigma$ and $\{\beta_j:j=1,2,3\}$, as well as the $\mathrm{UMD}$ and $\mathrm{RMF}$ character of the spaces involved, and can be explicitly computed.
\end{proposition}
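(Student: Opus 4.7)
The plan is to deduce Proposition \ref{thmmodel} from the Banach-valued outer $L^p$ machinery developed in Sections \ref{SecRM}--\ref{SecRMF}, in the spirit of the scalar Do--Thiele approach to paraproducts. Viewing the $\frac{\d u \d t}{t}$-integral over $\R^{d+1}_+$ as an integral against a Carleson-type outer measure $\mu$, I would first establish a trilinear outer H\"older inequality of the form
\begin{equation*}
|\mathsf{PP}^{\sigma(3)}_{\mathsf w}(f_1,f_2,f_3)| \lesssim K_\sigma \prod_{j=1}^{3}\bigl\|F_{\phi^j,\alpha_j,\beta_j}(f_j)\bigr\|_{L^{p_j}_{\mu}(\mathcal S_j)},
\end{equation*}
splitting the three factors into appropriately matched sizes $\mathcal S_j$. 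The two mean-zero slots $j\in\{\sigma(1),\sigma(2)\}$ are paired with a randomized $L^2$-type (Banach-valued square function) size, while the remaining slot $j=\sigma(3)$ is paired with a Rademacher-maximal $L^\infty$-type size. The coupling constant $K_\sigma$ is exactly the one produced by pairing $\mathsf w(u,t)$ against randomized tuples via the definition \eqref{Rboundsform} of trilinear $\RR$-boundedness, which was in fact tailored to make this H\"older step work cleanly.

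The three Carleson embedding estimates that result are of independent interest and are the content of Sections \ref{SecLCET}--\ref{SecRMF}. For the two mean-zero entries, the required outer $L^{p_j}_\mu(\mathcal S_j)$-bound for $F_{\phi^j}(f_j)$ is a Banach-valued Littlewood--Paley--Stein-type inequality whose proof invokes the UMD assumption on $\X_{\sigma(1)}, \X_{\sigma(2)}$ in the usual way (randomized Calder\'on reproducing formula, together with UMD boundedness of the vector-valued Hilbert transform). For the mean-non-zero entry, the embedding of $F_{\phi^{\sigma(3)}}(f_{\sigma(3)})$ into the outer space equipped with the Rademacher-maximal size is, essentially by construction, the content of the nontangential RMF property of $\X_{\sigma(3)}$. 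The $p_j=\infty$ cases are handled by replacing the $L^\infty$-outer norm with a Carleson-type norm and invoking the vector-valued Carleson--BMO duality on $\R^d$.

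The weak-type endpoint at $p_1=p_2=1$ follows the standard Do--Thiele restricted-weak-type scheme: given $f_1,f_2$ and $F_3\subset\R^d$ of finite measure, I would extract a major subset $F_3'\subset F_3$ on which the Hardy--Littlewood maximal functions of $\|f_j\|_{\X_j}$ are controlled by a multiple of $|F_3|^{-1/p_j}\|f_j\|_{p_j}$. For $f_3$ with $\|f_3\|_{\X_3}\le \cic 1_{F_3'}$, the RMF-based outer $L^{p_3}$-norm of $F_{\phi^{\sigma(3)}}(f_3)$ at a finite $p_3<\infty$ then becomes controlled, and the claimed weak bound follows from the strong estimate on this relocated H\"older tuple.

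The principal obstacles I foresee are twofold. First, arranging the outer H\"older inequality so that the asymmetric triple of sizes $(\mathcal S_j)$ pairs with the definition \eqref{Rboundsform} of multilinear $\RR$-boundedness without a loss: in the scalar Do--Thiele theory the $L^2$--$L^2$--$L^\infty$ H\"older is essentially free, but in the Banach-valued setting one has to route through randomized (Rad-valued) norms and multilinear form-valued tests, which requires the setup of Sections \ref{SecRM}--\ref{SecOM}. Second, and more crucially, proving the Carleson embedding for the non-mean-zero slot is where the nontangential RMF property really earns its keep; the precise (nontangential, rather than dyadic) form of the property becomes essential for compatibility with the continuous tent-type outer measure on $\R^{d+1}_+$, and this is the technical heart of the argument.
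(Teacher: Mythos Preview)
Your strong-type sketch is exactly the paper's proof: apply the outer H\"older inequality (Proposition~\ref{outerholder}) with lacunary sizes $\mathsf s_2^{\X_{\sigma(j)}}$ on the two mean-zero slots and overlapping size $\mathsf r_\infty^{\X_{\sigma(3)}}$ on the remaining one, then feed in the Carleson embeddings of Propositions~\ref{cet} and~\ref{ovcarlemb}. The BMO endpoints are the $L^\infty$ embeddings \eqref{embSq}, \eqref{embRq} rather than a duality argument, but this is cosmetic.

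The weak-type sketch, however, has a real gap. You propose to pick the major subset $F_3'$ and then ``relocate to a strong H\"older tuple with a finite $p_3$.'' At the extremal point $p_1=p_2=1$ there is no such tuple available: the strong estimate requires all three exponents strictly above $1$, and $f_1,f_2$ are only in $L^1$. Placing $f_3$ in outer $L^{p_3}$ with $p_3<\infty$ forces $f_1,f_2$ into outer $L^{q_j}$ with $q_j>1$, and nothing in your outline manufactures that control from $\|f_j\|_{L^1(\X_j)}$ alone.

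The paper's mechanism is different and more delicate. After building $F_3'$ so that $\mathrm{supp}\,f_3$ stays far from the level sets $\{M(\|f_j\|_{\X_j})>C2^k\}$, one decomposes $\phi^3=\sum_{k\ge 0}2^{-40k}\phi^3_k$ with $\phi^3_k$ supported at spatial scale $2^k$; the geometric point is that $F_{\phi^3_k}(f_3)$ then \emph{vanishes} on a union of tents $\widehat E_k$ covering the bad region at that scale. On the complement $\R^{d+1}_+\setminus\widehat E_k$ the \emph{local} lacunary embedding (Proposition~\ref{cetloc}) gives an outer $L^\infty(\mathsf s_2)$ bound of order $2^k$ for $F_{\phi^j}(f_j)$, $j=1,2$; log-convex interpolation against the global weak-$L^1$ embedding \eqref{L1Sqemb} upgrades this to an outer $L^2(\mathsf s_2)$ bound of order $2^k$. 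One then places $f_3$ in outer $L^\infty(\mathsf r_\infty)$ (not finite $p_3$), applies outer H\"older with tuple $(2,2,\infty)$ on each $\R^{d+1}_+\setminus\widehat E_k$, and sums in $k$ against the $2^{-40k}$ decay. The scale-decomposition of $\phi^3$, the resulting support disjointness \eqref{suppF}, and the local embeddings are the missing ingredients; without them the endpoint does not close.
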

Proposition \ref{thmmodel} will be proved within the framework of Banach-valued outer $L^p$-spaces developed in Sections \ref{SecRM} and \ref{SecOM}. The argument is given in Section \ref{Secpfthmmodel}.

\subsection{The nontangential RMF} \label{ssRMF} In this subsection, we introduce and develop the notion of the nontangential Radema\-cher maximal function and the corresponding RMF property. Proofs of Lemmata \ref{lemmastep}, \ref{lemmaRMF1} and \ref{lemmaRMF2} below are postponed to Section \ref{ssprfrmf}. 

Denote  the \emph{cone} and \emph{truncated cone} at $s>0$, of aperture $\alpha>0$, over $y \in \R^d$ by
\begin{equation} \label{notcones}
\begin{split}
& 
\Gamma^\alpha(y)=\{(u,t) \in \R^{d+1}_+:  |u-y|<\alpha t\}.
\\
&  
\Gamma^\alpha_s(y)=\{(u,t) \in \R^{d+1}_+: 0<t<s, \, |u-y|<\alpha t\}. 
\end{split}
\end{equation}
We omit the superscript $\alpha$ when equal to 1.    
For $\phi \in \cic{\Phi}$ as in \eqref{decay} and $f\in L_{\mathrm{loc}}^1(\R^d; \X_3)$,  we define the nontangential Rademacher  maximal function  of $f$, and its \emph{grand} version   by
$$
\M_{\phi,\alpha} f(x) :=  \mathcal R_{\X_1, \X_2}\left(\big\{ f* \phi_t (y) : (y,t) \in \Gamma^\alpha(x)  \big\} \right), \quad \M_{\cic{\Phi},\alpha} f(x):=\sup_{\phi \in \cic{\Phi} } \M_{\phi,\alpha} f(x).
$$
For a fixed $p \in (1,\infty) $, we   say that $\X_3 $   has the nontangential RMF$_p$ property, with respect to the identification \eqref{trcontfm}, if$$
  \|\M_{\cic \Phi,1}\|_{L^p(\R; \mathcal X_3) \to L^p(\R) } <\infty.
$$
The following lemma essentially allows us to work with a single approximation of unity $\Psi \in \cic{\Phi}$.
\begin{lemma} \label{lemmastep}
Let $\Psi\in \cic{\Phi}$ be a   Schwartz function supported in $B_\rho(0)$, with $\int_{\R^d} \Psi=1$.  Then, for all  $f\in \mathcal S( \X_3)$    and $1\leq p\leq \infty$, we have 
 \begin{equation}
 \|\M_{\cic{\Phi},\alpha} f \|_{  L^p(\R^d)} \lesssim \|\M_{\Psi,3\rho\alpha} f \|_{  L^p(\R^d) }
 \end{equation} 
 with implied constant only depending on $C$ in \eqref{decay}, $\rho$ and  on the dimension $d$.
\end{lemma}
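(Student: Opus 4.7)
The plan is to establish the pointwise bound
$$
\M_{\cic\Phi,\alpha} f(x) \lesssim \M_{\Psi, 3\rho\alpha} f(x), \qquad x\in\R^d,
$$
which immediately gives the $L^p$ estimate by monotonicity.  Fixing $x$ and $\phi\in\cic\Phi$, the task reduces to bounding the $\mathcal R_{\X_1,\X_2}$-bound of the family $\{f*\phi_t(y):(y,t)\in\Gamma^\alpha(x)\}$ by that of $\{f*\Psi_s(y'):(y',s)\in\Gamma^{3\rho\alpha}(x)\}$, uniformly in $\phi\in\cic\Phi$.  The key observation that makes such a comparison possible is that absolutely convergent averages preserve $\mathcal{R}$-bounds: by definition \eqref{rbound-ab}, if one can express
$$
f*\phi_t(y)=\iint b(u,s)\, (f*\Psi_{st})(y-ut)\,\frac{du\,ds}{s},
$$
with $\iint|b|\frac{du\,ds}{s}\lesssim 1$ and with each $(y-ut,st)$ lying in the widened cone $\Gamma^{3\rho\alpha}(x)$, then the desired inequality follows at once.

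The technical core of the proof is therefore a \emph{subordination identity}
$$
\phi(z)=\iint b_\phi(u,s)\, \Psi_s(z-u)\,\frac{du\,ds}{s},
$$
supported in a region $\{(u,s):s\sim 1,\ |u|\lesssim\rho\}$ (so that, after rescaling by $t$, the geometric constraint produces exactly the aperture factor $3\rho\alpha$) and satisfying $\iint|b_\phi|\frac{du\,ds}{s}\lesssim 1$ uniformly over $\phi\in\cic\Phi$.  I would construct $b_\phi$ in two stages.  First, a dyadic annular decomposition $\phi=\sum_{k\geq 0}\phi^{k}$, with $\phi^k$ localized to $\{|z|\sim 2^k\rho\}$ and, using the derivative bounds in \eqref{decay}, satisfying $\|\partial^\alpha\phi^k\|_\infty\lesssim 2^{-k(d+1)}$ for $|\alpha|\leq N$.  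Second, at each dyadic scale one represents $\phi^k$ as a superposition $\int a_k(u)\,\Psi_{2^k}(\cdot-u)\,du$ centered in $\{|u|\lesssim 2^k\rho\}$; this amounts to a deconvolution against $\Psi_{2^k}$ in the class of smooth compactly supported functions, feasible because $\widehat\Psi(0)=1$ forces $\widehat\Psi$ to be nonvanishing near the origin, while the smoothness and decay of $\phi^k$ give the needed $L^1$-control on $a_k$.  After rescaling by $t$ and convolving with $f$, a direct computation verifies that $(y-ut,st)\in\Gamma^{3\rho\alpha}(x)$ whenever $(y,t)\in\Gamma^\alpha(x)$ and $(u,s)\in\mathrm{supp}\,b_\phi$, so that the stability of $\mathcal R$-bounds under averaging transfers the estimate from $\Psi$ to $\phi$, uniformly in $\phi$.

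The main obstacle I expect is securing the quantitative bound $\sum_k\int|a_k|\,du\lesssim 1$: naive estimates lose several powers of $2^k$, and the summability has to come from the full strength of the derivative control in \eqref{decay} rather than from pointwise decay alone.  This is exactly the point at which the assumption $|\alpha|\leq N$ (with $N=2$ claimed to suffice) gets used.  Once this bookkeeping is in place, the rest of the argument is geometric and follows the outline above.
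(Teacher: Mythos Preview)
Your overall strategy—express $f*\phi_t(y)$ as an $L^1$-average of values $f*\Psi_s(y')$ and then invoke the stability of $\mathcal R$-bounds under averaging—is exactly the right engine, and it is the same engine the paper uses. The difficulty is in the decomposition you propose, and there is a genuine gap there.

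Your subordination identity requires writing $\phi^k = a_k * \Psi_{2^k}$ with $a_k$ supported in $\{|u|\lesssim 2^k\rho\}$ and $\sum_k\|a_k\|_1\lesssim 1$. But this is a global deconvolution: on the Fourier side you are dividing by $\widehat\Psi(2^k\xi)$ on all of $\R^d$, and the fact that $\widehat\Psi(0)=1$ only guarantees nonvanishing on a small ball around the origin. Since $\Psi$ is compactly supported, $\widehat\Psi$ is entire and will in general have real zeros, so the quotient $\widehat{\phi^k}/\widehat\Psi(2^k\cdot)$ need not even be a tempered distribution, let alone an $L^1$ function with compact support. The compact support of $a_k$ is essential to your cone geometry, and you give no mechanism to produce it. In short, the step ``feasible because $\widehat\Psi(0)=1$ forces $\widehat\Psi$ to be nonvanishing near the origin'' does not do what you need.

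The paper avoids this obstruction by going in the opposite direction in scale: it quotes Stein's decomposition $\phi=\sum_{k\ge 0}\Psi_{2^{-k}}*\eta_k$ with $\|\eta_k\|_1\lesssim 2^{-2kd}$, where the $\eta_k$ are not assumed to have compact support. Because the scales $2^{-k}t$ are \emph{smaller} than $t$, the cone over $x$ at aperture $\alpha$ in the variable $(y,t)$ turns, after the substitution $s=2^{-k}t$, into a cone of aperture $3\rho\,2^k$ in the variable $(y,s)$. This yields only the pointwise inequality
\[
\M_{\cic\Phi,\alpha} f(x)\ \lesssim\ \sum_{k\ge 0} 2^{-2dk}\,\M_{\Psi,\,3\rho\alpha 2^k} f(x),
\]
with apertures growing in $k$, \emph{not} the fixed-aperture pointwise bound you are aiming for. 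The reduction to a single aperture happens only after passing to $L^p$ norms and using the dilation relation $\|\M_{\Psi,\lambda\beta}f\|_p\sim\lambda^{d/p}\|\M_{\Psi,\beta}f\|_p$; the factor $2^{dk/p}$ this costs is beaten by $2^{-2dk}$. So the paper does not prove—and your plan should not expect to prove—the pointwise inequality $\M_{\cic\Phi,\alpha} f(x)\lesssim \M_{\Psi,3\rho\alpha} f(x)$.
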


\begin{remark} The $L^p$ boundedness of the nontangential Rademacher maximal function does not depend on either the aperture or the ambient space dimension. In other words, 
suppose $\mathcal X_3$   has the nontangential RMF$_p$ property for some $p \in (1,\infty)$. Then Lemma \ref{lemmastep} and a simple tensor product argument imply that
$
 \|\M_{\cic{\Phi},\alpha} \|_{L^p(\R^d; \X_3) \to L^p(\R^d)}
 $ is finite as well. A dilation argument then yields that $\M_{\Psi,\alpha}$ also maps $L^p(\R^d; \X_3)$ into  $L^p(\R^d)$ with a bound independent on $\alpha$. Lemma \ref{lemmastep} then implies that the grand version $\M_{\cic{\Phi},\alpha}$ is $L^p$-bounded as well uniformly on $\alpha$. For this reason, we omit the subscript $\alpha$ and write $\M_{\cic{\Phi}}$ when $\alpha$ is unimportant or clear from the context.
\end{remark}

\begin{remark} \label{remtype2}If   $\mathcal X_1, \X_2$ have cotype 2,  as pointed out in  Remark \ref{remtype},  $\mathcal R$-bounds and  uniform bounds are equivalent. Therefore, in this case  $\M_\psi f\lesssim M (\|f\|_{\mathcal X_3})$ pointwise, where $M$ is   the standard Hardy-Littlewood maximal function. \end{remark}
The next lemma, among other implications, shows that nontangential $\mathrm{RMF}_p$ is actually $p$-independent. By virtue of this observation, in the sequel we say that $\X_3$ has   nontangential $\mathrm{RMF}$ to mean that nontangential $\mathrm{RMF}_p$  holds for some (thus for all) $p\in (1,\infty)$.
\begin{lemma} \label{lemmaRMF1} Assume $\mathcal X_3$ has the nontangential $\mathrm{RMF}_p$ property for some $p\in (1,\infty)$.
Then 
\begin{itemize}
\item[$\cdot$] $\mathcal X_3$ has the nontangential $\mathrm{RMF}_q$ property for all $q\in (1,\infty)$;
\item[$\cdot$] $\M_{\cic{\Phi}}:  H^{1}(\R^d;\X_3) \to L^{1}(\R^d ) $ for all $d\geq1$, $\alpha>0$;
\item[$\cdot$] $\M_{\cic{\Phi}}: \mathrm{BMO}(\R^d;\X_3) \to \mathrm{BMO}(\R^d) $ for all $d\geq1$, $ \alpha>0$;
\item[$\cdot$] $\M_{\cic{\Phi}}:  L^{1}(\R^d;\X_3) \to L^{1,\infty}(\R^d ) $ for all $d\geq1$, $\alpha>0$.
\end{itemize}
\end{lemma}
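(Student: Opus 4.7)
The plan is to derive all four consequences from a single pointwise Calder\'on-Zygmund-type estimate on $\M_\Psi$ applied to a mean-zero atom, after which standard maximal-function machinery handles the rest. By Lemma \ref{lemmastep} and the aperture-invariance noted in the remark preceding the statement, it suffices to work with a fixed Schwartz bump $\Psi\in\cic{\Phi}$ at a single aperture.

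The core pointwise bound to establish is: for $b_Q$ supported on a cube $Q$ with $\int b_Q = 0$, and $x$ outside a fixed dilate $Q^*$ of $Q$,
\[
\M_\Psi b_Q(x) \lesssim \frac{\ell(Q)\,\|b_Q\|_{L^1(\X_3)}}{(|x - z_Q| + \ell(Q))^{d+1}}.
\]
To prove this, fix any finite selection $(y_j, t_j)\in\Gamma(x)$ and test vectors $x_1^j\in\X_1$, $x_2^j\in\X_2$; the mean-zero property and trilinearity of $\mathsf{id}$ yield
\[
\sum_j \mathsf{id}\bigl(x_1^j, x_2^j, b_Q * \Psi_{t_j}(y_j)\bigr) = \int_Q \sum_j \eta_j(z)\, \mathsf{id}\bigl(x_1^j, x_2^j, b_Q(z)\bigr)\,\d z,
\]
with $\eta_j(z) := \Psi_{t_j}(y_j - z) - \Psi_{t_j}(y_j - z_Q)$ obeying the scalar CZ estimate $\sup_j |\eta_j(z)| \lesssim \ell(Q)(|x-z_Q|+\ell(Q))^{-d-1}$ by a mean-value argument. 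For each fixed $z$, the map $B_z := \mathsf{id}(\cdot,\cdot,b_Q(z))$ is a bilinear form with $\|B_z\| \le \|b_Q(z)\|_{\X_3}$; the randomization identity $\sum_j B_z(a_j,b_j) = \E\, B_z(\sum r_j a_j, \sum r_j b_j)$ together with Kahane's contraction principle (to absorb the scalars $\eta_j(z)$ into the $x_1^j$) bound the inner sum by $\|b_Q(z)\|_{\X_3}\sup_j|\eta_j(z)|$ times the product of Rademacher norms of $(x_k^j)_j$. Integrating in $z$ and appealing to the definition of $\M_\Psi$ produce the claim.

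Granted this pointwise estimate, the weak-$(1,1)$ bullet follows from the vector-valued Calder\'on-Zygmund decomposition $f = g + b$ at level $\lambda$: the good part is dispatched by the hypothesized $L^p$-bound plus Chebyshev, and the bad part by summing the kernel estimate above outside $\bigcup_Q Q^*$. The $\mathrm{BMO}\to\mathrm{BMO}$ bullet uses the subadditivity $\M_\Psi(f+g) \le \M_\Psi f + \M_\Psi g$ (triangle inequality for the $B(\mathrm{Rad}(\X_1),\mathrm{Rad}(\X_2))$-norm): for a ball $B$, the decomposition $f = f_B + (f-f_B)\mathbf{1}_{2B} + (f-f_B)\mathbf{1}_{(2B)^c}$ produces a constant piece (which contributes nothing to oscillation, since $\M_\Psi$ of a constant $\X_3$-vector is a single scalar), a local piece controlled by the $L^p$-bound combined with John-Nirenberg, and a far-field piece whose oscillation across $B$ is $O(\|f\|_{\mathrm{BMO}(\X_3)})$ by the same CZ-type reasoning as above. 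Interpolation (Marcinkiewicz between the weak-$(1,1)$ and the hypothesized $L^p$ bound, then Stampacchia between $L^p$ and $\mathrm{BMO}$) yields $p$-independence of nontangential $\mathrm{RMF}_p$. Finally, $H^1\to L^1$ follows by the atomic decomposition: on $2B$ one uses H\"older against the $L^p$-bound, and on $(2B)^c$ the pointwise CZ estimate integrates to $O(1)$ uniformly in the atom.

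The main obstacle is precisely the opening pointwise estimate on $\M_\Psi b_Q$. Because an $\mathcal{R}$-bound can be strictly larger than a pointwise supremum of $\X_3$-norms, it is not enough to bound each $\|b_Q*\Psi_t(y)\|_{\X_3}$ individually via cancellation; one has to exploit the bilinear-form structure of $\mathsf{id}(\cdot,\cdot,b_Q(z))$ jointly over the selection $(y_j,t_j)$, as in the randomization-plus-contraction argument above, in order to reduce the vector-valued $\mathcal{R}$-maximal quantity to the scalar Calder\'on-Zygmund kernel.
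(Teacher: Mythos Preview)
Your argument is correct and takes a genuinely different route from the paper's.

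The paper first reduces to dimension $d=1$ by a tensor-product argument, and then handles the $H^1\to L^1$ bound via an integration-by-parts trick: writing $F$ for the compactly supported primitive of the atom $f$, one has $f*\Psi_t = t^{-1}F*\Psi'_t$, so that on the annulus $A_k$ (where only scales $t\gtrsim 2^k|Q|$ contribute) one gets $\M_\Psi f(x)\lesssim (2^k|Q|)^{-1}\M_{\cic\Phi}F(x)$ and then invokes the assumed $L^p$-boundedness of $\M_{\cic\Phi}$ applied to $F$. The weak-$(1,1)$ bound is then deduced from $H^1\to L^1$ by Calder\'on--Zygmund, and the BMO endpoint is obtained by splitting the parameter set $\Gamma(x)$ (rather than the function) into the tent $T_Q$ and its complement, taking $c_Q=\mathcal R(\{f*\Psi_t(y):(y,t)\notin T_Q\})$ as the subtracted constant.

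Your approach replaces all of this with the single pointwise Calder\'on--Zygmund estimate $\M_\Psi b_Q(x)\lesssim \ell(Q)\|b_Q\|_{L^1(\X_3)}(|x-z_Q|+\ell(Q))^{-(d+1)}$, proved by the randomization-plus-contraction device you describe. This is more direct: it works immediately in every dimension, avoids the primitive trick, and yields the weak-$(1,1)$ and $H^1\to L^1$ bounds without the intermediate reduction. Your BMO argument (three-piece decomposition of $f$) is the standard template for sublinear CZ-type operators; note that for the far-field piece $h=(f-f_B)\mathbf 1_{(2B)^c}$ you cannot literally apply the mean-zero estimate, but the same randomization device applied to the kernel differences $\Psi_t(y-\cdot)-\Psi_t(y'-\cdot)$ with $|y-y'|\leq 2r_B$ gives $|\M_\Psi h(x)-\M_\Psi h(x_B)|\lesssim\|f\|_{\mathrm{BMO}(\X_3)}$ for $x\in B$, after summing the annular contributions $\sum_k k2^{-k}$. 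What your approach buys is uniformity in $d$ and a cleaner logical flow; what the paper's approach buys is that it never needs to unpack the definition of the $\mathcal R$-bound beyond its formal properties (subadditivity under union, $L^1$-averaging), at the cost of the dimension-one reduction.
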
 
In the final lemma of this section, we remark that the nontangential RMF property is enjoyed in several settings of type \eqref{trcontfm}, in particular by all (commutative and noncommutative) reflexive $L^p$ spaces.
\begin{lemma}\label{lemmaRMF2}
The Banach space $\mathcal X_3$ satisfies     the nontangential RMF property  when
\begin{itemize}
\item[(1)] $\X_3=\mathbb C$, $\X_1,\X_2$ arbitrary Banach spaces; here $\mathcal X_1$ is identified with a subspace of $(\mathcal X_2)'$;
\item[(2)]     $\X_1$, $\X_2$ have cotype 2;
\item[(3)] $\mathcal X_1, \mathcal X_2,\mathcal X_3$ are UMD Banach function lattices on the same $\sigma$-finite measure space $(\mathcal N, \nu)$ with the identification \eqref{latticeid};  \item[(4)] for $p\in (1,\infty)$,  $\X_1= \mathbb C$, $\mathcal X_2=L^{p'}(\mathcal A, \tau)$ $\X_3=  L^{p}(\mathcal A, \tau)$, with obvious \eqref{trcontfm}.  The above notation stands for the noncommutative $L^p$ spaces on a von Neumann algebra $\A$ equipped with a normal, semifinite, faithful trace $\tau$.
\end{itemize}
\end{lemma}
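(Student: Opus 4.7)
The plan is to handle the four cases separately; in each, an identification of the R-bound reduces $\M_\phi f$ to a scalar-valued maximal operator whose $L^p$ mapping property is already available. Throughout, Lemma \ref{lemmastep} allows me to work with a single smooth kernel $\Psi$ in place of the whole family $\cic{\Phi}$.

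Cases (1) and (2) are essentially immediate. In (1), the contraction principle pulls the scalars $\lambda_j \in \mathbb{C}$ out of the Rademacher sums, giving $\mathcal{R}_{\X_1,\X_2}(\{\lambda_j\}) = \sup_j |\lambda_j|$, so $\M_\phi f$ coincides with the classical scalar nontangential maximal function. In (2), Remark \ref{remtype} identifies R-bounds under cotype $2$ with operator-norm suprema, which via \eqref{trcontfm} are dominated by $\X_3$-norm suprema; hence $\M_\phi f(x) \lesssim M(\|f(\cdot)\|_{\X_3})(x)$ with $M$ Hardy--Littlewood, and we conclude.

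For case (3), the lattice pointwise Cauchy--Schwarz inequality applied inside \eqref{latticeid} yields, by the same mechanism behind \eqref{Rclarify2}, the equivalence
\begin{equation*}
\mathcal{R}_{\X_1,\X_2}\bigl(\{x_3^j\}\bigr) \sim \Bigl\| \sup_j |x_3^j| \Bigr\|_{\X_3}.
\end{equation*}
Consequently $\M_\phi f(x) \sim \bigl\|\sup_{(y,t)\in \Gamma(x)} |(f * \phi_t)(y,\cdot)|\bigr\|_{\X_3}$, and fiberwise on $\mathcal{N}$ the inner supremum is pointwise controlled by the scalar Hardy--Littlewood maximal function. The $L^p(\R^d)$ bound then follows from the Bourgain--Rubio de Francia theorem that $M$ is bounded on $L^p(\R^d;\X_3)$ for every UMD lattice $\X_3$.

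Case (4) is the main obstacle, since noncommutative $L^p$ is not a Banach lattice and case (3) does not apply. The first step is to compute the R-bound explicitly: with $\X_1 = \mathbb{C}$ the Rademacher seminorm $\|\sum_j r_j \lambda_j\|_{\mathrm{Rad}(\mathbb{C})}$ reduces to $\|(\lambda_j)\|_{\ell^2}$, and combining $\mathsf{id}(\lambda,x_2,x_3) = \lambda\tau(x_2 x_3)$ with the duality $L^p(\mathcal{A})^* = L^{p'}(\mathcal{A})$ and Kahane--Khintchine, one verifies that $\mathcal{R}_{\mathbb{C},L^{p'}}(\{x_3^j\})$ is comparable, up to universal constants, with the norm of the sequence $(x_3^j)$ in Pisier's noncommutative vector-valued space $L^p(\mathcal{A};\ell^\infty)$. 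With this identification in hand, Lemma \ref{lemmastep} reduces the grand cone supremum to a single smooth kernel; a standard dyadic approximation then replaces the continuous nontangential averages by conditional expectations of a noncommutative martingale, at which point the required $L^p(\R^d)$ estimate is precisely the content of Mei's noncommutative Doob inequality from \cite{Mei07}.
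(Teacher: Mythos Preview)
Cases (1)--(3) are fine and match the paper's argument. The issue is in case (4).

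Your proposed route there is to identify the $\mathcal R$-bound with the Pisier $L^p(\mathcal A;\ell^\infty)$ norm, then perform ``a standard dyadic approximation'' replacing the nontangential averages $f*\Psi_t(y)$, $(y,t)\in\Gamma(x)$, by conditional expectations, and finally appeal to a noncommutative Doob inequality. The middle step is the gap. Passing from the nontangential Rademacher maximal function to a dyadic one is \emph{not} a standard approximation: the paper explicitly records (Remark \ref{dyRMF}) that the equivalence of dyadic RMF and nontangential RMF is an open question, so you cannot invoke it as routine. The nontangential averages $f*\Psi_t(y)$ over a cone are not conditional expectations with respect to any filtration, and there is no known mechanism to dominate their $\mathcal R$-bound by that of a martingale sequence.

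What the paper actually does is bypass the dyadic reduction entirely. It uses Mei's continuous-parameter factorization \cite[Theorem 3.4(ii)]{Mei07}, which for the Poisson kernel (and, as the paper notes, for any $\Psi$ of the right type) produces $a,b\in L^{2p}(L^\infty(\R)\otimes\mathcal A)$ and contractions $\xi_n$ with $f*\Psi_{t_n}(y_n)=a(x)\xi_n(x)b(x)$ and $\|a\|_{2p}\|b\|_{2p}\lesssim\|f\|_p$. This is a continuous analogue of Junge's Doob inequality, not the martingale result itself. Feeding the factorization into the explicit formula $\mathcal R(\{a_n\})=\sup_{\|\lambda\|_{\ell^2}=1}(\E\|\sum r_n\lambda_n a_n\|_{L^p}^2)^{1/2}$ and using that $L^{2p}(\mathcal A)$ has type $2$ gives the pointwise bound $\widetilde\M f(x)\lesssim\|a(x)\|_{L^{2p}(\mathcal A)}\|b(x)\|_{L^{2p}(\mathcal A)}$, from which the $L^p$ estimate follows by H\"older. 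Your identification with $L^p(\mathcal A;\ell^\infty)$ is morally the same object Mei's factorization describes, but you need Mei's \emph{continuous} result applied directly, not a reduction to martingales.
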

\begin{remark} \label{dyRMF} Below, let $\mathcal F_k$, $k \in \mathbb Z$ be the canonical dyadic filtration on $\R^d$. In previous literature on the subject, the dyadic version of $\M_\psi$
$$
\M^\triangle f(x) = \mathcal R\left(\big\{ \E(  f | \F_k) (x) : k\in \mathbb Z \big\} \right)
$$
for $f \in L^1_{\mathrm{loc}}(\R^d; {\X_3})$
has been considered instead. A Banach space ${\X_3}$ such that $\M^\triangle$  maps $L^p(\R^d; {\X_3})$ into  $L^p(\R^d)$ boundedly for some $p\in (1,\infty)$ is said to have the (dyadic) RMF property. The corresponding version of Lemma \ref{lemmaRMF1}, except for the weak-$L^1$ bound, for $\M^\triangle$ has been proved in \cite{HytMcPor08}, where the RMF property has been introduced and employed for the first time. The weak-$L^1$ bound, as well as a more systematic study which shows that the RMF property does not depend on the filtration appearing in the definition, can be found in the articles   \cite{Kemp11,Kemp13}. 

In the same article \cite{HytMcPor08}, the authors prove that the dyadic RMF property holds for 
the same classes of Banach spaces appearing in Lemma \ref{lemmaRMF2}. At present time, we are unaware of further classes of Banach spaces for which the dyadic RMF property, as well as the nontangential RMF property holds true. It seems natural to conjecture that dyadic RMF and nontangential RMF are indeed equivalent. This  is mentioned in Kemppainen's thesis \cite{KempTh} as an open question. While we are unable to derive either implication of this equivalence, the above results entail that dyadic RMF and nontangential RMF   are now known to hold in exactly the same generality. 
\end{remark}

\section{The Coifman-Meyer theorem in UMD spaces} \label{SecCMT}

In this section, we  present our main operator-valued multiplier theorem, Theorem \ref{CMT} below, and we  derive from it Corollaries \ref{introthm1} to \ref{BHT}.

Throughout, we let $\X_1,\X_2,\X_3$ be a triple of Banach spaces tied by the identification \eqref{trcontfm}. 
Let $\cic{1}=(1,1,1)\in  \R^3$ and 
$m$
be a sufficiently smooth away from the origin $ B(\X_1,\X_2,\X_3)$-valued function,  defined on the hyperplane ${\cic{1}}^\perp$. For a triple of Schwartz functions $f_j\in \mathcal S(\R;\X_j)$ $j=1,2,3$, we define the trilinear form
\begin{equation}
\label{tr}
\Lambda_m(f_1,f_2,f_3) =\int\displaylimits_{\xi \in \cic{1}^\perp} m(\xi_1,\xi_2,\xi_3) \big[ \widehat f_1(\xi_1),\widehat f_2(\xi_2), \widehat f_3(\xi_3)\big]\,\d\xi.
\end{equation}
To the  trilinear form $\Lambda_m$, we associate three dual bilinear operators $T^\sigma_m$ given by
\begin{equation}
\label{bi}
\int_{\R} \l T^\sigma_m(f_{\sigma(1)}, f_{\sigma(2)}) (x), f_{\sigma(3)}(x) \r \,\d x = 
\Lambda_m(f_1,f_2,f_3)
\end{equation}
each mapping   $ \mathcal{S}(\X_{\sigma(1)}) \times \mathcal{S}(\X_{\sigma(2)})  $  into $\mathcal{S}(\X_{\sigma(3)}')$.
We say that  $m$ is a $\mathcal R$-bounded Fourier multiplier form if
\begin{equation}
\label{derivatives}
\mathcal R_{ \X_{1 },\X_{2},\X_{3}}
\left(\big\{|\xi|^{|\alpha|} \partial_\xi^\alpha m(\xi) : \xi \in \cic{1}^\perp\big\} \right) \leq C
\end{equation}
for all multi-indices $|\alpha|\leq N$, with $N$ sufficiently large. Here, the  derivatives are taken on the hyperplane $\xi \in \cic{1}^\perp$. The condition \eqref{derivatives} can be thought of as a generalization of $\mathcal R$-boundedness of bilinear Fourier multiplier forms (i.e.\ linear Fourier multiplier operators) to the multilinear case. We are ready to state our main result.
\begin{theorem} \label{CMT} Assume, in the above setting, that each Banach space  $\X_j$ has the UMD property, and furthermore, for all cyclic permutations $\sigma$, $\X_{\sigma(3)}$ has the RMF property  corresponding to the identification \eqref{trcontfm}.
Assume that $m$ is a  $\mathcal R$-bounded Fourier multiplier form as in \eqref{derivatives}.
Then each of the bilinear operators $T^\sigma_m$ extends to a bounded bilinear operator
\begin{align*} &
T^\sigma_m: L^{p_1}(\R; \X_{\sigma(1)}) \times L^{p_2}(\R; \X_{\sigma(2)}) \to L^{r}(\R; \X_{\sigma(3)}'),   \\ & \textstyle \frac{1}{p_1} + \frac{1}{p_2} = \frac 1r, \; 1<p_1,p_2 \leq \infty, \, r<\infty; \\
&
T^\sigma_m: L^{1}(\R; \X_{\sigma(1)}) \times L^{1}(\R; \X_{\sigma(2)}) \to L^{\frac12,\infty}(\R; \X_{\sigma(3)}'),
\end{align*}
with operator norms depending only on the $\mathcal R$-bound \eqref{derivatives} of $m$, on the UMD and RMF character of the spaces $\X_j,j=1,2,3,$ and on the exponents $p_1,p_2$.
\end{theorem}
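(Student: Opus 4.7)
The plan is to reduce Theorem \ref{CMT} to Proposition \ref{thmmodel} via a Coifman--Meyer-style decomposition of the multiplier symbol $m$ on the hyperplane $\cic{1}^\perp$. Fix the cyclic permutation $\sigma$ to be the identity for notational clarity; the other two cases are handled identically, using the cyclic RMF assumption to place the ``low-frequency'' factor on each of $\X_1, \X_2, \X_3$ in turn. Pick a smooth homogeneous-of-degree-zero partition $1 = \chi^{(1)} + \chi^{(2)} + \chi^{(3)}$ on $\cic{1}^\perp \setminus \{0\}$ with each $\chi^{(j)}$ supported where $|\xi_j| \leq \tfrac{1}{10}\max_{\ell \neq j}|\xi_\ell|$; splitting $\Lambda_m = \sum_j \Lambda_m^{(j)}$ accordingly, it suffices by symmetry to treat $\Lambda_m^{(3)}$, which will be realized as a superposition of model paraproducts of mean-zero type $\pi = 3$.

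Second, introduce a smooth dyadic resolution $1 = \sum_k \psi_k$ on $\cic{1}^\perp \setminus \{0\}$ with $\psi_k$ supported in $\{|\xi| \sim 2^k\}$. On each set $\{|\xi| \sim 2^k\} \cap \mathrm{supp}\,\chi^{(3)}$ the coordinates $\xi_1, \xi_2$ lie in a fixed annulus of size $2^k$ bounded away from the origin, while $\xi_3$ ranges through a ball of radius $O(2^k)$. Developing the restriction of $m\,\psi_k\,\chi^{(3)}$ to a cube of sidelength $\sim 2^k$ in a Fourier series produces an expansion of tensor-product form
\[
m(\xi)\psi_k(\xi)\chi^{(3)}(\xi) = \sum_{\cic n \in \Z^3} c_{k,\cic n}\,\phi^{1}_{\cic n}(2^{-k}\xi_1)\,\phi^{2}_{\cic n}(2^{-k}\xi_2)\,\phi^{3}_{\cic n}(2^{-k}\xi_3),
\]
where $\phi^{1}_{\cic n}, \phi^{2}_{\cic n}$ are smooth bumps supported in a fixed annulus away from $0$ (hence whose inverse Fourier transforms are mean-zero Schwartz functions in a bounded subset of $\cic{\Phi}$), $\phi^{3}_{\cic n}$ is a smooth bump supported near the origin (inverse transform in $\cic{\Phi}$), and the coefficients $c_{k,\cic n} \in B(\X_1, \X_2, \X_3)$ are the operator-valued Fourier coefficients of $m\,\psi_k\,\chi^{(3)}$.

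The main technical obstacle is to show that, for any $M \in \N$ with $2M \leq N$,
\[
\mathcal R_{\X_1, \X_2, \X_3}\bigl(\{(1 + |\cic n|)^{M}\,c_{k,\cic n} : k \in \Z,\ \cic n \in \Z^3\}\bigr) \leq C_M,
\]
which is the $\mathcal R$-valued analogue of the usual rapid-decay estimate on Fourier coefficients. The argument combines repeated integration by parts in the Fourier coefficient integral (which introduces factors of $|\xi|^{|\alpha|}\partial_\xi^\alpha m$) with the hypothesis \eqref{derivatives} and the key structural property that the multilinear $\mathcal R$-bound in \eqref{Rlambda2} is preserved under convex-combination-type averaging against $L^1$ densities on $\cic{1}^\perp$. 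Unlike the linear theory, where uniform operator bounds tensorize cleanly, here the multilinear $\mathcal R$-bound has to be tracked through the decomposition by hand using the framework of Section \ref{SecRbound}; this is the step I expect to require the most care.

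Plugging the expansion into \eqref{tr}, undoing the Fourier transforms (with the inverse transforms of the $\phi^{j}_{\cic n}$ playing the role of the $\phi^j \in \cic{\Phi}$), and recognizing the dyadic sum over $k$ as a discretization of a continuous scale parameter $t$, each $\cic n$-term becomes, up to absolute constants, a model paraproduct form $\mathsf{PP}^{3}_{\mathsf w_{\cic n}}$ with weight
\[
\mathsf w_{\cic n}(u,t) = c_{-\lfloor \log_2 t \rfloor,\cic n}
\]
independent of $u$. The $\mathcal R$-boundedness estimate above yields condition \eqref{ppc} for $\mathsf w_{\cic n}$ with constant $\lesssim (1 + |\cic n|)^{-M}$, and Proposition \ref{thmmodel} (invoking UMD of $\X_1, \X_2$ and RMF of $\X_3$) then produces the full range of strong- and weak-type bounds for $\mathsf{PP}^{3}_{\mathsf w_{\cic n}}$ with the same constant. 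Choosing $M$ larger than $3$ renders the sum over $\cic n$ absolutely convergent, which finishes the estimate on $T^{\sigma}_m$ for $\sigma$ the identity via \eqref{bi}; the remaining two permutations are handled by the same procedure after relabeling which of the three spaces plays the role of the ``output'' and invoking the RMF assumption on $\X_1, \X_2$ respectively.
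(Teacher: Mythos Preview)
Your proposal is correct and follows the classical Coifman--Meyer route: frequency-side cone partition, dyadic Littlewood--Paley resolution, and Fourier series expansion of the localized symbol, yielding a discrete sum of model paraproducts indexed by the Fourier mode $\cic n$. The paper's argument (Section \ref{Secdec}) takes a genuinely different path: it applies Calder\'on's reproducing formula to each $f_j$ on the \emph{spatial} side, sorts the resulting triple integral over $(\R^{d+1}_+)^3$ by the relative sizes of the three scales, and performs the change of variables $(y,t)=(x+\alpha s,\beta s)$, $(z,u)=(x+\gamma s,\delta s)$ to obtain a \emph{continuous}-parameter average of model paraproducts with operator-valued weight $\mathsf w_{x,s}^{\alpha,\gamma,\delta}$ defined by pairing $\Lambda_m$ against translated wave packets. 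The $\mathcal R$-bound \eqref{ppc} with decay $(1+|\alpha|+|\gamma|)^{-N}$ then falls out in one line from \eqref{derivatives} and wave-packet decay, bypassing your Fourier-coefficient estimate entirely.

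What each buys: the paper's route matches the continuous model paraproducts of Proposition \ref{thmmodel} exactly as stated, and the $\mathcal R$-bound on the weight is immediate. Your route is more modular and closer to the textbook scalar argument, but you must (i) verify that the trilinear $\mathcal R$-bound of \eqref{Rlambda2} is stable under $L^1$ averaging (it is, since by definition \eqref{Rlambda} it reduces to a bilinear $\mathcal R$-bound for each fixed $x_3$, and Lemma \ref{Rbounds} applies), and (ii) reconcile the discrete sum over $k$ with the continuous $\int\frac{\d t}{t}$. One imprecision worth fixing: if your bumps $\phi^j_{\cic n}$ carry the Fourier-series exponential $e^{in_j\xi_j}$, their inverse transforms are \emph{translates} by $n_j$ of a fixed bump and hence not uniformly in $\cic\Phi$. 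The correct bookkeeping is to keep the bumps fixed and load the translation into the $\alpha_j$ parameter of the embedding maps $F_{\phi,\alpha,\beta}$; the constants in Proposition \ref{thmmodel} then grow only polynomially in $|\alpha_j|$ (via the aperture dependence in Propositions \ref{cet} and \ref{ovcarlemb}), which your rapid decay of $c_{k,\cic n}$ absorbs --- exactly parallel to the paper's integration over $\alpha,\gamma\in\R^d$ against the weight decay.
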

We recall that the RMF property has been defined in Subsection \ref{ssRMF}. Theorem \ref{CMT} is obtained from the model paraproduct bounds of Proposition \ref{thmmodel} by means of a standard Littlewood-Paley type decomposition argument. We sketch the details of this decomposition procedure in Section \ref{Secdec}.  \begin{remark} The derivation of Corollary \ref{introthm1} from Theorem \ref{CMT} is immediate. Indeed,  let $\X$ be a UMD space and  set $\X_1=\X,\X_2=\X',\X_3=\mathbb C$, with obvious identification \eqref{trcontfm}. Assume that both $\X$ and $\X'$ have the RMF property. If the $\mathcal{L}(\X)$-valued  multiplier $\mathsf{m}$ is as in  Corollary \ref{introthm1},  the multiplier form-valued function
$$
 (x,x',\lambda ) \in \X\times \X'\times \mathbb C \mapsto  m(\xi) [x,x',\lambda]= \lambda \l\mathsf m(\xi) x,x'\r
 $$
 satisfies assumption \eqref{derivatives} with constant equal to twice the $\mathcal R$-bound \eqref{Rboundsop}. Therefore, Corollary \ref{introthm1} follows from an application of Theorem \ref{CMT} to $\Lambda_m$. Note that  the UMD and RMF, from Lemma \ref{lemmaRMF2} (4),  assumptions are satisfied when $\X=L^p(\mathcal A, \tau)$,  the noncommutative $L^p$ spaces on a von Neumann algebra $\A$ equipped with a normal, semifinite, faithful trace $\tau$ and $1<p<\infty$. 
\end{remark}
\begin{remark}
Let us consider a triple of Banach lattices as in Paragraph \ref{sslattices}. In this setting, assuming that $\X_j$ is UMD implies automatically that each $\X_j$ has the RMF property (see Lemma \ref{lemmaRMF2}). Therefore, we are able to apply Theorem \ref{CMT} replacing the $\mathcal R$-bound \eqref{derivatives} with the lattice-type condition \eqref{Rclarify}. Corollary \ref{introthm2}, which deals with the case $\X_j=L^{q_j}(\mathcal \nu)$, is then obtained by reducing \eqref{Rclarify} further to \eqref{Rclarify2}.
\end{remark}
 
  \begin{proof}[Proof of Corollary \ref{BHT}] We prove the case where $\mathrm{dim} \,\Gamma_1=1$, the other case being identical. We recall the following result  \cite[Theorem 1.7]{Silva12}. Let $\mathcal M$ be a family of complex functions defined on $\cic{1}^\perp\subset \R^3$, smooth away from the origin and satisfying the bound
\begin{equation}
\label{bhtbdd1}
\sup_{\mathsf m \in \mathcal M}\sup_{ \eta\in \cic{1}^\perp }  \dist(\eta,\Gamma_1)^{|\beta|} \big|\partial^\beta_\eta \mathsf m( \eta)\big| \leq C_{\mathcal M}, \qquad  |\beta| \leq N
\end{equation}
Then, there holds, for each H\"older triplet with $1<q_j<\infty$ as in the statement of the corollary, 
$$
\sum_{j=1}^n\Lambda_{\mathsf{m}_j} (g_1^j, g_2^j, g_3^j) \lesssim C_{\mathcal M} \|\{g_{\sigma(1)}^j\}\|_{L^{q_1}(\R;\ell^2)}\|\{g_{\sigma(2)}^j\}\|_{L^{q_2}(\R;\ell^2)} \|\{g_{\sigma(3)}^j\}\|_{L^{q_3}(\R;\ell^\infty)}
$$
uniformly over $n$, over choices $\mathsf{m}_j \in \mathcal M$ and over permutations $\sigma$. Now, given $m$   as in \eqref{mpest}, we define for   $ \xi \in  \cic{1}^\perp$ and (initially) $g_j\in \mathcal S (\R)$, $j=1,2,3$
$$\mathsf{m}(\xi) [g_1,g_2,g_3] = \Lambda_{m(\xi,\cdot)} (g_1, g_2,g_3)=  \int\displaylimits_{\eta_1+\eta_2+\eta_3=0} m(\xi, \eta) \widehat g_1( \eta_1)\widehat g_2( \eta_2)\widehat g_3(\eta_3)\,   \d \eta.
$$
We finally learn from \eqref{bhtbdd1} and assumption \eqref{mpest} that the family 
\begin{equation}
\label{bht3}
\mathcal M:= \left\{\mathsf{m}(\xi), |\xi|^\alpha \partial_\xi^\alpha \mathsf{m}(\xi): \xi \in \cic 1^\perp, |\alpha |\leq N \right\} 
\end{equation}
satisfies \eqref{condLqj} with $C_{\mathcal M} \leq K$.

 We move to the core of the proof. By mixed norm interpolation (see \cite{BenPan61} and the recent account \cite[Section 3]{Silva12}), it suffices to prove the weak-type bound with $p_1=p_2=1$. Fix $f_j \in L^1(\R; L^{q_j}(\R)) $, $j=1,2$ of unit norm. 
 We generally denote
$$
F_j: \R \to L^{q_j}(\R), \qquad F_j(x) = \left\{ y \mapsto f_j(x,y) \right\}.
$$ Fix  $E_3\subset \R$. By virtue of $\mathcal M$ from \eqref{bht3} satisfying \eqref{condLqj} with $C_\mathcal M=K$, we can apply (the dual form of) Corollary \ref{introthm2} and   learn that there exists a suitable major subset $E'_3$ of $E_3$ such that for all  functions $f_3$ with $\|F_3(x)\|_{q_3} \leq \cic{1}_{E_3'}(x)$, we have,  
$$
|\Lambda_{m}(f_1,f_2,f_3)| = |\Lambda_{\mathsf m}(F_1,F_2,F_3)| \lesssim |E_3|^{-1}   K
$$
which  is exactly the generalized restricted weak-type $(1,1,-1)$ bound for  the form $\Lambda_m$ dual to $T$. This completes the proof.
\end{proof}

\section{Operator-valued $T(1)$ theorems in one and multiple parameters} \label{T1secstat}

In this section we collect the main results pertaining operator-valued $T(1)$-type theorems we described in the introduction. We stress that  these results are obtained within the Banach-valued outer measure theory framework of Section \ref{SecOM}. Throughout this section, $\X_1,\X_2$ will be UMD Banach  spaces over $\mathbb C$.

\subsection{$T(1)=0$ theorems in one and multiple parameters}
 Let $\phi:\R \to \R$ be a fixed smooth  function  with $\widehat\phi(0)=0$ and define for $z=(x,s)\in \R^2_+$\[  \phi_{z}=\phi_{x,s}=s^{-1}\phi(s^{-1}(\cdot-x)).
\]
We have the following theorem. The proof is given in Section \ref{SecT1pf}.
\begin{theorem}[$T(1)=0$ theorem]\label{onepara}
Let $\Lambda: \mathcal{S}(\R;\X_1)\times \mathcal{S}(\R;\X_2)\rightarrow \mathbb C$ be a bilinear form. For any fixed $z=(x,s),w=(y,t)\in\R^2_+$, define $Q_{z,w}:\X_1\times\X_2\rightarrow \mathbb C$ as
\[
Q_{z,w}(\xi,\eta):=A(z,w)\Lambda(\phi_{z}\xi,\phi_{w}\eta),\quad\forall\,\xi\in\X_1,\eta\in\X_2,
\]
where
\begin{equation}
\label{Adef}
A(z,w)=A(x,s,y,t):=\frac{\max(s,t,|x-y|)^2}{\min(s,t)}.
\end{equation}
Assume  the randomized boundedness condition 
\begin{equation}\label{newwbp}
\mathcal{R}_{\X_1,\X_2}(\{Q_{z,w}: \,z,w\in\R^2_+\})=K<\infty.
\end{equation}
Then, for $1<p<\infty$, $f_j\in \mathcal S(\R;\X_j)$, $j=1,2$,
\[
|\Lambda(f_1,f_2)|\leq C_pK\|f_1\|_{L^p(\R;\X_1)}\|f_2\|_{L^{p'}(\R;\X_2)},
\]
for some constant $C_p$ depending only on $\phi$, $p$ and on the UMD character of $\X_1,\X_2$.\end{theorem}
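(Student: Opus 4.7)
The plan is to use Calder\'on's reproducing formula to embed the bilinear form into an integral over the product $\R^2_+\times\R^2_+$, then reduce this double integral to a sum of model paraproducts on a single upper half plane, each handled by Proposition \ref{thmmodel}.

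\emph{Calder\'on decomposition.} Pick an auxiliary Schwartz function $\psi$ with $\widehat\psi(0)=0$ so that the pair $(\phi,\psi)$ is Calder\'on-admissible, giving $f = c\int_{\R^2_+}(f*\psi_s)(x)\,\phi_{x,s}\,\frac{dx\,ds}{s}$ for $f\in\mathcal S(\R)$. Applying this to $f_1$ and $f_2$ and using the continuity of $\Lambda$ on Schwartz tensor representations yields
$$
\Lambda(f_1,f_2) = c^2\iint_{\R^2_+\times\R^2_+}\frac{Q_{z,w}\bigl(F_1(z),F_2(w)\bigr)}{A(z,w)}\,\frac{dz\,dw}{st},
$$
where $F_j(z)=(f_j*\psi_s)(x)\in\X_j$ for $z=(x,s)$. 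Absolute convergence of the double integral is a direct consequence of $K<\infty$ tested on normalized bumps.

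\emph{Dyadic decomposition and paraproduct reduction.} Split $\R^2_+\times\R^2_+$ by which of $s,t,|x-y|$ dominates; by symmetry it suffices to treat $\{s\leq t\}$, further partitioned into the dyadic shells $|x-y|\sim 2^k t$, $k\geq 0$. On shell $k$ one has $A(z,w)\sim 2^{2k}t^2/s$, so the weight $\frac{1}{A(z,w)st}$ is of size $2^{-2k}t^{-3}$. Freezing the outer variable $w=(y,t)$ and integrating out $z$ over the shell-$k$ truncated cone, the $\mathcal R$-boundedness hypothesis \eqref{newwbp} together with Kahane's contraction principle allows us to collapse the continuous $z$-integration against $F_1$ into the evaluation of a single bilinear form $\mathsf w^{(k)}(y,t)\bigl[G_1^{(k)}(y,t),F_2(y,t)\bigr]$, where $G_1^{(k)}(y,t)\in\X_1$ is a wavelet-type embedding of $f_1$ at scale $t$ near $y$ with aperture $2^k$, and the family $\{\mathsf w^{(k)}(y,t):(y,t)\in\R^2_+\}\subset B(\X_1,\X_2)$ is $\mathcal R$-bounded with constant $\lesssim K$, uniformly in $k$.

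\emph{Application of Proposition \ref{thmmodel} and summation.} Each shell-$k$ contribution $I^{(k)}$ is now a model paraproduct form of the type appearing in Proposition \ref{thmmodel}, with $\X_3=\mathbb C$ (trivially RMF) and aperture parameter $2^k$. Invoking the proposition with H\"older tuple $(p,p',\infty)$ yields
$$
|I^{(k)}| \lesssim K\cdot 2^{-2k}\,\|f_1\|_{L^p(\R;\X_1)}\,\|f_2\|_{L^{p'}(\R;\X_2)},
$$
where the polynomial-in-$2^k$ dependence of the paraproduct constant is absorbed by the geometric weight $2^{-2k}$. Summing over $k\geq 0$ and the symmetric region $\{s>t\}$ completes the proof.

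\emph{Main obstacle.} The crux is the $\mathcal R$-bound transfer in the second step: passing from the two-parameter family $\{Q_{z,w}\}$ to the one-parameter paraproduct weights $\{\mathsf w^{(k)}(y,t)\}$. One must collapse the continuous $z$-integral against $F_1(z)$ into a single $\X_1$-vector evaluation while preserving the randomized estimate. Kahane's contraction principle, applied to the averaging against a probability measure supported on the shell-$k$ cone, supplies the collapse, but careful bookkeeping of the aperture $2^k$ and of the $\phi$-based reconstruction identifying $G_1^{(k)}(y,t)$ as a legitimate wavelet-type embedding of $f_1$ is delicate. An alternative route, likely closer to what is carried out in Section \ref{SecT1pf}, is to bypass Proposition \ref{thmmodel} and instead argue directly within the Banach-valued outer $L^p$ framework: one applies an outer H\"older inequality on $\R^2_+\times\R^2_+$ (against the measure $\frac{dz\,dw}{A(z,w)st}$) and then invokes the vector-valued Carleson embedding theorems of Sections \ref{SecLCET} and \ref{SecRMF} to bound the outer $L^p$ norms of the embeddings $F_j$ by $\|f_j\|_{L^p(\R;\X_j)}$.
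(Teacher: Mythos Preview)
Your proposal contains a genuine gap in the ``collapse'' step, while the alternative route you sketch at the end is essentially what the paper actually does.

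The problematic step is the claim that, after freezing $w=(y,t)$, the integral $\int Q_{z,w}(F_1(z),F_2(w))\,d\mu(z)$ over a shell-$k$ cone can be rewritten as $\mathsf w^{(k)}(y,t)[G_1^{(k)}(y,t),F_2(y,t)]$ with $G_1^{(k)}$ a wavelet-type embedding of $f_1$ and $\{\mathsf w^{(k)}(y,t)\}$ $\mathcal R$-bounded. Kahane's contraction principle does not supply this. The integrand $Q_{z,w}(F_1(z),\eta)$ depends on $z$ both through the bilinear form $Q_{z,w}$ and through the vector $F_1(z)\in\X_1$; these two dependencies are intertwined and cannot be factored into a single vector $G_1^{(k)}(w)$ paired with a single form $\mathsf w^{(k)}(w)$. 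One can certainly write $\int Q_{z,w}(F_1(z),\eta)\,d\mu(z)=\Lambda(h_w,\phi_w\eta)$ with $h_w=\int A(z,w)\phi_zF_1(z)\,d\mu(z)$, but $h_w$ is a genuinely $\X_1$-valued Schwartz function, not a scalar bump tensored with a fixed vector; there is no reason for it to be of the form $\phi'_w\, G_1(w)$. The averaging property of $\mathcal R$-bounds (Lemma \ref{Rbounds}) would let you control the $\mathcal R$-bound of an averaged \emph{form}, but here the averaging mixes form and argument simultaneously.

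The paper's proof in Section \ref{SecT1pf} is your alternative route: after Calder\'on, one performs the continuous change of variables $y=x+\alpha s$, $t=\beta s$ (in the region $r=s\ge t$) or $s=\gamma r$, $t=\beta r$ (when $r=|y-x|$), so that for each fixed $(\alpha,\beta)$ both embeddings are evaluated at the \emph{same} base point, the tilting being absorbed into $F_{\phi,\alpha,\beta}(g)$. The inner integral then reads $\int_{\R^2_+}\Gamma^{\alpha,\beta}_{(x,s)}(F_\phi(f)(x,s),F_{\phi,\alpha,\beta}(g)(x,s))\,\frac{\d x\,\d s}{s}$, where the family $\{\Gamma^{\alpha,\beta}_{(x,s)}\}_{(x,s)}$ is $\mathcal R$-bounded by $K$ directly from \eqref{newwbp}. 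This is handled by the bilinear outer H\"older inequality Proposition \ref{outerCS} together with the lacunary Carleson embedding \eqref{LqSqemb}, which contributes a factor $\beta^{-\eps}$ (resp.\ $\gamma^{-\eps}\beta^{-\eps}$) that is integrable over the parameter region. Proposition \ref{thmmodel} is never invoked: in this bilinear setting both embeddings are mean-zero, so only lacunary sizes appear and no RMF is needed. Incidentally, routing through Proposition \ref{thmmodel} with $f_3\equiv 1$ would also be awkward because the proposition is stated for $f_j\in\mathcal S(\X_j)$, which a constant function is not.
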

\begin{remark} The conclusion of the above theorem implies that $\Lambda$ satisfying \eqref{newwbp} extends, by density, to an element of $B(L^p(\R;\X_1),L^{p'}(\R;\X_2) )$, for all $1<p<\infty$. A similar remark can be made for Theorems \ref{bipara}, \ref{FullT1thm}, and \ref{biparafull} below.
\end{remark}
\begin{remark}[Comparison to other $T(1)=0$ theorems] Let  $T$ be  the (distributional) dual operator to the form $\Lambda$ appearing in
Theorem \ref{onepara}. Our result can be used to recover the  operator-valued $T(1)$ theorem (on the real line) in its usual formulation, in the paraproduct-free case. Results of this type  date back to \cite{HytWeis08}, while a more  recent formulation is given in \cite{HytHann14}. The typical assumptions of these theorems include a $\mathcal R$-bounded version of the standard kernel estimates and of the  weak boundedness property, besides $T(1)=T^*(1)=0$. It is not hard to check that our condition (\ref{newwbp}) follows from these two assumptions. Indeed, when $|x-y|$ is much larger than both $s$ and $t$, (\ref{newwbp}) can be deduced from the (randomized) standard kernel estimates. When $s$ or $t$ dominates, (\ref{newwbp}) follows from the kernel estimates combined with the condition $T(1)=T^*(1)=0$. The other cases can be obtained using the (randomized) weak boundedness property. 
 \end{remark}

{{ 
\begin{remark}
If we define $Q_{z,w}(\xi,\eta)$ using
\[
A^\delta (z,w)=A^{\delta}(x,s,y,t):=\frac{\max(s,t,|x-y|)^{1+\delta}}{\min(s,t)^{\delta}}
\]
instead, then it can be easily checked that Theorem \ref{onepara} holds true for all $1/2\leq\delta\leq 1$. Moreover, this range can be further extended to $0<\delta\leq 1$ if $\X_1$, $\X_2$   both   have type $2$. Similar remarks apply  for Theorems \ref{bipara}, \ref{FullT1thm}, and \ref{biparafull} below.
\end{remark}
}}

We now iterate Theorem \ref{onepara}   to obtain the following mixed norm, bi-parameter operator-valued $T(1)$ theorem.   The assumption of Pisier's property $(\alpha)$ on the Banach spaces, see Lemma \ref{lemmaalpha}, is necessary, as it can be inferred from \cite{HytPor08}, where mixed norm boundedness results for multi-parameter convolution-type singular integrals and pseudodifferential operators have been established. The theorem obviously extends to arbitrarily many parameters.

Let $z_j=(x_j,s_j)\in \R^2_+,$ $j=1,2,$  $\phi_{z_1}$ be the same as above, and $\psi_{z_2}$ be another bump function in the second variable with the same properties.

\begin{theorem}[Bi-parameter $T(1)=0$]\label{bipara}
Let $\X_1,\X_2$ be UMD spaces with {{property ($\alpha$)}}. 
Let $$\Lambda: \mathcal{S}(\R)\otimes\mathcal{S}(\R;\X_1)\times \mathcal{S}(\R)\otimes \mathcal{S}(\R;\X_2)\rightarrow \mathbb C$$ be a bilinear form. For any fixed $z_1 ,z_2 ,w_1,w_2 \in\R^2_+$, define $Q_{z_1,z_2,w_1,w_2}:\X_1\times\X_2\rightarrow \mathbb C$ as
\[
Q_{z_1,z_2,w_1,w_2}(\xi,\eta):=A(z_1,w_1)A(z_2,w_2)\Lambda(\phi_{z_1}\otimes\psi_{z_2}\xi,\phi_{w_1}\otimes \psi_{w_2}\eta),
\]
for $\xi\in\X_1,\eta\in\X_2.$
Assume that
\begin{equation}\label{biparanewwbp}
\mathcal{R}_{\X_1,\X_2}(\{Q_{z_1,z_2,w_1,w_2},\,z_1,z_2,w_1,w_2\in\R^2_+\})=K<\infty.
\end{equation}
Then, for $1<p,q<\infty$,  $f_j\in  \mathcal{S}(\R)\otimes \mathcal{S}(\R;\X_j)$, $j=1,2$, 
\[
|\Lambda(f_1,f_2)|\leq C_{p,q}K\|f_1\|_{L^p(\R;L^q(\R;\X_1))}\|f_2\|_{L^{p'}(\R;L^{q'}(\R;\X_2))},
\]
for some constant $C_{p,q}$ depending only on $\phi$, $\psi$, $p$, $q$ and on the UMD character of $\X_1,\X_2$.
\end{theorem}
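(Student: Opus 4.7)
The plan is to iterate Theorem \ref{onepara}, peeling off one variable at a time. Set $\mathcal{Y}_1 := L^q(\R;\X_1)$ and $\mathcal{Y}_2 := L^{q'}(\R;\X_2)$; since $\X_j$ are UMD with property $(\alpha)$ and $1<q<\infty$, both $\mathcal{Y}_j$ inherit these two properties (UMD and $(\alpha)$ are stable under Bochner $L^q$). Identifying a Schwartz tensor $f \otimes g\,\xi \in \mathcal{S}(\R)\otimes\mathcal{S}(\R;\X_j)$ with the $\mathcal{Y}_j$-valued Schwartz function $x_1 \mapsto f(x_1)(g\xi)$, the form $\Lambda$ induces a bilinear form $\tilde\Lambda:\mathcal{S}(\R;\mathcal{Y}_1)\times\mathcal{S}(\R;\mathcal{Y}_2)\to\mathbb{C}$, and the target estimate reduces to the one-parameter bound $|\tilde\Lambda(F_1,F_2)|\lesssim \|F_1\|_{L^p(\R;\mathcal{Y}_1)}\|F_2\|_{L^{p'}(\R;\mathcal{Y}_2)}$. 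Applying Theorem \ref{onepara} to $\tilde\Lambda$ in the first variable, we are reduced to verifying the one-parameter randomized condition
\[
\mathcal R_{\mathcal{Y}_1,\mathcal{Y}_2}\big(\{A(z_1,w_1)\tilde\Lambda(\phi_{z_1}\Xi,\phi_{w_1}H):z_1,w_1\in\R^2_+\}\big)\lesssim K. \quad (*)
\]

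To verify $(*)$, fix finite sequences $(z_1^j,w_1^j)_{j=1}^n$ and Schwartz $\Xi^j \in \mathcal{S}(\R;\X_1)$, $H^j \in \mathcal{S}(\R;\X_2)$; by Kahane--Khintchine and Fubini one has the lattice identification $\mathrm{Rad}(L^{q_j}(\R;\X_j))\simeq L^{q_j}(\R;\mathrm{Rad}(\X_j))$, so $(*)$ is equivalent to showing that the auxiliary bilinear form
\[
\Omega_{\mathbf z_1,\mathbf w_1}\Big(\sum_j r_j\Xi^j,\sum_j r_j H^j\Big):=\sum_j A(z_1^j,w_1^j)\Lambda(\phi_{z_1^j}\otimes\Xi^j,\phi_{w_1^j}\otimes H^j),
\]
viewed on $\mathcal S(\R;\mathrm{Rad}(\X_1))\times\mathcal S(\R;\mathrm{Rad}(\X_2))$, extends to a bounded bilinear form $L^q(\R;\mathrm{Rad}(\X_1))\times L^{q'}(\R;\mathrm{Rad}(\X_2))\to \mathbb C$ with norm $\lesssim K$ \emph{uniformly} in $\mathbf z_1,\mathbf w_1$. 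Since UMD is inherited by closed subspaces and $\mathrm{Rad}(\X_j)\subset L^2([0,1);\X_j)$ is UMD, Theorem \ref{onepara} can be invoked a second time, now in the second variable with underlying UMD spaces $\mathrm{Rad}(\X_1),\mathrm{Rad}(\X_2)$ and target exponent $q$, yielding precisely the bound on $\Omega_{\mathbf z_1,\mathbf w_1}$ needed above.

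The hypothesis of this second application amounts to a uniform (in $\mathbf z_1,\mathbf w_1$) $\mathcal R_{\mathrm{Rad}(\X_1),\mathrm{Rad}(\X_2)}$-bound for the family $\{A(z_2,w_2)\,\Omega_{\mathbf z_1,\mathbf w_1}(\psi_{z_2}\cdot,\psi_{w_2}\cdot):z_2,w_2\in\R^2_+\}$. Expanding both the definition of $\Omega_{\mathbf z_1,\mathbf w_1}$ and the randomization on $\mathrm{Rad}(\X_j)$-valued inputs $v^k=\sum_j r_j\xi^{j,k}$, $u^k=\sum_j r_j\eta^{j,k}$, this becomes
\[
\Big|\sum_{j,k}Q_{z_1^j,z_2^k,w_1^j,w_2^k}(\xi^{j,k},\eta^{j,k})\Big|\lesssim K\Big(\E_{\omega,\omega'}\Big\|\sum_{j,k}r_j r'_k\xi^{j,k}\Big\|_{\X_1}^2\Big)^{\frac12}\Big(\E_{\omega,\omega'}\Big\|\sum_{j,k}r_jr'_k\eta^{j,k}\Big\|_{\X_2}^2\Big)^{\frac12},
\]
which is precisely the doubly-indexed reformulation of \eqref{biparanewwbp}, available thanks to Lemma \ref{lemmaalpha} together with property $(\alpha)$ for $\X_1,\X_2$ (the latter providing the equivalence between single- and double-Rademacher randomizations). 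Chaining the two applications of Theorem \ref{onepara} completes the proof, with constants depending only on $p,q,K$, the UMD/$(\alpha)$-characters of $\X_j$, and the bump functions $\phi,\psi$. The main obstacle is the careful bookkeeping for the nested randomizations: propagating $\mathcal R$-bounds cleanly through the two identifications $\mathrm{Rad}(L^q(\X))\simeq L^q(\mathrm{Rad}(\X))$ and the single/double-Rademacher equivalence, while keeping all constants independent of the testing sequences.
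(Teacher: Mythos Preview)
Your proof is correct and follows essentially the same route as the paper: apply Theorem \ref{onepara} in the outer variable with target spaces $L^q(\R;\X_j)$, then reduce the resulting $\mathcal R_{L^q(\R;\X_1),L^{q'}(\R;\X_2)}$-bound to the hypothesis \eqref{biparanewwbp} via a second application of Theorem \ref{onepara} on $\mathrm{Rad}(\X_j)$, using the identification $\mathrm{Rad}(L^q(\X))\simeq L^q(\mathrm{Rad}(\X))$ and property~$(\alpha)$ in the form of Lemma \ref{lemmaalpha}. The paper packages your second and third paragraphs as a separate lemma (Lemma \ref{propertyalpha}), but the content and the order of the ingredients are identical.
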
  

\begin{remark}[Comparison to other scalar-valued bi-parameter $T(1)$ theorems]
When $\Lambda$ is   associated with a scalar distribution kernel, Theorem \ref{bipara} recovers  the known $T(1)$ theorems in the bi-parameter setting with the additional assumption that $T$ is paraproduct free, i.e. $T$ together with all its partial adjoints map tensor products of $1$ with bump functions to $0$. For example, assume $T$ is as considered in \cite{PottVilla11} or \cite{Mar12}, satisfying mixed type standard kernel estimates, weak boundedness properties and is paraproduct free. Then, a similar argument as in the one-parameter setting shows that condition (\ref{biparanewwbp}) holds for bilinear form $\langle Tf,g\rangle$. {{The same holds true in the multi-parameter setting, i.e. our result in its multi-parameter version recovers the $T(1)$ theorem in \cite{Ou} for paraproduct free operators.}}
\end{remark}

\subsection{$T(1)$ theorems in one and multiple parameters}

The $T(1)=0$ type Theorems \ref{onepara} and \ref{bipara} can be combined with model  paraproduct estimates to obtain full $T(1)$-type results. Again, neither the conditions on the cancellative part nor the definition of $T(1)$ involve any assumption on the distributional kernel.
 
In the one-parameter setting, we can employ our model paraproduct Theorem \ref{thmmodel} to prove a fully operator-valued $T(1)$ theorem under a suitable assumption on the spaces involved. In the statement below, we let $\rho$ be a nonnegative Schwartz function  with $\rho(0)=1$, and $\phi$ be as in Theorem \ref{onepara}.   The proof is a simple combination of Theorem \ref{onepara} and the bound of Theorem \ref{thmmodel}.
\begin{theorem}\label{FullT1thm}
Let $\X_1,\X_2,\X_3$ be a triple of UMD Banach spaces coupled via an identification $\mathsf{id}$ as in \eqref{trcontfm}. Assume that both $\X_1$ and $\X_2$ have the nontangential RMF property with respect to the identifications \eqref{trcontfm}.
 Let $ \Lambda$ be a bilinear   form on  $\mathcal S(\R;\X_1) \times \mathcal S(\R;\X_2)  $. 
Assume that  the limits below exist and are finite for every $\xi_j \in \X_j$, $j=1,2,$ and $f\in \mathcal S(\R)$ and the equalities
\begin{equation}
\label{T1def}
\begin{split} &
\lim_{s\to \infty} \Lambda( \rho_{0,s} \xi_1, f \xi_2) = \int_{\R} \mathsf{id}(\xi_1,\xi_2, b(x)) f(x) \, \d x, \\ & \lim_{s\to \infty} \Lambda( f  \xi_1, \rho_{0,s}\xi_2) = \int_{\R} \mathsf{id}(\xi_1,\xi_2, b^\star(x)) f(x) \, \d x,
\end{split}
\end{equation}
hold for some $b,b^\star\in \mathrm{BMO}(\R; \X_3)$. Furthermore, assume that the bilinear continuous form
$$
\Lambda_0(f_1,f_2) = \Lambda(f_1,f_2) - \mathsf{PP}_{\mathsf{id}}^{1}(f_1,f_2,b) -\mathsf{PP}_{\mathsf{id}}^2(f_1,f_2,b^\star).
$$
satisfies the assumption \eqref{newwbp} of Theorem \ref{onepara} for a suitable choice of auxiliary functions $\phi^j$, $j=1,2,3$. Then, for all $1<p<\infty$, $\Lambda$ extends to a bounded bilinear form on ${L^p(\R;\X_1)}  \times {L^{p'}(\R;\X_2)}$, 
$$
|\Lambda(f_1,f_2)| \lesssim \|f_2\|_{L^p(\R;\X_1)}\|f_2\|_{L^{p'}(\R;\X_2)}.
$$
The implied constant depends on the size of \eqref{newwbp}, on $\|b\|_{\mathrm{BMO}(\R; \X_3)}$, $\|b^\star\|_{\mathrm{BMO}(\R; \X_3)}$, and on the UMD and RMF characters of $\X_1,\X_2, \X_3$.
\end{theorem}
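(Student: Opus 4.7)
The plan is to exploit the structural decomposition
\begin{equation*}
\Lambda(f_1,f_2) = \Lambda_0(f_1,f_2) + \mathsf{PP}^1_{\mathsf{id}}(f_1,f_2,b) + \mathsf{PP}^2_{\mathsf{id}}(f_1,f_2,b^\star)
\end{equation*}
which is given verbatim by the theorem's hypotheses, and to bound each of the three summands separately, using only Theorem \ref{onepara} and Proposition \ref{thmmodel}. The underlying philosophy is the classical one: the $T(1)$ and $T^\star(1)$ data, recorded in the BMO functions $b$ and $b^\star$, are the only obstructions to the $T(1)=0$ testing condition, and they are absorbed into model paraproducts.

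First I would apply Theorem \ref{onepara} directly to $\Lambda_0$. The hypothesis that $\Lambda_0$ satisfies \eqref{newwbp} is built into the assumption, so the one-parameter $T(1)=0$ theorem gives, for every $1<p<\infty$, the bound $|\Lambda_0(f_1,f_2)|\lesssim \|f_1\|_{L^p(\R;\X_1)}\|f_2\|_{L^{p'}(\R;\X_2)}$, with implicit constant controlled by the size of \eqref{newwbp} and the UMD characters of $\X_1,\X_2$.

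Next I would handle the two paraproducts by Proposition \ref{thmmodel} in its BMO endpoint form, with weight $\mathsf{w}\equiv \mathsf{id}$, for which the $\mathcal R$-boundedness condition \eqref{ppc} holds trivially with constant $1$. To estimate $\mathsf{PP}^1_{\mathsf{id}}(f_1,f_2,b)$, I choose a permutation $\sigma$ with $\sigma(3)=1$ and $3\in\{\sigma(1),\sigma(2)\}$; this places the BMO input $b\in \mathrm{BMO}(\R;\X_3)$ in a permuted slot where the endpoint exponent $\infty$ is allowed, and simultaneously forces $\sigma(3)=1$, so that the required nontangential RMF property of $\X_{\sigma(3)}=\X_1$ is precisely among the hypotheses of the theorem. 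The resulting estimate is
\begin{equation*}
|\mathsf{PP}^1_{\mathsf{id}}(f_1,f_2,b)|\lesssim \|f_1\|_{L^p(\R;\X_1)}\|f_2\|_{L^{p'}(\R;\X_2)}\|b\|_{\mathrm{BMO}(\R;\X_3)}.
\end{equation*}
The analogous recipe with $\sigma(3)=2$ treats $\mathsf{PP}^2_{\mathsf{id}}(f_1,f_2,b^\star)$, this time calling on the RMF property of $\X_2$. Summing the three contributions and invoking density of Schwartz tensors in $L^p(\R;\X_j)$ completes the argument.

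The proof is essentially a one-line assembly once Theorem \ref{onepara} and Proposition \ref{thmmodel} are in hand; the only genuinely non-routine point is the bookkeeping of the permutation hypotheses. Because the BMO endpoint in Proposition \ref{thmmodel} is available only on the first two permuted inputs, one necessarily has $\sigma(3)\in\{1,2\}$, and this is the exact reason that the RMF property is assumed of both $\X_1$ and $\X_2$ in the statement of the theorem. The matching is tight: no weaker RMF hypothesis would cover both $\sigma(3)=1$ and $\sigma(3)=2$ simultaneously.
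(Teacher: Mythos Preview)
Your proof is correct and matches the paper's approach exactly: the paper states only that ``the proof is a simple combination of Theorem \ref{onepara} and the bound of [Proposition] \ref{thmmodel},'' and your decomposition $\Lambda=\Lambda_0+\mathsf{PP}^1_{\mathsf{id}}(\cdot,\cdot,b)+\mathsf{PP}^2_{\mathsf{id}}(\cdot,\cdot,b^\star)$ together with the permutation bookkeeping (forcing $\sigma(3)\in\{1,2\}$ so the BMO input lands in a slot allowing $p=\infty$) is precisely that combination. Your observation that the RMF assumptions on $\X_1,\X_2$ are exactly what is needed for the two choices $\sigma(3)=1,2$ is also spot on.
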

\begin{remark}
Due to the need for the RMF assumption to control the continuous paraproducts, which seems unavoidable, Theorem \ref{FullT1thm} does not recover the most general operator-valued $T(1) \in $ $\mathrm{BMO}$-type theorems, see for instance the recent formulation of \cite{HytHann14}. Results in the spirit of \cite{HytHann14}, relying on the kernel dyadic representation theorems, proceed through bounds for paraproducts of discrete type and are not affected by this difficulty.

However, relying on point (4) of  Lemma \ref{lemmaRMF2}, we are able to tackle the case of $\X_1$ being a noncommutative $L^q$-space with $1<q<\infty$, $\X_2=\X_1'$, and $\X_3=\mathbb C$ (that is, $b$ is scalar valued). Also in view of Lemma \ref{lemmaRMF2}, Theorem \ref{FullT1thm} applies with no restriction in the UMD lattice setting of  Paragraph \ref{sslattices}.
\end{remark}

In the bi-parameter setting, Theorem \ref{bipara} together with bi-parameter paraproduct estimates implies a fully bi-parameter scalar-valued $T(1)$ theorem. In the following, $\rho^1,\rho^2$ are fixed nonnegative Schwartz functions with $\rho^i(0)=1,i=1,2$. And $\Lambda$ is a continuous bilinear form on $\mathcal{S}(\R)\otimes\mathcal{S}(\R)\times\mathcal{S}(\R)\otimes\mathcal{S}(\R)$. Assume that the limits below exist finite for every bump function $\phi^1,\phi^2\in\mathcal{S}(\R)$ with mean zero and the equalities
\begin{equation}
\label{biparaT1def1}
\begin{split}
&\lim_{s_1,s_2\rightarrow \infty}\Lambda(\rho^1_{0,s_1}\otimes\rho^2_{0,s_2},\phi^1\otimes\phi^2)=\int_{\R\times\R} b_1(x,y)\phi^1(x)\phi^2(y)\,\d x\d y,\\
&\lim_{s_1,s_2\rightarrow \infty}\Lambda(\phi^1\otimes\phi^2,\rho^1_{0,s_1}\otimes\rho^2_{0,s_2})=\int_{\R\times\R}b_2(x,y)\phi^1(x)\phi^2(y)\,\d x\d y,\\
&\lim_{s_1,s_2\rightarrow\infty}\Lambda(\rho^1_{0,s_1}\otimes \phi^2,\phi^1\otimes\rho^2_{0,s_2})=\int_{\R\times\R} b_3(x,y)\phi^1(x)\phi^2(y)\,\d x\d y,\\
&\lim_{s_1,s_2\rightarrow \infty}\Lambda(\phi^1\otimes\rho^2_{0,s_2},\rho^1_{0,s_1}\otimes\phi^2)=\int_{\R\times\R} b_4(x,y)\phi^1(x)\phi^2(y)\,\d x\d y
\end{split}
\end{equation}
hold for some $b_1,b_2,b_3,b_4\in  \mathrm{BMO}(\R\times\R)$, where this means Chang-Fefferman's product BMO \cite{CFBMO}. Moreover, assume that the limits below exist and are  finite for every bump function $\phi\in\mathcal{S}(\R)$ with mean zero and $f_1,f_2\in\mathcal{S}(\R)$ and the equalities
\begin{equation}
\label{biparaT1def2}
\begin{split}
&\lim_{s_1\rightarrow\infty}\Lambda(\rho^1_{0,s_1}\otimes f_1,\phi\otimes f_2)=\int_{\R\times\R} b_5(x)\phi(x)f_1(y)f_2(y)\,\d x\d y,\\
&\lim_{s_2\rightarrow\infty}\Lambda(f_1\otimes\rho^2_{0,s_2},f_1\otimes\phi)=\int_{\R\times\R} b_6(y)\phi(y)f_1(x)f_2(x)\,\d x\d y,\\
&\lim_{s_1\rightarrow\infty}\Lambda(\phi\otimes f_1,\rho^1_{0,s_1}\otimes f_2)=\int_{\R\times\R} b_7(x)\phi(x)f_1(y)f_2(y)\,\d x\d y,\\
&\lim_{s_2\rightarrow\infty}\Lambda(f_1\otimes\phi,f_2\otimes\rho^2_{0,s_2})=\int_{\R\times\R} b_8(y)\phi(y)f_1(x)f_2(x)\,\d x\d y
\end{split}
\end{equation}
hold for some $b_5,b_6,b_7, b_8\in \mathrm{BMO}(\R)$. Furthermore, assume that the bilinear continuous form
\[
\Lambda_0(f,g):=\Lambda(f,g)-\mathsf{BPP}(f,g,b_1,b_2,b_3,b_4,b_5,b_6,b_7,b_8)
\]
satisfies the assumption (\ref{biparanewwbp}) in Theorem \ref{bipara}, {{which in the current scalar setting reads as
\[
\sup_{z_1,z_2,w_1,w_2\in\mathbb{R}^2_+}|A(z_1,w_1)A(z_2,w_2)\Lambda_0(\phi_{z_1}\otimes\psi_{z_2},\phi_{w_1}\otimes\psi_{w_2})|<\infty.
\]}}The form $\mathsf{BPP}$ is the sum of all the bi-parameter paraproducts discussed in \cite{PottVilla11}. Recall that in the bi-parameter setting, there are three different types of paraproducts: standard bi-parameter paraproduct, mixed bi-parameter paraproduct, and the partial paraproduct, which is essentially a paraproduct in one-parameter. A typical standard bi-parameter paraproduct appearing in our $\mathsf{BPP}$ is
\[
B^1(b_1,f,g)=\sum_{R}\langle b_1, \psi_R\rangle\langle f,\psi_R^2\rangle\langle g,\psi_R\rangle,
\]
where $\{\psi_R=\psi_{R_1}\otimes\psi_{R_2}\}_R$ is a wavelet basis on $L^2(\R^2)$, and $\psi_R^2=\psi_{R_1}^2\otimes\psi_{R_2}^2$, with $\psi_I^2$ being a bump function adapted to $I$ such that $\hat{\psi_I^2}$ has compact support in a set whose measure is comparable with $|I|^{-1}$. Similarly, a typical mixed bi-parameter pararoduct and a partial paraproduct appearing in our $\mathsf{BPP}$ are as the following:
\[
\begin{split}
&B^2(b_3,f,g)=\sum_R \langle b_3, \psi_R\rangle\langle f, \psi_{R_1}^2\otimes \psi_{R_2}\rangle\langle g,\psi_{R_1}\otimes \psi_{R_2}^2\rangle,\\
&B^3(b_5,f,g)=\sum_R\langle b_5, \psi_{R_1}\rangle\langle f, \psi_{R_1}^2\otimes \psi_{R_2}\rangle\langle g, \psi_{R_1}\otimes \psi_{R_2}\rangle.
\end{split}
\]
We refer the readers to \cite{PottVilla11} for more detailed discussions of these paraproducts and their boundedness estimates. The following theorem can be proved by combining the paraproduct estimates and Theorem \ref{bipara}.

\begin{theorem}\label{biparafull}
Let $\Lambda: \mathcal{S}(\R)\otimes\mathcal{S}(\R)\times\mathcal{S}(\R)\otimes\mathcal{S}(\R)\rightarrow \mathbb C$ be a bilinear form satisfying the above properties. Then, for all $1<p<\infty$, $\Lambda$ extends to a bounded bilinear form on $L^p(\R^2)\times L^{p'}(\R^2)$.
\end{theorem}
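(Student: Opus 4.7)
The plan is to decompose $\Lambda = \Lambda_0 + \mathsf{BPP}$ and estimate each summand separately: the remainder $\Lambda_0$ will be handled by the bi-parameter $T(1)=0$ theorem, Theorem \ref{bipara}, and each summand of $\mathsf{BPP}$ by the known $L^p$-bounds for bi-parameter and partial paraproducts of \cite{PottVilla11}. Since the Banach spaces are scalar, $\X_1=\X_2=\mathbb C$ trivially enjoys UMD and Pisier's property $(\alpha)$, so Theorem \ref{bipara} applies with no restriction.

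First I would verify that $\Lambda_0$ satisfies hypothesis \eqref{biparanewwbp}. By assumption, this reduces to the scalar condition
\[
\sup_{z_1,z_2,w_1,w_2\in\R^2_+}\bigl|A(z_1,w_1)A(z_2,w_2)\Lambda_0(\phi_{z_1}\otimes\psi_{z_2},\phi_{w_1}\otimes\psi_{w_2})\bigr|<\infty,
\]
which is a direct hypothesis of the theorem (inherited from the hypotheses on $\Lambda$ and the subtraction of $\mathsf{BPP}$, which is constructed precisely to absorb the non-cancellative contributions identified by the limits \eqref{biparaT1def1}--\eqref{biparaT1def2}). Applying Theorem \ref{bipara} with $p=q$, I obtain
\[
|\Lambda_0(f,g)|\lesssim \|f\|_{L^p(\R;L^p(\R))}\|g\|_{L^{p'}(\R;L^{p'}(\R))}=\|f\|_{L^p(\R^2)}\|g\|_{L^{p'}(\R^2)}
\]
for every $1<p<\infty$, where the hidden constant depends only on $p,\phi,\psi$ and the size of the testing constant for $\Lambda_0$.

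Second, I would estimate $\mathsf{BPP}(f,g,b_1,\ldots,b_8)$. This splits naturally into three blocks according to the paraproduct type. The standard bi-parameter paraproducts $B^1(b_i,f,g)$, $i=1,2$, and by symmetry $B^1(b_i,g,f)$, are $L^p(\R^2)\times L^{p'}(\R^2)\to \mathbb C$ bounded whenever $b_i\in \mathrm{BMO}_{\mathrm{prod}}(\R^2)$, which is ensured by \eqref{biparaT1def1}. The mixed bi-parameter paraproducts $B^2(b_i,f,g)$, $i=3,4$, enjoy the analogous boundedness, again requiring only product BMO of the symbol. Finally, the partial paraproducts $B^3(b_i,f,g)$, $i=5,6,7,8$, are essentially one-parameter paraproducts in one variable tensored with an identity/averaging in the other; they map $L^p(\R^2)\times L^{p'}(\R^2)$ to $\mathbb C$ provided $b_i\in \mathrm{BMO}(\R)$, which holds by \eqref{biparaT1def2}. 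All four types of estimates are standard consequences of Chang--Fefferman's product $H^1$--BMO duality, Journ\'e's covering lemma, and the biparameter square-function theory, and are explicitly collected in \cite{PottVilla11}.

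Summing the bound for $\Lambda_0$ with those for the finitely many paraproduct pieces yields
\[
|\Lambda(f,g)|\leq |\Lambda_0(f,g)|+|\mathsf{BPP}(f,g,b_1,\ldots,b_8)|\lesssim \|f\|_{L^p(\R^2)}\|g\|_{L^{p'}(\R^2)},
\]
for all $f,g$ in the dense class $\mathcal S(\R)\otimes\mathcal S(\R)$, and the extension by density completes the proof. The main conceptual step is the verification that the subtractions performed in building $\mathsf{BPP}$ are exactly tailored so that $\Lambda_0$ fits into the framework of Theorem \ref{bipara}; this is the bi-parameter analogue of the familiar one-parameter fact that subtracting two paraproducts from $T$ kills both $T(1)$ and $T^*(1)$. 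Apart from this bookkeeping, the argument is modular: the hard analytical work was carried out in Theorem \ref{bipara} and in the paraproduct theory of \cite{PottVilla11}, and here we only have to glue them together.
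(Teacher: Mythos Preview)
Your proposal is correct and follows precisely the approach indicated by the paper, which does not give a detailed argument but simply states that the theorem ``can be proved by combining the paraproduct estimates and Theorem \ref{bipara}.'' You have carried out exactly this combination: applying Theorem \ref{bipara} (with $p=q$ in the scalar case) to $\Lambda_0$, whose testing condition is part of the hypotheses, and invoking the bi-parameter paraproduct bounds of \cite{PottVilla11} for the $\mathsf{BPP}$ pieces.
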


\section{Randomized norms} \label{SecRM} We now  lay the foundation of the Banach-valued outer measure spaces of  Section \ref{SecOM}.
Here,  $Z$ will stand for a metric space equipped with a $\sigma$-finite Borel measure $\upsilon$. 
In this section  ${\X_1,\X_2,\X_3}$ are three complex Banach spaces with the identification \eqref{trcontfm}. Furthermore, we assume that $\X_1,\X_2$ are UMD spaces. For a strongly $\upsilon$-measurable function $F: Z\to {\X_3}$, we define the (quasi)-norm
\begin{equation} \label{Rmu}
\|F  \|_{R({\upsilon},{\X_3})} := \mathcal R_{\X_1,\X_2}\big( \{ F(z): z\in Z  \}\big).
\end{equation}
We recall  the following well-known properties of the $R({\upsilon},{\X_3})$ (quasi)-norm defined above. For references, we send to \cite[Proposition 2.1]{Gaans06} for the first three, and to \cite[Lemma 5.8]{KalWeis14} for the last one.
\begin{lemma}\label{Rbounds}Let  $F,G: Z\to{\X_3}$ bounded and strongly measurable, $\lambda>0$, $m\in L^\infty(Z)$, $h \in L^1(Z)$. Then
\begin{align} & \|\lambda F\|_{{R}({\upsilon}, {\X_3})} = |\lambda| \| F\|_{{R}({\upsilon}, {\X_3})}, \\
& \| F+ G \|_{{R}({\upsilon}, {\X_3})} \leq  2\|F\|_{{R}({\upsilon}, {\X_3})}+2\|G\|_{{R}({\upsilon}, {\X_3})}, 
\\&
\big\| z\mapsto m(z) F(z) \big\|_{{R}({\upsilon}, {\X_3})} \leq 2\|m\|_\infty\|F \|_{{R}({\upsilon}, {\X_3})},\label{therest}
\\
& 
\| F*h\|_{{R}({\upsilon}, {\X_3})} \leq 2\|h\|_1 \| F \|_{{R}({\upsilon}, {\X_3})}.
\end{align} 
\end{lemma}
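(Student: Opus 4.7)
My plan is to reduce every assertion to the explicit testing form of the $\mathcal R$-bound of the set $\{F(z):z\in Z\}$, namely, to the inequality
\[
\Big|\sum_{j=1}^N \mathsf{id}\big(x_1^j,x_2^j,F(z_j)\big)\Big|\leq \|F\|_{R(\upsilon,\X_3)}\prod_{k=1,2}\Big(\E_\omega\Big\|\sum_{j=1}^N r_j(\omega) x_k^j\Big\|_{\X_k}^2\Big)^{1/2},
\]
which is simply \eqref{rbound-ab} applied to the bilinear forms $(x_1,x_2)\mapsto \mathsf{id}(x_1,x_2,F(z))$ indexed by $z\in Z$. All four statements then follow by manipulating the left-hand side, since the Rademacher quantities on the right depend only on the $x_k^j$ and not on $F$.

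The homogeneity \eqref{therest}$^{-2}$ is immediate from the trilinearity of $\mathsf{id}$, which allows $\lambda$ to be pulled out of the third slot. For the triangle inequality, given $z_j$ and $x_k^j$, I would split
\[
\sum_{j=1}^N\mathsf{id}\big(x_1^j,x_2^j,(F+G)(z_j)\big)=\sum_{j=1}^N\mathsf{id}\big(x_1^j,x_2^j,F(z_j)\big)+\sum_{j=1}^N\mathsf{id}\big(x_1^j,x_2^j,G(z_j)\big)
\]
and apply the displayed testing inequality to each piece; taking the supremum over admissible choices gives the desired additivity, with the factor $2$ in the statement being slack.

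For the multiplication property I would use Kahane's contraction principle in the complex form: for any complex scalars $(a_j)$ with $|a_j|\leq 1$ and any Banach space $\X$,
\[
\Big(\E_\omega\Big\|\sum_{j=1}^N r_j(\omega) a_j x^j\Big\|_{\X}^2\Big)^{1/2}\leq 2\Big(\E_\omega\Big\|\sum_{j=1}^N r_j(\omega)  x^j\Big\|_{\X}^2\Big)^{1/2}.
\]
Setting $a_j=m(z_j)/\|m\|_\infty$ and using trilinearity to rewrite $\sum_j \mathsf{id}(x_1^j,x_2^j,m(z_j)F(z_j))=\sum_j \mathsf{id}(a_j x_1^j,x_2^j,F(z_j))\|m\|_\infty$, the displayed testing inequality applied to $F$ together with the contraction principle produces the stated bound with constant $2\|m\|_\infty$.

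Finally, for the convolution statement (implicitly in a setting where $Z$ carries a convolution structure, as for the upper half-space used later), I would write $(F*h)(z)=\int h(w)F(z-w)\,\d\upsilon(w)$, use trilinearity and Fubini to obtain
\[
\sum_{j=1}^N \mathsf{id}\big(x_1^j,x_2^j,(F*h)(z_j)\big)=\int h(w)\sum_{j=1}^N \mathsf{id}\big(x_1^j,x_2^j,F(z_j-w)\big)\,\d\upsilon(w),
\]
and for each fixed $w$ bound the inner sum by $\|F\|_{R(\upsilon,\X_3)}\prod_k\|\sum r_j x_k^j\|_{\mathrm{Rad}(\X_k)}$. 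Integrating against $|h|$ and absorbing a factor $2$ from passing to $|h|$ (through real/imaginary decomposition of complex-valued $h$, each part bounded by $\|h\|_1$) gives the claimed inequality. The main subtlety will be the measurability/integrability needed to justify Fubini: since $F$ is strongly measurable and bounded, the scalar function $w\mapsto \sum_j \mathsf{id}(x_1^j,x_2^j,F(z_j-w))$ is measurable and uniformly bounded, so the interchange is routine; this, together with the careful handling of complex scalars in the contraction principle, is where the explicit constants in the lemma are picked up, and is the only mildly technical point of an otherwise direct argument.
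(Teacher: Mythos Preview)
Your argument is correct. In fact, the paper does not supply its own proof of this lemma: it simply cites \cite[Proposition 2.1]{Gaans06} for the first three assertions and \cite[Lemma 5.8]{KalWeis14} for the convolution statement. So your direct verification via the testing form of the $\mathcal R$-bound is more than the paper itself provides, and follows exactly the natural line.

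Two minor remarks. First, as you already observe, your proof of the quasi-triangle inequality actually yields constant $1$ rather than $2$; this is genuinely sharper than what is stated. Second, in the convolution case your final comment about ``absorbing a factor $2$ from passing to $|h|$'' is unnecessary: once you bound $\big|\int h(w)\,(\cdot)\,\d\upsilon(w)\big|\leq \int |h(w)|\,|\cdot|\,\d\upsilon(w)$ and apply the testing inequality pointwise in $w$, you obtain the bound with constant $\|h\|_1$ directly, no real/imaginary splitting required. The constant $2$ in the paper's statement is again slack (or inherited from the cited reference). Your handling of the measurability/Fubini step and of the implicit convolution structure on $Z$ is appropriate.
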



\subsection{Square function norms}

Let now  $\X$ be a  UMD Banach space. For  simple and $L^2$ normal\-ized functions in $L^2(Z,{\upsilon}) \otimes \mathcal X$, that is functions of the form
\begin{equation}f=\sum_{n=1}^N  \frac{\cic{1}_{A_n}}{\sqrt{{\upsilon}(A_n)}} x_n, 
\label{simple}
\end{equation}
where $x_n \in \mathcal X $ and $A_n\subset Z$  pairwise disjoint sets of  finite nonzero measure, we define the norms
\begin{equation}
\label{randnorms}
\left\|\sum_{n=1}^N  \frac{\cic{1}_{A_n}}{\sqrt{{\upsilon}(A_n)}} x_n\right\|_{S({\upsilon},\mathcal X)}:=\left( \E \left\|\sum_{n=1}^N r_n x_n \right\|_\X^2 \right)^{\frac12}.
\end{equation}
We call ${S}({\upsilon}, \mathcal X)$ the Banach space obtained by completion of the simple functions in $L^2(Z,{\upsilon}) \otimes \mathcal X$ with respect to the above norm.    
We summarize some   properties of the ${S}({\upsilon}, \mathcal X)$-norms in the  remarks and lemmata that follow.
\begin{remark}  The UMD property yields in particular that $\X$ is reflexive and has finite cotype. These two properties imply that $f: Z\to \mathcal X$ strongly ${\upsilon}$-measurable belongs to $S({\upsilon},\X)$ if and only if for any   orthonormal basis  $\{h_n\}$ in $L^2(Z,{\upsilon})$, 
$$
\left( \mathbb E \left\|  \sum_{n} r_n \l f,h_n \r_{L^2(Z,{\upsilon})}\right\|^2_\X \right)^{\frac12} <\infty
$$
and the above quantity is comparable to the norm defined in \eqref{randnorms} with a constant independent of the choice of the basis (and of the function $f$). See \cite[Section 4]{KalWeis14}.  
We work with simple functions to avoid technicalities in the definition of a (smooth) orthonormal system. 

 Moreover, $\{r_n\}$ can be replaced by a standard independent Gaussian sequence $\{\gamma_n\}$, in the sense that
\begin{equation}\label{comparison}
\|f \|_{{S}({\upsilon}, \mathcal X)} \sim \|f\|_{\gamma(L^2(Z,{\upsilon}),\X) }:= \sup \left( \mathbb E \left\|  \sum_{n} \gamma_n \l f,h_n \r_{L^2(Z,{\upsilon})}\right\|^2_\X \right)^{\frac12},
\end{equation} the
supremum above being taken over all orthonormal systems $\{h_n\}$ in $L^2(Z,{\upsilon})$. This follows, for instance, from \cite [Corollary 3.6, Theorem 3.7]{Nee08}. The $\gamma(L^2(Z,{\upsilon}),\X)$ norm is more widely used in preexisting literature on Banach space-valued square functions; see, for instance, \cite{KalWeis14,HytNeePor08,HytWeis10} and references therein.
\end{remark}
\begin{remark} When $\X$ is a Hilbert space, it is clear by orthonormality of the  $r_n$ that  $S( {\upsilon},X)$ is isometrically isomorphic to the Bochner space $L^2(Z,{\upsilon};\mathcal X)$.
\end{remark}
\begin{lemma}\cite[Corollary 4.8, 4.9a]
{KalWeis14} \label{kw48} Let $m\in L^\infty(Z)$, $F: Z\to\mathcal X $ strongly measurable. Then
$$
\big\| z\mapsto m(z) F(z) \big\|_{{S}({\upsilon}, \mathcal X)} \leq 2 \|m\|_\infty\|F \|_{{S}({\upsilon}, \mathcal X)}.
$$
\end{lemma}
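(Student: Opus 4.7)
The plan is to establish the inequality first with the equivalent $\gamma$-radonifying norm $\|\cdot\|_{\gamma(L^2(Z,\upsilon),\X)}$ in place of $\|\cdot\|_{S(\upsilon,\X)}$, where it amounts to the ideal property of $\gamma$ under right composition with a bounded Hilbert space operator, and then to transfer back via the equivalence \eqref{comparison}, absorbing the equivalence constants into the stated factor $2$.

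More precisely, I would identify a strongly measurable $F:Z\to\X$ with the linear operator $T_F:L^2(Z,\upsilon)\to\X$ defined weakly by $\langle T_F h, x^*\rangle=\int_Z\langle F(z),x^*\rangle\overline{h(z)}\,\d\upsilon(z)$, so that $\|F\|_\gamma=\|T_F\|_\gamma$. Pointwise multiplication $M_{\overline m}:L^2(Z,\upsilon)\to L^2(Z,\upsilon)$, $h\mapsto \overline m\, h$, has operator norm exactly $\|m\|_\infty$, and a direct computation of the weak action gives the factorization $T_{mF}=T_F\circ M_{\overline m}$. The right ideal property of the $\gamma$-radonifying norm, which follows from the orthonormal-basis definition in \eqref{comparison} together with the fact that $M_{\overline m}$ sends any orthonormal system in $L^2(Z,\upsilon)$ to a system whose Gram matrix is bounded above by $\|m\|_\infty^2 I$, then yields
\[
\|mF\|_\gamma=\|T_F\circ M_{\overline m}\|_\gamma\leq\|M_{\overline m}\|\,\|T_F\|_\gamma=\|m\|_\infty\|F\|_\gamma,
\]
after which \eqref{comparison} produces the stated bound with an absolute constant in place of $2$.

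As an alternative that exposes the constant $2$ concretely, one can split $m=m_1+im_2$ with real $m_j$ satisfying $\|m_j\|_\infty\leq\|m\|_\infty$, apply the triangle inequality for $\|\cdot\|_{S(\upsilon,\X)}$ (which accounts for the factor $2$), and reduce to real-valued $m$ with constant $1$. For real $m$ and simple $F=\sum_n\frac{\cic{1}_{A_n}}{\sqrt{\upsilon(A_n)}}x_n$ as in \eqref{simple}, uniform approximation of $m$ on the finite-measure set $\bigcup_n A_n$ by step functions constant on a refinement of $\{A_n\}$ reduces the estimate to the classical Rademacher contraction principle, $\E\|\sum_n r_n c_n x_n\|_\X^2\leq\|m\|_\infty^2\,\E\|\sum_n r_n x_n\|_\X^2$ whenever the $c_n$ are real with $|c_n|\leq\|m\|_\infty$. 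The main mild obstacle in this second route is the approximation step: since $mF$ is generally not of the simple form \eqref{simple}, one must first extend the $S$-norm to strongly measurable functions via \eqref{comparison} and then verify that multiplication by a uniformly bounded sequence of multipliers converging strongly on $L^2(Z,\upsilon)$ preserves convergence in $\gamma$ (hence in $S$). This is routine given the Kalton--Neerhoven framework recalled in the remark preceding the lemma.
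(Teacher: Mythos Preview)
The paper does not provide its own proof of this lemma; it is quoted directly from \cite[Corollary 4.8, 4.9a]{KalWeis14} (the Kalton--Weis survey on $\gamma$-radonifying operators), as indicated in the lemma heading itself. Your two proposed arguments are both correct and are, in fact, the standard ones: the first (right-ideal property of the $\gamma$-norm under composition with the bounded multiplication operator $M_{\overline m}$) is essentially the argument in Kalton--Weis, while the second (splitting $m$ into real and imaginary parts and invoking Kahane's contraction principle on simple functions) makes the constant $2$ explicit and avoids the $\gamma$-framework at the cost of a density/approximation step. Either route is acceptable; there is nothing further to compare against in the present paper.
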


\subsection{H\"older inequality for randomized norms}  The following two lemmata play the role of the (respectively) Cauchy-Schwarz and of the H\"older inequality for the randomized norms defined above. We omit the simple proof of the first lemma, which can be obtained along the same lines of Lemma \ref{innerholder} below.
\begin{lemma} 
\label{innercs}  Let $F_k:Z \to \X_k$, $k=1,2$, be strongly $\upsilon$-measurable, bounded functions. Let $ {\Lambda}: Z \to \B( \X_1,\X_2) $ be a weakly measurable family of bilinear continuous forms. Then, referring to \eqref{rbound-ab},   $$
\int_Z \left|{\Lambda}_z\big(F_1(z),F_2(z)\big)\right| \d {\upsilon}(z) \leq 2 \mathcal R_{\\X_1,\X_2} (\{\Lambda_z:z\in Z\}) \|F_1\|_{S({\upsilon},\X_1)} \|F_2\|_{S({\upsilon},\X_2)}  .
 $$
\end{lemma}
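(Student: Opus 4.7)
The plan is to reduce the integral to a finite Riemann sum on a step-function approximation of $F_1,F_2$ and invoke the definition \eqref{rbound-ab} of $\mathcal R$-boundedness directly, paying a single factor of $2$ via the complex-scalar contraction principle. First I would appeal to density to assume $F_k = \sum_{n=1}^N \cic{1}_{A_n} x_n^k$ on a common finite partition $\{A_n\}$ of sets of finite positive $\upsilon$-measure; the $\mathcal R$-bound on $\{\Lambda_z\}$ controls in particular the uniform operator norm $\sup_{z}\|\Lambda_z\|_{\B(\X_1,\X_2)}$, so both sides of the desired estimate are stable under this approximation.

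The integrand then collapses to the scalar function $\sum_n \cic{1}_{A_n}(z)|\Lambda_z(x_n^1,x_n^2)|$, which is measurable thanks to the weak measurability of $\Lambda$. A further refinement of the partition, combined with dominated convergence, would let me replace each $z\mapsto\Lambda_z(x_n^1,x_n^2)$ by a constant representative value $\Lambda_{z_m}(\xi_m^1,\xi_m^2)$ on a small atom $B_m$, where $\xi_m^k$ is the constant value of $F_k$ on $B_m$ and $z_m \in B_m$ is chosen appropriately. Introducing unimodular scalars $\eps_m\in\mathbb C$ with $\eps_m \Lambda_{z_m}(\xi_m^1,\xi_m^2) = |\Lambda_{z_m}(\xi_m^1,\xi_m^2)|$ and absorbing the weights $\sqrt{\upsilon(B_m)}$ into $y_m^k := \sqrt{\upsilon(B_m)}\,\xi_m^k$, the bilinearity of each $\Lambda_{z_m}$ would recast the Riemann sum as $\sum_m \Lambda_{z_m}(y_m^1,\eps_m y_m^2)$, and \eqref{rbound-ab} would then apply to give
$$
\Big|\sum_m \Lambda_{z_m}(y_m^1,\eps_m y_m^2)\Big| \leq \mathcal R_{\X_1,\X_2}(\{\Lambda_z\})\,\Big(\E\Big\|\sum_m r_m y_m^1\Big\|_{\X_1}^2\Big)^{1/2}\Big(\E\Big\|\sum_m r_m\eps_m y_m^2\Big\|_{\X_2}^2\Big)^{1/2}.
$$

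To conclude, the complex-scalar Kahane contraction principle removes the $\eps_m$ from the second factor at the cost of a multiplicative $2$, which is precisely the constant appearing in the statement. Unwrapping \eqref{randnorms}, each of the two remaining factors is by construction equal to $\|\sum_m \cic{1}_{B_m}\xi_m^k\|_{S(\upsilon,\X_k)} = \|F_k\|_{S(\upsilon,\X_k)}$, and sending the approximation errors to zero yields the stated inequality. The main annoyance I foresee is the bookkeeping of the two-stage approximation, first of the $F_k$ in $S(\upsilon,\X_k)$ and then of the scalar functions $\Lambda_\cdot(x_n^1,x_n^2)$ on each atom; this is nevertheless soft, as the uniform operator bound on $\{\Lambda_z\}$ furnished by the $\mathcal R$-hypothesis controls every error term without any additional Banach-space input, so no genuinely new idea should be required.
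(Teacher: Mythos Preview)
Your proposal is correct and follows essentially the same route as the paper: the paper omits the proof of this lemma but indicates it is obtained along the lines of Lemma~\ref{innerholder}, which likewise reduces to simple functions, approximates $\Lambda_z$ by constants on each atom (via Fatou rather than dominated convergence, a cosmetic difference), applies the $\mathcal R$-bound definition with unimodular scalars attached to one factor, and then removes those scalars by Kahane's contraction principle at the cost of the factor $2$.
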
 
%
 \begin{lemma} 
\label{innerholder} In the above setting, let $F_k:Z \to \X_k$, $k=1,2$, $F_3: Z\to {\X_3}$ be strongly $\upsilon$-measurable, bounded functions. Let $ {\Lambda}: Z \to \B( \X_1,\X_2,{\X_3}) $ be a weakly measurable family of trilinear continuous forms. Then, referring to \eqref{Rlambda},   $$
\int_Z \left|{\Lambda}\big(F_1,F_2, F_3\big)\right| \d {\upsilon} \leq 2 \mathcal R_{{\X_3}|\X_1,\X_2} (\{\Lambda_z:z\in Z\}) \|F_1\|_{S({\upsilon},\X_1)} \|F_2\|_{S({\upsilon},\X_2)}  \|F_3\|_{R({\upsilon},{\X_3})}.
 $$
\end{lemma}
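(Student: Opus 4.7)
The plan is to pass to a simple-function approximation on a common partition, eliminate the absolute value through a measurable unimodular multiplier, and then appeal directly to the definition \eqref{Rboundsform} of $\mathcal R_{\X_3|\X_1,\X_2}$.

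First, by density of the simple functions \eqref{simple} in $S(\upsilon,\X_k)$ for $k=1,2$, together with the fact that $\|F_3\|_{R(\upsilon,\X_3)}$ depends only on the essential range of $F_3$, I reduce to the case where $F_1,F_2,F_3$ are adapted to a common partition $\{A_n\}_{n=1}^N$ of $Z$ into pairwise disjoint Borel sets of finite positive $\upsilon$-measure, of the shape
\begin{align*}
F_k = \sum_{n=1}^N \frac{\mathbf{1}_{A_n}}{\sqrt{\upsilon(A_n)}}\,x_k^n,\qquad k=1,2,\qquad F_3 = \sum_{n=1}^N \mathbf{1}_{A_n}\,x_3^n.
\end{align*}
The uniform boundedness inherent in the hypothesis \eqref{Rboundsform} provides the domination needed to pass the bound through the approximation.

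Next, for each $n$ pick a measurable $\psi_n:A_n\to\mathbb C$ with $|\psi_n|\equiv 1$ satisfying $\psi_n(z)\Lambda_z(x_1^n,x_2^n,x_3^n)=|\Lambda_z(x_1^n,x_2^n,x_3^n)|$ almost everywhere, and write $P_n := \mathbf{1}_{A_n}\upsilon/\upsilon(A_n)$ for the uniform probability on $A_n$. By Fubini the integral becomes
\begin{align*}
\int_Z |\Lambda_z(F_1,F_2,F_3)|\,d\upsilon = \mathbb{E}_{(z_1,\ldots,z_N)} \sum_{n=1}^N \Lambda_{z_n}\bigl(\psi_n(z_n)x_1^n,\,x_2^n,\,x_3^n\bigr),
\end{align*}
the outer expectation being over independent samples $z_n\sim P_n$. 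For every realization, applying \eqref{Rboundsform} to the finite family $\{\Lambda_{z_n}\}_n$ yields
\begin{align*}
\Bigl|\sum_{n=1}^N \Lambda_{z_n}(\psi_n(z_n)x_1^n, x_2^n, x_3^n)\Bigr| \leq \mathcal R_{\X_3|\X_1,\X_2}(\{\Lambda_z\})\,\mathcal R_{\X_1,\X_2}(\{x_3^n\})\,\mathsf R_1\,\mathsf R_2,
\end{align*}
where $\mathsf R_1=(\mathbb{E}_\omega\|\sum_n r_n(\omega)\psi_n(z_n)x_1^n\|_{\X_1}^2)^{1/2}$ and $\mathsf R_2=(\mathbb{E}_\omega\|\sum_n r_n(\omega) x_2^n\|_{\X_2}^2)^{1/2}$.

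To finish, Kahane's contraction principle for unimodular complex scalars gives $\mathsf R_1\leq 2\,\|F_1\|_{S(\upsilon,\X_1)}$, uniformly in the realization $(z_1,\ldots,z_N)$, while $\mathsf R_2=\|F_2\|_{S(\upsilon,\X_2)}$ by \eqref{randnorms}. The bound $\mathcal R_{\X_1,\X_2}(\{x_3^n\})\leq \|F_3\|_{R(\upsilon,\X_3)}$ is immediate from \eqref{Rmu}. Since the resulting estimate is uniform in $(z_1,\ldots,z_N)$, the outer expectation contributes nothing further, yielding the claim with constant $2$. The main subtlety I anticipate is the measurability of the signs $\psi_n$, which should follow from the weak measurability of $\Lambda_z$ combined with the strong measurability of $F_1,F_2,F_3$; once this is granted the remainder is a direct unpacking of the $\mathcal R$-boundedness definitions.
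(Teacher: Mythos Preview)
Your proof is correct and follows essentially the same approach as the paper's: reduce to simple functions on a common partition, absorb the absolute value into a unimodular multiplier on the $\X_1$ slot, apply the defining inequality \eqref{Rboundsform} to the resulting finite sum, and finish with Kahane's contraction principle. The only cosmetic difference is that the paper handles the $z$-dependence of $\Lambda_z$ by a Fatou/simple-function approximation (replacing $\Lambda_z$ by a constant $\Lambda_{z_j}$ on each $A_j$), whereas you instead write the integral as an expectation over independent samples $z_n\sim P_n$ and bound each realization uniformly; both devices accomplish the same reduction.
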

\begin{proof}[Proof of Lemma \ref{innerholder}]Appealing to  strong measurability   we can assume $F_j$, $j=1,2,3$ are simple valued and  write  
$$
F_k(z)=\sum_{j=1}^N \frac{x^j_{k}}{\sqrt{{\upsilon}(A_j)}} \cic{1}_{A_j}(z), \; k=1,2,  \quad F_3(z)=\sum_{j=1}^N  {x_3^{j}}  \cic{1}_{A_j}(z),  $$
with $A_j \subset Z$ of finite nonzero measure. Subsequently, we can employ  Fatou's lemma and approximate ${\Lambda}(F_1,F_2,F_3)$ by simple functions   ${\Lambda}_z \equiv {\Lambda}_{z_j}$ for  $z \in A_j$.  Then, for suitable unimodulars $\alpha_j$, $j=1,\ldots,N$, \begin{equation}
\label{avertrick}\nonumber
\begin{split} &\quad\int_Z \left|{\Lambda}_z\big(F_1(z),F_2(z), F_3(z)\big)\right| \d {\upsilon}(z)= \sum_{j=1}^N {\Lambda}_{z_j} (\alpha_j x^j_1,x^j_2,x_3^{j})  \\ &   \leq \mathcal R_{{\X_3}|\X_1,\X_2} (\{\Lambda_z\})  
 \RR_{\X_1,\X_2}\left(\{x_3^{1},\ldots,x_3^{N}\} \right)  \Big(\E \Big\| \sum_{j=1}^N r_j  \alpha_j  x_1^j \Big\|_{\X_1}^2 \Big)^{\frac12}\Big(\E  \Big\| \sum_{j=1}^N r_j    x_2^j \Big\|_{\X_2}^2 \Big)^{\frac12}  
\end{split}
\end{equation}
and the conclusion follows by using Kahane's contraction principle, to deal with the $\alpha_j$, and by recalling the definitions of the randomized norms.
\end{proof}

\section{Banach-valued outer measure theory and conical sizes} \label{SecOM}
 In this paragraph, we extend the outer measure spaces and outer $L^p$-spaces introduced in \cite{DoThiele15} to Banach space-valued functions. Aside from a couple of technical points which we stress in the definitions below, the construction proceeds along the same lines.
\subsection{Outer measure, sizes and outer $L^p$}
Throughout this section, without further explicit mention,  $\mathcal X$ will be a complex Banach space.
Let $Z$ be a metric space and $\mathbf{E}$ a subcollection of  Borel subsets of $Z$. Let ${\sigma}$ be a premeasure, that is, a function from $\mathbf{E}$ to $[0,\infty)$. We say that the premeasure ${\sigma}$ generates the outer measure   \cite[Definition 2.1]{DoThiele15}   $\mu$ defined by
\[
Z\supset E \mapsto \mu(E):=\inf_{\mathbf{E}'}\sum_{E'\in\mathbf{E}'} \sigma(E') 
\]
where the infimum is taken over all subcollections $\mathbf{E}'$ of  $\mathbf{E}$ which cover the set $E$. To simplify the statements that follow, we work with subcollections $\mathbf{E}$ possessing the following property: for every point $z \in Z$ one may find an open ball $B$ contaning $z$ and a set $E \in \cic{E}$ with $B \subset E$. This geometric constraint will be apparently satisfied in our concrete case.

Denote by $\mathcal{B}(Z;\X)$  the set of $\X$-valued, strongly Borel-measurable   functions on $Z$. A \emph{size} is a map
\[
\mathsf{s}:\, \mathcal{B}(Z;\X)\rightarrow [0,\infty]^{\mathbf{E}}
\]
satisfying for any $f,g$ in $\mathcal{B}(Z;\X)$ and $E\in\mathbf{E}$  
\begin{align}
& \label{qmp}   \mathsf{s}(z\mapsto m(z)f(z))(E)\leq  2\mathsf{s}(f)(E) \qquad \forall m: Z\to \mathbb C, \,\|m\|_\infty \leq 1,\\\
&\label{sizelin} \mathsf{s}(\lambda f)(E)=|\lambda|\mathsf{s}(f)(E),\quad\forall \lambda\in\mathbb{C},\\
& \label{sizesubadd}  \mathsf{s}(f+g)(E)\leq C\mathsf{s}(f)(E)+C\mathsf{s}(g)(E),\quad\text{for some fixed $C\geq 1$}.
\end{align}  

We say that the triple $(Z, \sigma,\mathsf{s})$  is a $\X$-valued \emph{outer measure space}, keeping the generating collection  $\cic{E}$ and the Banach space  implicit in the notation. If the  underlying premeasure ${\sigma}$ is obvious we simply write $(Z,\mathsf{s})$ for the corresponding $\X$-valued  {outer measure spaces} instead.
\begin{remark}Observe how the quasi-monotonicity property \eqref{qmp} replaces   the corresponding monotonicity property of the scalar-valued case. The factor $2$ could be discarded from \eqref{qmp} if $\X$ were a real Banach space. We prefer to work in the complex case; dealing with this extra factor does not introduce essential changes in what follows.   Property \eqref{sizesubadd} above is 
referred to as \emph{quasi-subadditivity}.
\end{remark}

We now move onto the definition of $\X$-valued \emph{outer $L^p$ spaces}. First of all, we define the \emph{outer (essential) supremum} of $\mathcal{B}(Z;\X)$ over the Borel-measurable   $F\subset  Z$ by\[
\underset{F}{\text{outsup}\,}\mathsf{s}(f):=\sup_{E\in\mathbf{E}}\mathsf{s}(f\cic{1}_{ F})(E).
\]
Note that the monotonicity property (1) of $\mathsf{s}$ implies that $\text{outsup}_{F}\mathsf{s}(f)\leq 2\text{outsup}_{G}\mathsf{s}(f)$ whenever $F\subset G$.
We then define, for each $\lambda>0$, the \emph{super level measure} $$
\mu(\mathsf{s}(f)>\lambda)= \inf \left\{ \mu(F): F\subset Z \textrm{ Borel}, \, \underset{Z\backslash F}{\text{outsup}\,}\mathsf{s}(f) \leq \lambda\right\}. $$  
Finally, for     $f\in \mathcal{B}(Z;\X)$  we define 
\begin{align*}
&\|f\|_{L^p(Z,\mathsf{s})}:=\left(\int_0^\infty p\lambda^{p-1}\mu(\mathsf{s}(f)>\lambda)\,\d\lambda\right)^{\frac1p}, \qquad 0<p<\infty,\\ &
\|f\|_{L^{p,\infty}(Z,\mathsf{s})}:= \sup_{\lambda>0} \lambda\big(\mu(\mathsf{s}(f)>\lambda)\big)^{\frac1p}, \qquad 0<p<\infty,
\\ &
\|f\|_{L^{\infty,\infty}(Z,\mathsf{s})}=\|f\|_{L^\infty(Z,\mathsf{s})}:=\underset{Z}{\text{outsup}\,}\mathsf{s}(f)=\sup_{E\in\mathbf{E}}\mathsf{s}(f)(E).
\end{align*}
Arguing along the lines of \cite[Proposition 3.1]{DoThiele15}, in view of properties \eqref{qmp}-\eqref{sizesubadd} of the size, we obtain the following basic properties of outer $L^p$ spaces.
\begin{proposition} Let $(Z,{\sigma}, \mathsf{s})$ be an $\X$-valued outer measure space, $f,g \in \mathcal B(Z; \mathcal X)$. Then, for $0<p\leq \infty$, we have
\begin{align}
& \label{Lpqmp}   \|mf\|_{L^{p}(Z,{\sigma}, \mathsf{s})}\leq 2 \|f\|_{L^{p}(Z,\sigma, \mathsf{s})} \qquad \forall m: Z\to \mathbb C, \,\|m\|_\infty \leq 1,\\\
&\label{Lplin} \|\lambda f\|_{L^{p}(Z,{\sigma}, \mathsf{s})}=|\lambda| \|  f\|_{L^{p}(Z,{\sigma}, \mathsf{s})},  \qquad\forall \lambda\in\mathbb{C},
\\
&\label{Lpscal} \|  f\|_{L^{p}(Z,\lambda{\sigma}, \mathsf{s})}=\lambda^{1/p} \|  f\|_{L^{p}(Z,{\sigma}, \mathsf{s})},  \qquad\forall \lambda>0,\\
& \label{Lpsubadd} \|  f+g\|_{L^{p}(Z, {\sigma}, \mathsf{s})}\leq C \|  f\|_{L^{p}(Z, {\sigma}, \mathsf{s})}+ C\|  g\|_{L^{p}(Z, {\sigma}, \mathsf{s})},
\end{align}
and identical statements hold with $L^{p,\infty} $ replacing ${L^{p}}$ in \eqref{Lpqmp}-\eqref{Lpsubadd}.
\end{proposition}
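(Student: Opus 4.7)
The plan is to follow verbatim the template of Proposition 3.1 in Do--Thiele, modified only to accommodate the quasi-properties of the size, with the constants $2$ and $C$ from \eqref{qmp} and \eqref{sizesubadd} propagating layer-by-layer through the definitions of outer supremum, super level measure, and outer $L^p$-norm. All four statements will be established first for $0<p<\infty$ via the layer-cake representation, then separately for the $p=\infty$ endpoint by direct inspection of the outsup, and finally for $L^{p,\infty}$ by the same chain of estimates with the layer-cake integral replaced by the defining supremum.

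For \eqref{Lpqmp}, I would apply \eqref{qmp} to $(mf)\cic{1}_{Z\setminus F} = m(f\cic 1_{Z\setminus F})$ to get $\mathrm{outsup}_{Z\setminus F}\,\mathsf{s}(mf)\leq 2\,\mathrm{outsup}_{Z\setminus F}\,\mathsf{s}(f)$ for every Borel $F\subset Z$, hence $\mu(\mathsf{s}(mf)>2\lambda)\leq \mu(\mathsf{s}(f)>\lambda)$. A change of variable $\lambda=\tau/2$ in the layer-cake representation produces the factor $2^p$, and thus the factor $2$ after extracting the $p$-th root. Statement \eqref{Lplin} follows from \eqref{sizelin}, which propagates to the equality $\mu(\mathsf{s}(\lambda f)>\tau)=\mu(\mathsf{s}(f)>\tau/|\lambda|)$ for $\lambda\neq 0$; the layer-cake integral then scales by $|\lambda|^p$. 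Statement \eqref{Lpscal} is even more direct: replacing $\sigma$ by $\lambda\sigma$ replaces the generated outer measure $\mu$ by $\lambda\mu$, and the factor $\lambda$ comes out of the layer-cake integral, giving $\lambda^{1/p}$ after extracting the $p$-th root.

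The quasi-subadditivity \eqref{Lpsubadd} is the only step requiring a mildly non-mechanical argument. Given any Borel sets $F_f,F_g\subset Z$ with $\mathrm{outsup}_{Z\setminus F_f}\,\mathsf{s}(f)\leq \alpha/C$ and $\mathrm{outsup}_{Z\setminus F_g}\,\mathsf{s}(g)\leq \beta/C$, the quasi-subadditivity \eqref{sizesubadd} of the size yields $\mathrm{outsup}_{Z\setminus(F_f\cup F_g)}\,\mathsf{s}(f+g)\leq \alpha+\beta$. Combining this with the subadditivity of $\mu$, taking infima in the definitions of the super level measures, and choosing $\alpha=\beta=\lambda/2$, I obtain
\[
\mu\bigl(\mathsf{s}(f+g)>\lambda\bigr)\leq \mu\bigl(\mathsf{s}(f)>\lambda/(2C)\bigr)+\mu\bigl(\mathsf{s}(g)>\lambda/(2C)\bigr).
\]
Inserting this into the layer-cake representation and using the elementary inequality $(a^p+b^p)^{1/p}\leq C_p(a+b)$, valid for all $p>0$, yields \eqref{Lpsubadd} with constant $2CC_p$.

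The weak-type counterparts are obtained by the same chain of estimates, with the layer-cake integral replaced by the defining supremum $\sup_\lambda \lambda\mu(\mathsf{s}(\cdot)>\lambda)^{1/p}$, and a routine rearrangement of the exponent $1/p$ after each change of variable. The $p=\infty$ endpoint, where the outer $L^\infty$-norm coincides with the outer essential supremum, is immediate since \eqref{qmp}--\eqref{sizesubadd} transfer verbatim to the supremum over $\cic{E}$, and statement \eqref{Lpscal} is vacuous at $p=\infty$ as the supsup is independent of $\sigma$. I anticipate no substantive obstacle; the only care needed is in the bookkeeping of constants and in the harmless but slightly nuisance verification at the weak-type endpoint.
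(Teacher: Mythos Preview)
Your proposal is correct and matches the paper's approach exactly: the paper does not give a proof but simply refers to \cite[Proposition 3.1]{DoThiele15} together with the size properties \eqref{qmp}--\eqref{sizesubadd}, which is precisely what you do. One small bookkeeping point in your subadditivity step: passing from $\mathrm{outsup}_{Z\setminus F_f}\,\mathsf{s}(f)$ to $\mathrm{outsup}_{Z\setminus(F_f\cup F_g)}\,\mathsf{s}(f)$ requires an appeal to the quasi-monotonicity \eqref{qmp} (since $f\cic{1}_{Z\setminus(F_f\cup F_g)}=\cic{1}_{Z\setminus F_g}\cdot f\cic{1}_{Z\setminus F_f}$), costing an extra factor of $2$, but as the constant in \eqref{Lpsubadd} is unspecified this is harmless.
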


\begin{remark}Let $(Z,\sigma,\mathsf{s})$ be an outer measure space and $W\subset Z$ be a Borel set. It will be convenient to consider the  restriction to $W$ of the corresponding  outer $\X$-valued $L^p$ spaces defined  by  
$$
\|f\|_{L^{p}(W,{\sigma}, \mathsf{s})}:= \|f\cic{1}_{W}\|_{L^{p}(Z,{\sigma}, \mathsf{s})}
$$
It follows by \eqref{Lpqmp} that $\|f\|_{L^{p}(W,{\sigma}, \mathsf{s})}\leq 2\|f \|_{L^{p}(Z,{\sigma}, \mathsf{s})} $, and similarly for outer $L^{p,\infty}$-spaces. \end{remark}

\begin{proposition}[Marcinkiewicz interpolation] \label{propint} Let    $(Z,\mathsf{s})$ be an $\X$-valued  outer measure space.  Let $\nu$ be a  $\sigma$-finite positive Borel measure,  and let $T$ be a quasi-sublinear operator mapping  functions in the Bochner spaces $L^{p_1}(\nu; \X)$ and $L^{p_2}(\nu;\X)$ into  strongly measurable, $\X$-valued functions on $Z$, for some $1\leq p_1<p_2\leq\infty$. Assume that for all $f\in L^{p_1}(\nu;\X))+L^{p_2}(\nu;\X))$,
\begin{align*}
&\|T(f)\|_{L^{p_1,\infty}(Z,\mathsf{s})}\leq A_1\|f\|_{L^{p_1}(\nu;\X)},\\
&\|T(f)\|_{L^{p_2,\infty}(Z,\mathsf{s})}\leq A_2\|f\|_{L^{p_2}(\nu;\X))}.
\end{align*}
Then there holds
\[
\|T(f)\|_{L^p(Z,\mathsf{s})}\leq A_1^{\theta_1}A_2^{\theta_2}C_{p_1,p_2,p}\|f\|_{L^p(\nu; \X)},
\]
for all $p_1<p<p_2$, with $\theta_1,\theta_2$ such that $\theta_1+\theta_2=1$ and $1/p=\theta_1/p_1+\theta_2/p_2$.
\end{proposition}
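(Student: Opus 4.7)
The argument I would give is the classical Marcinkiewicz scheme, adapted to the outer $L^p$ setting via the layer-cake identity
$$
\|T(f)\|_{L^p(Z,\mathsf{s})}^p = \int_0^\infty p\lambda^{p-1}\mu(\mathsf{s}(T(f))>\lambda)\,\d\lambda
$$
in place of the usual distribution function. As a first step, I would record a quasi-subadditivity for the super-level measure: combining \eqref{sizesubadd} with the quasi-sublinearity of $T$ (so that $\mathsf{s}(T(g+h))\lesssim \mathsf{s}(T(g))+\mathsf{s}(T(h))$ at the level of sizes) one obtains
$$
\mu(\mathsf{s}(T(f))>C'\lambda)\leq \mu(\mathsf{s}(T(g))>\lambda)+\mu(\mathsf{s}(T(h))>\lambda)
$$
for any splitting $f=g+h$, with constant $C'$ depending only on $T$ and on $C$ in \eqref{sizesubadd}. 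The outer Chebyshev inequality $\mu(\mathsf{s}(T(h))>\lambda)\leq (A_i\|h\|_{L^{p_i}(\nu;\X)}/\lambda)^{p_i}$ is built directly into the hypotheses, and together with the above reduces matters to a level-set truncation.

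Fix $f\in L^p(\nu;\X)$ and, for each $\lambda>0$ with $p_2<\infty$, set $g_\lambda:=f\cic{1}_{\{\|f\|_{\X}>\lambda\}}\in L^{p_1}(\nu;\X)$ and $h_\lambda:=f-g_\lambda\in L^{p_2}(\nu;\X)$. Strong measurability of $f$ makes $\|f\|_{\X}$ Borel, so the splitting is well defined, and the usual truncation estimates control $\|g_\lambda\|_{L^{p_1}(\nu;\X)}^{p_1}$ and $\|h_\lambda\|_{L^{p_2}(\nu;\X)}^{p_2}$ in terms of the $L^p$ mass of $f$ above, resp.\ below, level $\lambda$. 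Applying the weak-type hypothesis to each piece yields
$$
\mu(\mathsf{s}(T(f))>C'\lambda)\lesssim A_1^{p_1}\lambda^{-p_1}\|g_\lambda\|_{L^{p_1}(\nu;\X)}^{p_1}+A_2^{p_2}\lambda^{-p_2}\|h_\lambda\|_{L^{p_2}(\nu;\X)}^{p_2}.
$$
When $p_2=\infty$ the truncation is instead performed at level $c\lambda$ with $c=1/(2C'A_2)$, so that the $L^\infty$ hypothesis makes the second super-level measure vanish identically and only the $p_1$-contribution survives.

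Finally, I would insert this bound into the layer-cake identity and carry out the routine $\lambda$-integration using Fubini, rewriting each endpoint term by substitution $t=\|f(x)\|_{\X}$ and collecting powers. The two contributions assemble into $A_1^{\theta_1 p}A_2^{\theta_2 p}\|f\|_{L^p(\nu;\X)}^p$ with a combinatorial factor $C_{p_1,p_2,p}$ that diverges only at the endpoints. I do not foresee any genuine obstacle beyond careful bookkeeping of $C'$: the key point is that $C'$ enters solely as a rescaling of the super-level threshold, and thus can be absorbed into $C_{p_1,p_2,p}$ without perturbing the sharp exponents $\theta_1,\theta_2$ or the powers attached to $A_1$ and $A_2$.
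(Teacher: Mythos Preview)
The paper states this proposition without proof, implicitly deferring to the scalar-valued analogue in Do--Thiele \cite[Proposition 3.3]{DoThiele15}; your argument is exactly the standard Marcinkiewicz scheme carried over to the outer setting, which is what is intended.

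One small point of bookkeeping: truncating $f$ at the level $\lambda$ itself (the integration variable) yields a bound of the form $(A_1^{p_1}+A_2^{p_2})^{1/p}\|f\|_{L^p}$, not the sharp $A_1^{\theta_1}A_2^{\theta_2}\|f\|_{L^p}$ claimed in the statement. To recover the correct exponents you must truncate at height $t\lambda$ for a parameter $t$ depending on $A_1,A_2,p_1,p_2,p$ and optimize, just as you already do in the $p_2=\infty$ case. This does not affect the structure of your argument, only the final assembly.
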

We sketch the proof of the following important proposition, which combines (a vector-valued version of) \cite[Proposition 3.4]{DoThiele15} with the Radon-Nykodym property \cite[Proposition 3.6]{DoThiele15}.
\begin{proposition}[H\"older's inequality]\label{holderDT} Let $Z$ be a metric space and ${\nu} $ be a positive Borel measure on $Z$. Let $\sigma$ be an outer measure on $Z$ generated   by the collection $\cic{E}$ via the premeasure ${\sigma}$, and let $(Z,{\sigma},\mathsf{s})$ be a scalar-valued outer measure space  with the property that $$
\int_{E} |f(z)| \,\d{\nu}(z) \leq K_1 {\sigma}(E) \mathsf{s}(f) (E) \qquad \forall E \in \cic{E}. 
$$
For $j=1,\ldots,n$, let  $\X_j $ be a complex Banach space and $(Z,{\sigma},\mathsf{s}_j)$
 be an $\X_j$-valued outer measure space. Suppose that for a weakly (Borel)-measurable family of sub-$n$-linear forms
$$
(x_1,\ldots,x_n)
\in \X_1\times\cdots\times \X_n \mapsto \Lambda_z(x_1,\ldots,x_n) \in \mathbb C,\qquad z\in Z
$$ there holds
\begin{equation} \label{sizereq}
\mathsf{s}\big(z\mapsto |\Lambda_z(F_1(z),\ldots,F_n(z))| \big)(E)\leq K_2\prod_{j=1}^n \mathsf{s}_j(F_j)(E)\qquad \forall E \in \cic{E}
\end{equation}
and all bounded  $F_j\in\mathcal{B}(Z; \X_j)$, $j=1,\ldots,n$.
 Then, for all Borel   $W\subset Z$
\begin{equation}
\label{hineq}
\int_{W} \left|\Lambda_z\big(F_1(z),\ldots,F_n(z)\big)\right| \, \d {\nu}(z) \leq n2^nK_1K_2 \prod_{j=1}^n \|F_j\|_{L^{p_j}(W,\sigma,\mathsf{s}_j)}
\end{equation}
for all  tuples $(p_1,\ldots,p_n)$ with $1\leq p_1,\ldots, p_n\leq \infty$ and $\frac{1}{p_1}+\cdots+\frac{1}{p_n}=1$.
\end{proposition}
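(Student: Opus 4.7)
The outer measure $\mu$, the scalar size $\mathsf s$, and the output of $\Lambda_z$ are all scalar-valued, so the Banach structure of the $\X_j$ enters the statement only through the sizes $\mathsf s_j$ and not through the outer $L^p$ machinery itself. Accordingly, my plan is to follow the two-step strategy of \cite[Proposition 3.4]{DoThiele15}, paying attention to the factors of $2$ produced by the quasi-monotonicity \eqref{qmp} and quasi-subadditivity \eqref{sizesubadd} of sizes.

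\emph{Step 1: base case} $p_1=1$, $p_j=\infty$ for $j \geq 2$. Fix $\varepsilon>0$. For each $k \in \Z$ I pick a Borel set $F^k \subset Z$ with $\mu(F^k) \leq \mu(\mathsf{s}_1(F_1 \cic{1}_W)>2^k) + \varepsilon 2^{-|k|}$ and $\mathrm{outsup}_{Z \setminus F^k}\mathsf{s}_1(F_1 \cic{1}_W) \leq 2^k$; replacing $F^k$ with $\bigcup_{k' \geq k} F^{k'}$ I may assume the family is decreasing in $k$. Cover the annulus $A_k := F^k \setminus F^{k+1}$ by a subcollection $\cic{E}_k \subset \cic{E}$ of total $\sigma$-mass at most $\mu(F^k)+\varepsilon 2^{-|k|}$. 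On each $E \in \cic{E}_k$, chaining the integral-vs-size hypothesis with the sub-multiplicativity \eqref{sizereq} gives
\[
\int_E \big|\Lambda_z(F_1 \cic{1}_{A_k \cap W}, F_2, \ldots, F_n)\big|\,\d\nu \leq K_1 K_2\, \sigma(E) \prod_{j=1}^n \mathsf{s}_j(F_j \cic{1}_{A_k \cap W})(E).
\]
Since $A_k \cap W \subset Z \setminus F^{k+1}$, the quasi-monotonicity \eqref{qmp} forces $\mathsf{s}_1(F_1 \cic{1}_{A_k \cap W})(E) \lesssim 2^k$, while for $j \geq 2$, $\mathsf{s}_j(F_j \cic{1}_{A_k \cap W})(E) \lesssim \|F_j\|_{L^\infty(W,\sigma,\mathsf{s}_j)}$. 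Summing over $E \in \cic{E}_k$ and then over $k \in \Z$, and invoking the layer-cake representation $\sum_k 2^k \mu(\mathsf{s}_1(F_1 \cic{1}_W)>2^k) \lesssim \|F_1\|_{L^1(W,\sigma,\mathsf{s}_1)}$, concludes Step 1 as $\varepsilon \to 0$.

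\emph{Step 2: general H\"older tuple.} By homogeneity I normalize $\|F_j\|_{L^{p_j}(W,\sigma,\mathsf{s}_j)}=1$. For each $j \geq 2$ and $k_j \in \Z$ I construct superlevel Borel sets for $\mathsf{s}_j(F_j \cic{1}_W)$ at height $2^{k_j}$ analogously to Step 1, and form simultaneous annuli $A_{\cic{k}}$ indexed by $\cic{k}=(k_2,\ldots,k_n) \in \Z^{n-1}$. By design $W=\bigcup_{\cic{k}} A_{\cic{k}}$ up to a null set, and each $F_j \cic{1}_{A_{\cic{k}}}$ has $L^\infty$-outer-norm $\lesssim 2^{k_j}$ for $j \geq 2$. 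Applying Step 1 on each $A_{\cic{k}}$ yields
\[
\int_{A_{\cic{k}}} \big|\Lambda_z(F_1,\ldots,F_n)\big|\,\d\nu \lesssim K_1 K_2 \, \|F_1 \cic{1}_{A_{\cic{k}}}\|_{L^1(W,\sigma,\mathsf{s}_1)} \prod_{j=2}^n 2^{k_j}.
\]
Summing over $\cic{k}$ via the classical H\"older inequality on $\Z^{n-1}$ with exponents $(p_2,\ldots,p_n)$, and exploiting $\sum_{k_j} 2^{k_j p_j} \mu(\mathsf{s}_j(F_j \cic{1}_W)>2^{k_j}) \lesssim \|F_j\|_{L^{p_j}}^{p_j}=1$ together with the identity $\sum_{j=1}^n 1/p_j=1$, collapses the multi-sum to $\prod_{j=1}^n \|F_j\|_{L^{p_j}}$, producing \eqref{hineq}.

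\emph{Main obstacle.} The only substantive departure from the scalar Do--Thiele proof is the careful bookkeeping of the multiplicative factors of $2$ introduced by \eqref{qmp}--\eqref{sizesubadd} at each truncation; their accumulation through the $n$-fold decomposition accounts for the explicit constant $n2^n K_1 K_2$ appearing in \eqref{hineq}. Since the outer measure, the scalar size, and the integrand are all scalar-valued, no further Banach-specific complication arises beyond this.
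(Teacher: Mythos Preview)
Your Step 1 is essentially correct (modulo the standard care needed to handle the ``residual'' sets $\bigcap_k F^k$ and $Z\setminus\bigcup_k F^k$, which have outer measure zero and vanishing size respectively). The genuine gap is in Step 2: you never explain how the sum
\[
\sum_{\cic{k}\in\Z^{n-1}} \|F_1 \cic{1}_{A_{\cic{k}}}\|_{L^1(W,\sigma,\mathsf{s}_1)} \prod_{j\geq 2} 2^{k_j}
\]
is controlled by $\prod_j\|F_j\|_{L^{p_j}}$. Discrete H\"older on $\Z^{n-1}$ with exponents $(p_2,\ldots,p_n)$ leaves a residual $\ell^{p_1}$-norm of the sequence $\cic{k}\mapsto\|F_1\cic{1}_{A_{\cic{k}}}\|_{L^1}$, and there is no reason this is bounded by $\|F_1\|_{L^{p_1}}$: outer $L^1$-norms over disjoint sets do not add up in any controlled way, and the ``outer H\"older'' estimate $\|F_1\cic{1}_A\|_{L^1}\lesssim\|F_1\|_{L^{p_1}}\mu(A)^{1/p_1'}$ you would need is precisely the statement you are trying to prove. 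A simple test case ($n=2$, $p_1=p_2=2$, level measures $\mu_{2,k}=2^{-2k}$ for $0\leq k\leq N$) already shows that the crude bounds available on $\|F_1\cic{1}_{A_k}\|_{L^1}$ produce a divergent factor of order $\sqrt{N}$.

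The paper avoids this difficulty by not decomposing into annuli at all. Instead, it first invokes \cite[Proposition~3.6]{DoThiele15} to reduce to bounding $\|G\|_{L^1(Z,\sigma,\mathsf{s})}$ for $G=|\Lambda(F_1,\ldots,F_n)|$, and then proves the single superlevel estimate
\[
\mu\big(\mathsf{s}(G)>2^nK_2\lambda\big)\leq \sum_{j=1}^n \mu\big(\mathsf{s}_j(F_j)>\lambda^{1/p_j}\big)
\]
by choosing, for each $j$, a set $H_j$ with $\mathrm{outsup}_{Z\setminus H_j}\mathsf{s}_j(F_j)\leq\lambda^{1/p_j}$ and observing that $\mathsf{s}(G\cic{1}_{Z\setminus\cup H_j})(E)\leq 2^nK_2\lambda$ via \eqref{sizereq} and $n$ applications of \eqref{qmp}. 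Integrating this inequality in $\lambda$ and normalizing $\|F_j\|_{L^{p_j}}=1$ yields the constant $n2^nK_1K_2$ directly. This is also the actual route of \cite[Proposition~3.4]{DoThiele15}, so your ``Main obstacle'' paragraph misidentifies the scalar argument you are generalizing.
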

\begin{proof} By replacing each $F_j$ with $F_j\cic{1}_{W}$, it suffices to argue in the case $W=Z$. Let then   $G= |\Lambda(F_1,\ldots,F_n)|$. By \cite[Proposition 3.6]{DoThiele15}, we have
$$
\int_Z G(z) \, \d \upsilon(z) \leq K_1 \|G\|_{L^{1}(Z,\sigma,\mathsf{s})}.
$$ Hence, it will suffice to bound the left hand side of the above display by the right hand side of 
\eqref{hineq}. Arguing exactly as in   \cite[Proposition 3.4]{DoThiele15}, this is an easy consequence of the inequality
\begin{equation}
\label{hineq2}
\mu(\mathsf{s}(G) >2^nK_2\lambda) \leq \sum_{j=1}^n \mu\left(\mathsf{s}_j(F_j) >\lambda^{\frac{1}{p_j}}\right),
\end{equation}
which is proved as follows: in fact, by scaling, it suffices to prove the case $\lambda=1$. Given any $\eps>0$, we may find sets $H_j$, $j=1,\ldots,$ with $$\mu(H_j)\leq \mu(\mathsf{s}_j(F_j) >1) +\textstyle\eps,\qquad \displaystyle\sup_{E \in \cic{E}} \mathsf{s}_j(F_j\cic{1}_{Z\backslash H_j})(E) \leq 1. $$ Then, setting  $H=\cup_j H_j$, we have, by assumption and quasi-monotonicity \eqref{qmp} of the size
$$
  \mathsf{s}(G\cic{1}_{Z\backslash H})(E)\leq K_2 \prod_{j=1}^n \mathsf{s}_j(F_j{1}_{Z\backslash H})(E)
  \leq 2^n \prod_{j=1}^n \mathsf{s}_j(F_j{1}_{Z\backslash H_j})(E)\leq 2^nK_2 \qquad  \forall E \in \cic{E},
$$
whence
$$
\mu(\mathsf{s}(G) >2^n\lambda) \leq \mu(H) \leq  \mu(H_j) \leq n\eps + \sum_{j=1}^n \mu(\mathsf{s}_j(F_j) >1).
$$
Since $\eps>0$ is arbitrary, \eqref{hineq2} follows and the proof is complete.
\end{proof}

\subsection{Outer measure from tents; lacunary and overlapping sizes}
With reference to the previous subsections, we now adopt the standing notations
\begin{equation} \label{themeas}
(Z,{\upsilon})=\left(  \R^d_u \times (0,\infty)_t , \textstyle\frac{\d u \d t}{t^{d+1}}\right).  \end{equation}
Our outer measure  $\mu$  will be defined on the upper half space $Z$ via the premeasure ${\sigma}$ assigning to each tent over a cube $\mathsf{T}(Q)$ the volume of $Q$. See below for precise definitions.
The measure ${\upsilon}$ will not be employed in the definition of $\mu$ but rather in the construction of  suitable sizes involving the truncated cones $\Gamma^s(y)$ defined in \eqref{notcones}.
\subsubsection*{Tents}  For $(x,s) \in Z$ define
$$
\mathsf{T}(x,s)=\big\{(y,t) \in Z, 0 <t< s, |y-x| \leq s-t\big\}
$$
and set $\sigma(\mathsf{T}(x,s))=s^d$. In general, if $I \subset \R^d$ is a cube centered at $c_I$ and of sidelength $\ell (I)$, we write $\mathsf{T}(I)$ for $\mathsf{T}(c_I, \ell( I))$. The collection of tents forms a generating collection $\mathbf{E}$ of our outer measure space with premeasure $\sigma$. 
 \subsubsection*{Lacunary sizes} Let $\mathsf{T}=\mathsf{T}(x,s)$ be a tent and $\X$ be a  Banach space. For $F: Z\to \mathcal \X$ strongly-${\upsilon}$ measurable    and $0<q< \infty$,   we define the \emph{lacunary sizes}
\begin{align*}  &
\mathsf{s}_{q}^{\X}(F) (\mathsf{T})= \left(\frac{1}{s^d} \int_{B_{s}(x)} \|F\cic{1}_{\Gamma_{s}(y)}\|^q_{  S ({\upsilon},\X)} \d y\right)^{\frac1q},\qquad  0<q<\infty.\end{align*}
If no more than one Banach space is involved we discard the superscript and write $\mathsf{s}_{q}$ for $ \mathsf{s}_{q}^{\X}.$  
Notice that $\mathsf{s}_q$ defined above satisfy the properties, with $C=1$, of the definition in the previous paragraph: this simply follows from $1$-homogeneity and   triangle inequalities for the norms of $  S ({\upsilon},\X)$ and  $L^q(B_{s}(x))$.  Furthermore, the monotonicity property \eqref{qmp} is a consequence  of Lemma \ref{kw48}.  

{{} While} it is often convenient to work with a fixed $0<q<\infty$, the analogue of John-Nirenberg's inequality in this context, summarized in the lemma that follows, yields that outer $L^p$-spaces corresponding to different values of $q$ are equivalent. We omit the argument, which has countless analogues in literature: see for instance \cite[Theorem 2.7]{MuscSchlII}.
\begin{proposition} \label{JNp}Let $\X$ be a Banach space and $0<q_1<q_2<\infty$. Then, for all $F:Z\to \mathcal X$  strongly measurable and bounded
 $$
\sup_{\mathsf{T} }  \mathsf{s}_{q_2}(F) (\mathsf{T}) \leq C_{q_1,q_2}  \sup_{\mathsf{T} } \mathsf{s}_{q_1}(F) (\mathsf{T}), \qquad C_{q_1,q_2}= 12^{\frac{1}{q_1}}\cdot 8^{\frac{1}{q_2}}\cdot 4^{\frac {q_2}{q_1}} 
  $$
  the suprema being taken over all tents $\mathsf T(x,s)$ with $(x,s)\in Z$. As a consequence,
{{}  
$$
\|F\|_{L^{p}(Z,{\mathsf s_{q_2})}} \leq C_{q_1,q_2}  \|F\|_{L^{p}(Z,{\mathsf s_{q_1}})}.
$$}
 \end{proposition}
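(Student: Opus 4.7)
\textbf{Proof proposal for Proposition \ref{JNp}.}
The plan is a John-Nirenberg-type self-improvement argument, following in spirit the scalar template of \cite[Theorem~2.7]{MuscSchlII} but adapted to the Banach-valued $S(\upsilon,\X)$ setting. By the $1$-homogeneity of $\mathsf s_{q_j}$ (an immediate consequence of Lemma \ref{kw48}) we may normalize $A:=\sup_\mathsf{T} \mathsf{s}_{q_1}(F)(\mathsf{T})=1$, fix an arbitrary tent $\mathsf{T}_0=\mathsf{T}(x_0,s_0)$, and aim to bound $\mathsf{s}_{q_2}(F)(\mathsf{T}_0)$ by the constant $C_{q_1,q_2}$. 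Introducing the auxiliary function
\[
g(y):=\|F\cic{1}_{\Gamma_{s_0}(y)}\|_{S(\upsilon,\X)},\qquad y\in B_{s_0}(x_0),
\]
the hypothesis at $\mathsf T_0$ reads $s_0^{-d}\int_{B_{s_0}(x_0)} g^{q_1} \le 1$, and the goal is the corresponding $L^{q_2}$-bound $s_0^{-d}\int_{B_{s_0}(x_0)} g^{q_2} \le C_{q_1,q_2}^{q_2}$. I will obtain this by proving a layer-cake estimate with geometric decay of the super-level sets, then integrate.

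The central step will be the following distributional inequality: for a dilation parameter $K>1$ (ultimately $K=4^{1/q_1}$, which accounts for the $4^{q_2/q_1}$ factor in $C_{q_1,q_2}$) and for every $\lambda \ge \lambda_0$ with $\lambda_0\sim 1$,
\[
\bigl|\{y\in B_{s_0}(x_0): g(y)>K\lambda\}\bigr|\le \tfrac12\bigl|\{y\in B_{s_0}(x_0): g(y)>\lambda\}\bigr|.
\]
Iterating produces $|\{g>K^k \lambda_0\}|\le 2^{-k}\,|B_{s_0}(x_0)|$, whence the layer-cake representation
\[
\frac{1}{s_0^d}\int_{B_{s_0}(x_0)} g^{q_2}\,\d y=\frac{q_2}{s_0^d}\int_0^\infty \lambda^{q_2-1}\,|\{g>\lambda\}|\,\d\lambda
\]
is a convergent geometric series whose sum delivers the advertised $C_{q_1,q_2}$.

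The distributional step will be obtained by a stopping-time/Whitney argument. Given $\lambda$, cover the super-level set $E_\lambda:=\{y\in B_{s_0}(x_0): g(y)>\lambda\}$ as follows: for each $y\in E_\lambda$ choose the smallest $r_y\in (0,s_0]$ with
\[
\|F\cic{1}_{\Gamma_{s_0}(y)\setminus\Gamma_{r_y}(y)}\|_{S(\upsilon,\X)}\ \sim\ \lambda,
\]
which exists by the monotone-convergence properties of the $S(\upsilon,\X)$-norm on nested sets. Apply the Vitali covering lemma to extract a disjoint subfamily $\{B_{r_i}(y_i)\}_i$ whose $5$-fold dilates still cover $E_\lambda$. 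The geometric key is that if $y'\in 5B_{r_i}(y_i)$ then the ``tail'' cone $\Gamma_{s_0}(y')\cap\{t>c r_i\}$ differs from $\Gamma_{s_0}(y_i)\cap\{t>cr_i\}$ only on a region where the $S$-norm is controlled by a fixed fraction of $\lambda$, so $g(y')\gtrsim \lambda$ on a large portion of the ball. Conversely, the condition $\mathsf s_{q_1}(F)(\mathsf T(y_i, cr_i))\le 1$ applied to each sub-tent forces $|E_{K\lambda}\cap 5B_{r_i}(y_i)|\le (K\lambda)^{-q_1}|5B_{r_i}(y_i)|$, which summed over the disjoint family delivers the halving inequality above provided $K$ is chosen so that $(K/\text{const})^{q_1}\ge 2$.

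The main obstacle is the rigorous implementation of this stopping time in the Banach-valued setting, because the norm $S(\upsilon,\X)$ only satisfies the quasi-monotonicity of Lemma \ref{kw48} with a factor of $2$ (rather than the genuine monotonicity available when $\X$ is Hilbert). Each of the three constants $12^{1/q_1}$, $8^{1/q_2}$, $4^{q_2/q_1}$ in the final bound will keep track of a specific cost: a Vitali covering factor, the quasi-monotonicity loss when truncating in the $S(\upsilon,\X)$-norm, and the ratio $K^{q_2/q_1}$ from iterating the distributional step, respectively. Once the size-level comparison is proved, the stated consequence for outer $L^p$ is immediate from the very definition of the super-level measure $\mu(\mathsf s(F)>\lambda)$, since monotonicity of $\mathsf s_{q_1}\lesssim \mathsf s_{q_2}$ at the tent level transfers to an inclusion of the associated super-level sets with a controlled dilation in $\lambda$.
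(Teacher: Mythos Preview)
The paper itself omits this argument entirely, pointing to the standard John--Nirenberg template, and your outline follows that template in spirit. However, the specific distributional inequality you formulate is too weak to close the argument, and this is not a mere matter of presentation.

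You claim that with $K=4^{1/q_1}$ one has
\[
\bigl|\{y\in B_{s_0}(x_0): g(y)>K\lambda\}\bigr|\le \tfrac12\,\bigl|\{y\in B_{s_0}(x_0): g(y)>\lambda\}\bigr|,
\]
and that iterating and plugging into the layer-cake formula yields a convergent geometric series. But iterating gives $|\{g>K^k\lambda_0\}|\le 2^{-k}s_0^d$, which is only polynomial decay of order $\log_K 2 = q_1/2$ in the level. The layer-cake integral
\[
q_2\sum_{k\ge 0}\int_{K^k\lambda_0}^{K^{k+1}\lambda_0}\lambda^{q_2-1}\,2^{-k}s_0^d\,\d\lambda
= s_0^d\,\lambda_0^{q_2}(K^{q_2}-1)\sum_{k\ge 0}\bigl(K^{q_2}/2\bigr)^k
\]
diverges as soon as $K^{q_2}\ge 2$, i.e.\ $4^{q_2/q_1}\ge 2$, which holds for every $q_2\ge q_1/2$ and in particular for all $q_2>q_1$. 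So your bookkeeping cannot produce the stated constant, or any finite one.

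The standard fix (as in the BMO John--Nirenberg argument your reference \cite{MuscSchlII} is modeled on) is to run the stopping-time additively rather than multiplicatively: at each generation the ``top'' portion of the cone contributes a fixed amount $C_0$ to $g$ via the triangle inequality for the $S(\upsilon,\X)$-norm, and the total measure of the selected balls halves. One obtains $|\{g>\lambda_0+kC_0\}|\le 2^{-k}s_0^d$, i.e.\ genuine exponential decay, from which the $L^{q_2}$ bound follows by the layer-cake formula for every finite $q_2$. Your stopping construction and the geometric comparison of nearby truncated cones are the right ingredients; only the form of the recursion needs to be changed. The quasi-monotonicity factor $2$ from Lemma~\ref{kw48} enters exactly where you indicate, in comparing tops of nearby cones and in splitting $\Gamma_{s_0}(y)$ into top and bottom pieces.
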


 \subsubsection*{Overlapping sizes}
Fixing a triple of Banach spaces $\X_1,\X_2,{\X_3}$ in the same setting as Subsection \ref{ss11},  define the \emph{overlapping sizes}\footnote{We borrow the \emph{lacunary} (mean zero component of the paraproduct) and \emph{overlapping} (mean one component of the paraproduct)  terminology from the usual dictionary of time-frequency analysis, originating in \cite{LTC}.} 
\begin{align*} &
\mathsf{r}_q(F) (\mathsf{T})= \left(\frac{1}{s^d} \int_{B_{s}(x)} \|F\cic{1}_{\Gamma_{s}(y)}\|^q_{  R ({\upsilon},{\X_3})} \d y\right)^{\frac1q}, \qquad 0<q<\infty, \\ &   \mathsf r_\infty(F) (\mathsf{T})= \sup_{y\in  B_{s}(x) }\|F\cic{1}_{\Gamma_{s}(y)}\|_{R({\upsilon},{\X_3})}.
\end{align*}
Again, the fact that $\mathsf{r}_q$ defined above satisfy properties \eqref{sizelin} and \eqref{sizesubadd}, with $C=2$, of the definition of size follows from $1$-homogeneity and quasi-triangle inequalities for the $\mathcal R$-bounds, see Lemma \ref{Rbounds},     and  for $L^q(B_{s}(x))$.  The monotonicity property \eqref{qmp} is  a consequence  of the third assertion of  Lemma \ref{Rbounds}.
\subsubsection*{H\"older's inequality for   tent outer measure spaces}
Combining Lemma \ref{innercs} and \ref{innerholder}, respectively, with the classical H\"older inequality and Proposition \ref{holderDT} we reach the  Cauchy-Schwarz and  H\"older's inequalities below, involving Banach valued outer $L^p$ norms. We only prove Proposition \ref{outerholder}, the proof of Proposition \ref{outerCS} being extremely similar.
\begin{proposition} \label{outerCS}     Let $F_k:Z \to \X_k$, $k=1,2$ be strongly $\upsilon$-measurable, bounded functions and    ${\Lambda}=\Lambda_{z}: Z \to \B( \X_1,\X_2)  $ be a weakly measurable and  bounded family of bilinear continuous forms.  Then for any $W\subset Z$ Borel measurable, and $
1\leq q \leq \infty$,
$$
\int_{W} |\Lambda_{(u,t)} \left( F_1(u,t) ,F_2(u,t)  \right)|\, \frac{\d u \d t}{t}  \leq 64 \mathcal R(\{\Lambda_z:z \in W\})  \|F_1\|_{L^{p}(W,  \mathsf s_{q}^{\X_1})} \|F_2\|_{L^{p'}(W,\mathsf s_{q'}^{\X_2})}.   
$$
\end{proposition}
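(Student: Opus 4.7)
The plan is to reduce Proposition \ref{outerCS} to Proposition \ref{holderDT}, applied with $n=2$ to the sub-bilinear form $(x_1,x_2)\mapsto |\Lambda_z(x_1,x_2)|$, with a suitable auxiliary scalar-valued outer measure space playing the role of the ``target'' size, and with the required bilinear size estimate supplied by the Cauchy-Schwarz-type Lemma \ref{innercs} (rather than by Lemma \ref{innerholder}, as in the proof of Proposition \ref{outerholder}).

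First, I would introduce the auxiliary scalar-valued size
$$
\mathsf{s}(g)(\mathsf{T}(x,s)) := \frac{1}{s^d} \int_{B_s(x)} \int_{\Gamma_s(y)} |g(u,t)|\, \frac{\d u\, \d t}{t^{d+1}}\, \d y,
$$
for Borel $g:Z\to\mathbb{C}$. It is routine to verify that $\mathsf{s}$ satisfies \eqref{qmp}--\eqref{sizesubadd}, so that $(Z,\sigma,\mathsf{s})$ is a scalar-valued outer measure space in the sense of Section \ref{SecOM}. The Radon-Nikodym hypothesis of Proposition \ref{holderDT} for the Borel measure $\nu=t^{-1}\d u\, \d t$,
$$
\int_{\mathsf{T}(x,s)} |g|\, \frac{\d u\, \d t}{t} \leq K_1\, \sigma(\mathsf{T}(x,s))\, \mathsf{s}(g)(\mathsf{T}(x,s)),
$$
follows from a Fubini computation: for $(u,t)\in\mathsf{T}(x,s)$ one has $B_t(u)\subset B_s(x)$ and $(u,t)\in\Gamma_s(y)$ precisely when $y\in B_t(u)$, so the inner $y$-integral contributes a factor comparable to $t^d$, giving a dimensional constant $K_1$.

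The main technical step is to verify the bilinear size estimate \eqref{sizereq}: for strongly measurable bounded $F_k:Z\to\X_k$,
$$
\mathsf{s}\bigl(z\mapsto |\Lambda_z(F_1(z),F_2(z))|\bigr)(\mathsf{T}(x,s)) \leq K_2\, \mathsf{s}_q^{\X_1}(F_1)(\mathsf{T}(x,s))\, \mathsf{s}_{q'}^{\X_2}(F_2)(\mathsf{T}(x,s)),
$$
with $K_2 \leq 2\,\mathcal{R}_{\X_1,\X_2}(\{\Lambda_z : z\in W\})$. Here I would fix $y\in B_s(x)$ and apply Lemma \ref{innercs} to the functions $F_k\cic{1}_{\Gamma_s(y)}$, noting that the restriction of the $\Lambda_z$ family to the support of these functions is indexed by $z\in W$ once we eventually multiply by $\cic{1}_W$. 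This yields
$$
\int_{\Gamma_s(y)} |\Lambda_z(F_1,F_2)|\, \frac{\d u\, \d t}{t^{d+1}} \leq 2\,\mathcal{R}\, \|F_1 \cic{1}_{\Gamma_s(y)}\|_{S(\upsilon,\X_1)}\, \|F_2\cic{1}_{\Gamma_s(y)}\|_{S(\upsilon,\X_2)}.
$$
Averaging over $y\in B_s(x)$ against $s^{-d}\d y$ and applying H\"older's inequality in $y$ with conjugate exponents $q,q'$ (and the natural modification if $q\in\{1,\infty\}$) produces exactly the product of sizes on the right-hand side.

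With these ingredients in place, Proposition \ref{holderDT} immediately delivers the conclusion, with constant $n\cdot 2^n K_1 K_2$; after absorbing the dimensional factor $K_1$ and the factor $2$ from Lemma \ref{innercs} one obtains the stated bound with constant $64$, the precise numerical value being inessential. The delicate point throughout is the matching of the cone-averaged $S$-norms to the sizes $\mathsf{s}_q^{\X_1}$ and $\mathsf{s}_{q'}^{\X_2}$, i.e.\ making sure that the scalar size $\mathsf{s}$ is chosen so that the Radon-Nikodym property and the bilinear decomposition both hold with absolute dimensional constants; once the above $\mathsf{s}$ is fixed, the argument is essentially a transcription of the proof of Proposition \ref{outerholder} with Lemma \ref{innercs} replacing Lemma \ref{innerholder}.
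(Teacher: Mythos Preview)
Your proposal is correct and follows essentially the same route the paper indicates (the proof of Proposition \ref{outerCS} is omitted there, with the remark that it is ``extremely similar'' to that of Proposition \ref{outerholder}). The only cosmetic difference is your choice of auxiliary scalar size: you take the cone-averaged size $\mathsf{s}(g)(\mathsf{T}(x,s))=s^{-d}\int_{B_s(x)}\int_{\Gamma_s(y)}|g|\,t^{-d-1}\,\d u\,\d t\,\d y$, which makes the bilinear size estimate \eqref{sizereq} immediate but requires the Fubini computation to verify the Radon--Nikodym hypothesis of Proposition \ref{holderDT}; the paper instead uses the tent-average size $\mathsf{s}(G)(\mathsf{T}(x,s))=s^{-d}\int_{\mathsf{T}(x,s)}|G|\,t^{-1}\,\d u\,\d t$ (the $S_1$ of \cite{DoThiele15}), for which Radon--Nikodym is trivial with $K_1=1$ and the Fubini step is absorbed into verifying \eqref{sizereq}. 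Both arrangements use exactly the same ingredients (Proposition \ref{holderDT}, Lemma \ref{innercs}, Fubini, and classical H\"older in $y$), so this is not a genuinely different argument.
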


\begin{proposition} \label{outerholder}     Let $F_k:Z \to \X_k$, $k=1,2$, $F_3: Z\to {\X_3}$ be strongly $\upsilon$-measurable, bounded functions and ${\Lambda}=\Lambda_{z}: Z \to \B( \X_1,\X_2,{\X_3}) $ be a weakly measurable, bounded family of trilinear continuous forms. Then for any $W\subset Z$ Borel measurable, 
\begin{align*}
&\quad \int_{W} |\Lambda_{(u,t)} \left( F_1(u,t) ,F_2(u,t) , F_3(u,t)  \right)|\, \frac{\d u \d t}{t} \\ &\leq 2^9 \mathcal R(\{\Lambda_z:z \in W\})  \|F_1\|_{L^{p_1}(W,  \mathsf s_{q_1}^{\X_1})} \|F_2\|_{L^{p_2}(W,\mathsf s_{q_2}^{\X_2})} \|F_3\|_{L^{p_3}(W,\mathsf r_{q_3}^{{\X_3}})} 
\end{align*}
for all H\"older tuples 
$$
1\leq p_j,q_j\leq \infty, \qquad \textstyle  \frac{1}{p_1}+\frac{1}{p_2}+\frac{1}{p_3}= \frac{1}{q_1}+\frac{1}{q_2}+\frac{1}{q_3}=1.
$$
\end{proposition}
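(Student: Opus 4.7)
The plan is to invoke the abstract outer H\"older inequality of Proposition \ref{holderDT} on the tent outer measure space $(Z,\sigma)$, the only nontrivial ingredient being the pointwise size estimate \eqref{sizereq} for a suitable scalar-valued auxiliary size. I would take the latter to be the ``$L^1$-size''
\[
\mathsf{s}_1(g)(\mathsf{T}(x,s)) := \frac{1}{s^d}\int_{B_s(x)} \|g\cic{1}_{\Gamma_s(y)}\|_{L^1(\upsilon)}\, \d y,
\]
defined on scalar $g:Z\to\mathbb{C}$. A quick Fubini computation verifies the first hypothesis of Proposition \ref{holderDT}: for $(u,t)\in\mathsf{T}(x,s)$ one has $B_t(u)\subset B_s(x)$, so the solid angle $|\{y\in B_s(x):(u,t)\in\Gamma_s(y)\}|=c_d t^d$ exactly compensates the $t^{d+1}$ weight in $\d\upsilon$ against $\d u\,\d t/t$, yielding $\int_{\mathsf{T}(x,s)}|g|\,\d u\,\d t/t \lesssim \sigma(\mathsf{T}(x,s))\,\mathsf{s}_1(g)(\mathsf{T}(x,s))$ with a purely dimensional constant $K_1$.

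The heart of the argument is the pointwise size inequality for $g(z)=|\Lambda_z(F_1(z),F_2(z),F_3(z))|$. I would fix a tent $\mathsf{T}=\mathsf{T}(x,s)$ and, for each $y\in B_s(x)$, apply Lemma \ref{innerholder} on the cone $\Gamma_s(y)$ (intersected with $W$) to bound
\[
\|\Lambda(F_1,F_2,F_3)\cic{1}_{\Gamma_s(y)}\|_{L^1(\upsilon)}\leq 2\mathcal R\,\bigl\|F_1\cic{1}_{\Gamma_s(y)}\bigr\|_{S(\upsilon,\X_1)}\bigl\|F_2\cic{1}_{\Gamma_s(y)}\bigr\|_{S(\upsilon,\X_2)}\bigl\|F_3\cic{1}_{\Gamma_s(y)}\bigr\|_{R(\upsilon,\X_3)},
\]
where $\mathcal R=\mathcal R(\{\Lambda_z:z\in W\})$. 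Integrating in $y\in B_s(x)$ and applying the classical scalar H\"older inequality with exponents $(q_1,q_2,q_3)$ satisfying $\sum 1/q_j=1$ produces the product of $\bigl(\int_{B_s(x)}\|\cdot\|^{q_j}\d y\bigr)^{1/q_j}=s^{d/q_j}\mathsf{s}_{q_j}^{\X_j}(F_j)(\mathsf{T})$ (resp.\ $s^{d/q_3}\mathsf{r}_{q_3}^{\X_3}(F_3)(\mathsf{T})$); since $\sum d/q_j=d$, dividing by $s^d$ leaves
\[
\mathsf{s}_1\bigl(z\mapsto |\Lambda_z(F_1,F_2,F_3)(z)|\bigr)(\mathsf{T}) \leq 2\mathcal R\,\mathsf{s}_{q_1}^{\X_1}(F_1)(\mathsf{T})\,\mathsf{s}_{q_2}^{\X_2}(F_2)(\mathsf{T})\,\mathsf{r}_{q_3}^{\X_3}(F_3)(\mathsf{T}),
\]
which is precisely \eqref{sizereq} with $K_2=2\mathcal R$.

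With both hypotheses verified, a direct appeal to Proposition \ref{holderDT} with $n=3$, $K_2=2\mathcal R$, and $K_1$ the above dimensional constant will deliver the claimed bound, and a quick bookkeeping check of $n\cdot 2^n\cdot K_1 \cdot K_2$ shows that one can comfortably absorb all numerical factors into the stated constant $2^9$. Endpoint cases in which some $q_j=\infty$ follow the same template: the $L^{q_j}(B_s(x))$ average in the size definitions is replaced by an essential supremum and the final H\"older step uses the $L^1$--$L^\infty$ form instead. The principal obstacle in executing this plan is the clean identification of the auxiliary $L^1$-size and the geometric Fubini calculation relating integrals over the tent $\mathsf{T}(x,s)$ to averaged integrals over the cones $\{\Gamma_s(y):y\in B_s(x)\}$; once that is in place, everything else is a transparent assembly of the inner H\"older Lemma \ref{innerholder}, classical H\"older, and the outer H\"older Proposition \ref{holderDT}.
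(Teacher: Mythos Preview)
Your proof is correct and follows essentially the same route as the paper. The only cosmetic difference is the choice of auxiliary scalar size: the paper takes the direct tent $L^1$ size $\mathsf s(G)(\mathsf T(x,s))=s^{-d}\int_{\mathsf T(x,s)}|G|\,\d u\,\d t/t$ (so that the first hypothesis of Proposition~\ref{holderDT} is trivial) and then uses the Fubini step to pass to the conical average before applying Lemma~\ref{innerholder} and classical H\"older, whereas you take the conical $L^1$ average itself as the size and use the same Fubini computation to verify the first hypothesis. The ingredients and their assembly are identical.
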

\begin{proof}As usual, it suffices (by possibly replacing $F_j$ by their restrictions to $W$) to prove  the case $W=Z$. Let $\mathsf s$ be the  size  defined on scalar functions by
$$
\mathsf s(G) (\mathsf{T}(x,s)) := \frac{1}{s^d} \int_{\mathsf{T}(x,s)} |G(u,t)|\, \frac{\d u \d t}{t}.
$$ 
This is the $S_1$ appearing in \cite{DoThiele15}.
By Fubini's theorem, we have
$$
\mathsf s(G) (\mathsf{T}(x,s)) \leq \frac{1}{s^d} \int_{B(x,s)} \int_{\Gamma_s(y)} |G(u,t)|\, \frac{\d u \d t}{t^{d+1}}\, \d y,
$$
Now if $G=\Lambda(F_1,F_2,F_3)$, applying in cascade Lemma \ref{innerholder} and the classical H\"older inequality with tuple $q_j$, the above display is bounded by $$
2 \mathcal R(\{\Lambda_z:z \in Z\}) \mathsf s_{q_1}^{\X_1}(F_1) (\mathsf{T}(x,s))\mathsf s_{q_2}^{\X_2}(F_2) (\mathsf{T}(x,s))\mathsf r_{q_3}^{{\X_3}}(F_3) (\mathsf{T}(x,s)).
$$
This verifies the assumption \eqref{sizereq}. Applying Proposition \ref{holderDT} with tuple $p_j$, we complete the proof.
\end{proof}

\section{Lacunary Carleson embedding theorems}\label{SecLCET}
This section, as well as the next,  contain respectively \emph{lacunary} and \emph{overlapping} Carleson embedding type results, in the outer measure theory formulation, for the upper half-space extensions of Banach-valued functions defined in \eqref{emb}.

 Throughout this section, we work with a fixed UMD   space $\X$, and indicate by 
 $\tau=\tau(\X)$ the  (nontrivial) type of $\X$. We write $\mathsf{s}_q$ for ${\mathsf s}_q^\X$, and $L^{p} ({\mathsf s_q})$ for the outer $L^p$ spaces $L^{p}(\R^{d+1}_+,{\mathsf s_q})$ constructed in Section \ref{SecOM}.
\begin{proposition}[Carleson Embedding Theorems] \label{cet}
Let $\phi:\R^d \to \R $ be a function of the class  $\cic{\Phi}$ defined by \eqref{decay}. Assume that $\phi$ has mean zero. For $f \in \mathcal{S}(\R^d; \X)$, $\alpha \in B_1(0),  0<\beta\leq 1$, define 
$$
F_{\phi,\alpha,\beta}(f)(u,t)= f* \phi_{\beta t} (u+\alpha t).$$ For $1\leq q<\infty$, set \begin{equation}
\label{theeps0}
\eps_0=\eps_0(\mathcal X,q):=   \frac{1}{\min\{q,\tau\}}-\frac12 .
\end{equation}
Then, for all $\eps>\eps_0$,
\begin{align}
\label{embSq} &\|F_{\phi,\alpha,\beta} (f)\|_{L^{\infty}({\mathsf s_q})} \lesssim \beta^{-d\eps}\|f\|_{\mathrm{BMO}(\R^d; \X)},  \\
& \|F_{\phi,\alpha,\beta} (f)\|_{L^{p} ({\mathsf s_q})} \lesssim  \beta^{-d\eps} \|f\|_{L^p(\R^d; \X)},\qquad q<p<\infty,\label{LpSqemb}
\\ & 
\label{LqinfSq}
\|F_{\phi,\alpha,\beta} (f)\|_{L^{q,\infty} ({\mathsf s_q})} \lesssim  \beta^{-d\eps} \|f\|_{L^q(\R^d;\X)},
\\
&\|F_{\phi,\alpha,\beta} (f)\|_{L^{p} ({\mathsf s_q})} \lesssim  \beta^{-d(\theta+\eps)} \|f\|_{L^p(\R^d;\X)},\quad 1<p<q, \quad \theta=\frac{1/p- 1/q}{1- 1/q}, \label{LpSqbelow}\\
&\|F_{\phi,\alpha,\beta} (f)\|_{L^{1,\infty} ({\mathsf s_q})} \lesssim    \beta^{-d(1+\eps)} \|f\|_{L^1(\R^d; \X)}.\label{L1Sqemb}
\end{align}
The implied constants may depend on $\eps,q,p$, and on the UMD character of $\X$ only.
\end{proposition}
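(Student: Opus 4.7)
The plan is to reduce all five estimates to two endpoint bounds -- the BMO embedding \eqref{embSq} and the weak $L^{q,\infty}$ embedding \eqref{LqinfSq} -- and then recover \eqref{LpSqemb}, \eqref{LpSqbelow} and \eqref{L1Sqemb} by Marcinkiewicz interpolation (Proposition \ref{propint}) combined with a vector-valued Calder\'on--Zygmund decomposition for the subcritical range. By the John--Nirenberg-type Proposition \ref{JNp}, the integrability exponent $q$ appearing in the inner size $\mathsf s_q$ may be chosen at will in each step, which I will exploit freely to work at the most convenient integrability.

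For \eqref{embSq} I would fix a tent $\mathsf T(x,s)$ and split $f=(f-\l f\r_{B^*})\cic{1}_{B^*}+(f-\l f\r_{B^*})\cic{1}_{(B^*)^c}$, where $B^*$ is a fixed dilate of $B_s(x)$; the additive constant $\l f\r_{B^*}$ is annihilated by $F_{\phi,\alpha,\beta}$ because $\widehat\phi(0)=0$. The local piece is controlled, after unwinding the definition of $\mathsf s_q$ and performing the change of variables $t\mapsto \beta t$ in the cone, by a vector-valued conical square function bound in $L^q(B_s(x);\X)$; via the Gaussian comparison \eqref{comparison} this reduces to a UMD Littlewood--Paley inequality whose $\beta$-dependence works out to the factor $\beta^{-d\eps}$ for any $\eps>\eps_0$, and whose right-hand side is $\lesssim |B^*|^{1/q}\|f\|_{\mathrm{BMO}(\R^d;\X)}$. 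The tail $(B^*)^c$ contributes $\lesssim\|f\|_{\mathrm{BMO}(\R^d;\X)}$ after dyadic annular decomposition, using the pointwise decay \eqref{decay} of $\phi$. For \eqref{LqinfSq} the exceptional set is obtained from a Whitney/Calder\'on--Zygmund decomposition of $\{M(\|f\|_\X)>c\lambda\}$, whose outer measure is $\lesssim\lambda^{-q}\|f\|_{L^q(\X)}^q$; outside this set, the same randomized conical square function bound as above, applied through \eqref{comparison} and Kahane--Khintchine, yields $\mathsf s_q(F_{\phi,\alpha,\beta}(f))(\mathsf T)\lesssim\lambda$ for every tent $\mathsf T$ disjoint from the exceptional set.

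Estimate \eqref{LpSqemb} for $q<p<\infty$ is then obtained by interpolating \eqref{embSq} and \eqref{LqinfSq} through Proposition \ref{propint} (with the BMO endpoint in place of $L^\infty$). For \eqref{L1Sqemb} I would perform an $\X$-valued Calder\'on--Zygmund decomposition $f=g+\sum_Q b_Q$ at level $\lambda$, with $\|g\|_{L^q(\X)}^q\lesssim\lambda^{q-1}\|f\|_{L^1(\X)}$, each $b_Q$ supported on the cube $Q$ with mean zero, and $\sum_Q|Q|\lesssim\lambda^{-1}\|f\|_{L^1(\X)}$; the good part is handled by \eqref{LqinfSq}, while for the bad part the exceptional set is taken as the union of tents over $\beta^{-1}$-dilated Whitney cubes, whose total outer measure is $\lesssim\beta^{-d}\lambda^{-1}\|f\|_{L^1(\X)}$. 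On its complement, the zero mean of each $b_Q$ combined with the decay \eqref{decay} of $\phi$ gives a pointwise bound on $F_{\phi,\alpha,\beta}(b_Q)$ that sums geometrically. The exponent $\beta^{-d(1+\eps)}$ records the $\beta$-dilation of the Whitney cubes plus the $\beta^{-d\eps}$ already paid in the good-part bound. Finally \eqref{LpSqbelow} for $1<p<q$ follows by another Marcinkiewicz interpolation between \eqref{L1Sqemb} and \eqref{LqinfSq}, which, after a short computation, produces exactly the exponent $\theta=(1/p-1/q)/(1-1/q)$ in the $\beta$-power.

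The main obstacle is the randomized conical square function inequality driving both the local part of Step~1 and Step~2, in which the inner norm is the Rademacher-based $S(\upsilon,\X)$-norm rather than a Bochner $L^2$-norm. Via the Gaussian comparison \eqref{comparison} this reduces to a scalar Littlewood--Paley-type estimate, but the reduction costs a power $\beta^{-d\eps}$ controlled by the type $\tau=\tau(\X)$ through the type-$\tau$ inequality applied to randomized $\ell^2$-sums. This is precisely the mechanism by which the threshold $\eps_0=1/\min\{q,\tau\}-1/2$ in \eqref{theeps0} appears, and why no improvement below $\eps_0$ is available for general UMD spaces. Every remaining step is a routine interpolation or Calder\'on--Zygmund wrapper around this single randomized square function bound.
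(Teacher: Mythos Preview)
Your overall scheme---reduce to the BMO endpoint \eqref{embSq}, a weak endpoint, and the weak-$L^1$ bound \eqref{L1Sqemb}, then interpolate---matches the paper's, and you correctly identify that the analytic core is a UMD-valued conical square function inequality whose $\beta$-dependence carries the type constant $\tau(\X)$. In the paper this inequality is quoted as Proposition~\ref{HyNe} (from \cite{HytNeePor08}), not reproved; your reduction via \eqref{comparison} amounts to the same thing. The paper also performs a preliminary change of variables (``tilting'', Subsection~\ref{sstilt}) to reduce $F_{\phi,\alpha,\beta}$ to $F_{\phi,0,1}$ with tilted tents; your ``change of variables $t\mapsto\beta t$ in the cone'' is an informal version of this. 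For \eqref{embSq} and \eqref{L1Sqemb} your outlines are essentially the paper's arguments.

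The real divergence is at the weak-$L^q$ estimate \eqref{LqinfSq}. The paper does \emph{not} use a Whitney decomposition of $\{M(\|f\|_\X)>c\lambda\}$; instead it takes the exceptional set to be the union of all cones over balls $B_i$ on which the size functional itself exceeds $\lambda$, runs a Besicovitch covering, and bounds the total measure directly by the \emph{global} $L^q$ conical square function bound of Proposition~\ref{HyNe}. This yields the stated $\beta^{-d\eps}$. Your proposal---remove tents over Whitney cubes of $\{M(\|f\|_\X)>c\lambda\}$ and assert that ``the same randomized conical square function bound'' gives size $\lesssim\lambda$ outside---does not work as written: the BMO localization argument needs control of the local $L^q$ norm of $f$ on the shadow of each tent, and boundedness of $M(\|f\|_\X)$ only gives local $L^1$ control. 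If you salvage this by doing a full Calder\'on--Zygmund decomposition at level $\lambda$ (good part $g$ with $\|g\|_\infty\lesssim\lambda$, bad part $b$ handled by the mean-zero/decay mechanism of the weak-$L^1$ step), the bad-part argument forces the Whitney tents to be $\beta^{-1}$-dilated, so the exceptional set acquires measure $\lesssim\beta^{-d}\lambda^{-q}\|f\|_q^q$. Combined with the good-part size bound $\lesssim\beta^{-d\eps}\lambda$, this produces $\|F_{\phi,\alpha,\beta}(f)\|_{L^{q,\infty}(\mathsf s_q)}\lesssim\beta^{-d(\eps+1/q)}\|f\|_q$, strictly worse than the claimed $\beta^{-d\eps}$. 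To recover the stated exponent you need the Besicovitch/global-$L^q$ route the paper takes.
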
 In fact, a byproduct of the proof of  \eqref{L1Sqemb} (compare with   definition \eqref{weakL1future} below) is 
\begin{proposition}[Carleson embedding theorem, local version] \label{cetloc} Let  $f\in \mathcal S(\R^d; \X)$ and set  \begin{equation}
\label{maxfctbd} \cic{Q}_{f,\lambda}= \textrm{\emph{max.\ dyad.\ cubes }} Q \textrm{\emph{ s.t. }}
 3Q\subset \{x\in\R^d: \mathrm{M}(\|f\|_{\X})(x)>\lambda\}
\end{equation}
  {{} where $\mathrm M$ stands for the usual Hardy-Littlewood maximal operator on $\R^d$}. Denote {{} \begin{equation}
\label{exccoll}\mathsf T_{\beta}(\cic{Q}_{f,\lambda}):=\bigcup_{Q \in \cic{Q}_{f,\lambda}}  \mathsf T(c_Q, 3\beta^{-1}\ell(Q)).\end{equation} }
  Under the same assumptions of Proposition \ref{cet}, for all $\eps>\eps_0$ as in   \eqref{theeps0} and $1<q<\infty$ 
$$\|F_{\phi,\alpha,\beta} (f)\|_{L^{\infty}(Z\setminus \mathsf T_ {\beta}(\cic{Q}_{f,\lambda}),\mathsf{s}_q)} \lesssim \beta^{-d\eps}\lambda$$
with implicit constant depending on $\eps,q$ and on the UMD character of $\X$ only. 
\end{proposition}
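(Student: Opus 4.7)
The plan is to extract the claimed local bound from the same Calder\'on--Zygmund decomposition that underlies the weak-$L^1$ embedding \eqref{L1Sqemb}. Set $\Omega := \{M(\|f\|_\X) > \lambda\}$; by openness and the dyadic structure, $\Omega = \bigsqcup_{Q \in \cic{Q}_{f,\lambda}} Q$. Split $f = g + b$ where
\[
g := f \cic{1}_{\Omega^c} + \sum_{Q \in \cic{Q}_{f,\lambda}} \langle f \rangle_Q \cic{1}_Q, \qquad b := \sum_{Q \in \cic{Q}_{f,\lambda}} b_Q, \quad b_Q := (f - \langle f \rangle_Q) \cic{1}_Q.
\]
Lebesgue differentiation yields $\|f(x)\|_\X \le \lambda$ a.e.\ on $\Omega^c$, while the maximality in the selection of $\cic{Q}_{f,\lambda}$ (the parent cube $\widehat Q$ satisfies $3\widehat Q \not\subset \Omega$, hence contains a point $x^*$ with $M(\|f\|_\X)(x^*) \le \lambda$, and $Q$ sits in an $O(\ell(Q))$-ball around $x^*$) provides $\|\langle f \rangle_Q\|_\X \lesssim \lambda$ and $\|b_Q\|_{L^1(\X)} \lesssim \lambda |Q|$. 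Consequently $\|g\|_{L^\infty(\R^d;\X)} \lesssim \lambda$, hence $\|g\|_{\mathrm{BMO}(\R^d;\X)} \lesssim \lambda$.

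The good-part contribution is immediate: by quasi-subadditivity \eqref{sizesubadd} it suffices to bound the contributions of $g$ and $b$ separately, and the BMO embedding \eqref{embSq} of Proposition \ref{cet} directly delivers $\|F_{\phi,\alpha,\beta}(g)\|_{L^\infty(\mathsf s_q)} \lesssim \beta^{-d\eps}\lambda$. Quasi-monotonicity \eqref{qmp} of the size transfers this to the restricted norm over $Z \setminus \mathsf T_\beta(\cic{Q}_{f,\lambda})$.

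For the bad part, I exploit the mean-zero property of each $b_Q$: writing $w := u + \alpha t$,
\[
F_{\phi,\alpha,\beta}(b_Q)(u,t) = \int_Q b_Q(z) \bigl[ \phi_{\beta t}(w-z) - \phi_{\beta t}(w - c_Q) \bigr] \d z,
\]
so the mean value theorem applied to $\phi_{\beta t}$, together with the Schwartz decay of $\nabla \phi$, yields the pointwise estimate
\[
\|F_{\phi,\alpha,\beta}(b_Q)(u,t)\|_\X \lesssim \lambda \cdot \frac{\ell(Q)^{d+1}}{(\beta t)^{d+1}} \Bigl(1 + \frac{|w - c_Q|}{\beta t}\Bigr)^{-N}.
\]
The geometric content of the exclusion $(u,t) \notin \mathsf T(c_Q, 3\beta^{-1}\ell(Q))$ is precisely the inequality $\ell(Q) \lesssim \beta(t + |w - c_Q|)$ (checked by splitting according to whether $t \ge 3\beta^{-1}\ell(Q)$ or $|u - c_Q| > 3\beta^{-1}\ell(Q) - t$), which reorganizes the right-hand side into a kernel summable over the disjoint family $\cic{Q}_{f,\lambda}$ and integrable over cones. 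Inserting the aggregated pointwise bound into the cone-average defining $\mathsf s_q$ produces the desired $\mathsf s_q$-size bound of order $\lambda$, possibly after invoking Proposition \ref{JNp} to pass to a convenient value of $q$.

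The main obstacle is precisely the bad-part step, namely translating the pointwise estimate into a genuine $\mathsf s_q$-size bound on $Z \setminus \mathsf T_\beta(\cic{Q}_{f,\lambda})$ that is uniform in $\beta$ and in the tent $\mathsf T$. This requires a careful case split between the ``scale above $Q$'' regime ($t \ge 3\beta^{-1}\ell(Q)$, where the cancellation against $c_Q$ alone supplies the decay) and the ``cone displaced'' regime ($|w - c_Q| \gtrsim \beta^{-1}\ell(Q)$, where the Schwartz tails dominate), followed by a clean summation over the dyadic scales of the cubes in $\cic{Q}_{f,\lambda}$; the factor $3\beta^{-1}$ in the definition of $\mathsf T_\beta$ is exactly what is needed to ensure that $\phi_{\beta t}$ never ``resonates'' with the scale of $Q$ outside the excluded region.
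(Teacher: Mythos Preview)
Your approach is essentially the same as the paper's: Calder\'on--Zygmund decomposition at level $\lambda$, the good part handled by the BMO embedding \eqref{embSq}, the bad part by the mean-zero cancellation of each $b_Q$ combined with the geometry of the excluded tents, and finally John--Nirenberg (Proposition \ref{JNp}) to pass from a convenient $q$ to general $q$. The paper carries out exactly this scheme in its weak-$L^1$ proof (Subsection 8.5), and states explicitly that Proposition \ref{cetloc} is a byproduct.

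One presentational difference worth noting: the paper first reduces to compactly supported $\phi$ (via the standard decomposition of \cite[Lemma 3.1]{MPTT06}) and passes to the tilted sizes $\mathsf s_{q,\alpha,\beta}$ of Subsection \ref{sstilt}, which replaces your Schwartz-tail factor $(1+|w-c_Q|/(\beta t))^{-N}$ and the anticipated case split by a clean support restriction. In those coordinates the bad-part estimate becomes the single cone bound \eqref{Aug152}, from which the $\mathsf s_1$ size estimate \eqref{Aug151} follows by direct integration and the disjointness of the $Q\in\cic{Q}_{f,\lambda}$, avoiding the ``careful case split'' you flag as the main obstacle. Your direct route works too, but the compact-support/tilting reduction is what makes the computation short.
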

The lacunary Carleson embedding theorems above  can be thought of  as local version of Proposition \ref{HyNe} below, a result proved in  \cite{HytNeePor08}, which is relied upon in the proof. This proposition  is a UMD-valued extension of the  Hilbert-valued tent space estimates by Harboure, Torrea, and Viviani \cite[Theorem 2.1]{HarTorViv91}. \begin{proposition}\label{HyNe}
Let $\X$ be a UMD space of type $\tau$, and $f\in \mathcal S(\R^d;\X)$, $1<q<\infty$. For all $\eps>\eps_0$, where $\eps_0$ is as in \eqref{theeps0}, there holds
\[
\left(\int_{\R^d}\|F_{\phi,0,1}(f) \cic{1}_{\Gamma^{1/\beta}(y)}\|^q_{ S ({\upsilon},\X)}\,\d y\right)^{1/q}\lesssim_q \beta^{-d\left(\frac12+\eps\right)}\|f\|_{L^q(\R^d;\X)}.
\]
\end{proposition}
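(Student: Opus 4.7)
This is a vector-valued tent-space embedding, so my strategy is to first settle the baseline case $\beta = 1$ using UMD Littlewood--Paley theory, and then pass to general $\beta \in (0,1)$ by an aperture-enlargement argument that draws on the type $\tau = \tau(\X)$ of the target space.

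\textbf{Base case.} For $\beta = 1$ the quantity on the left is a $q$-th moment of the vector-valued Lusin area function $y \mapsto \|F_{\phi,0,1}(f)\cic{1}_{\Gamma(y)}\|_{S(\upsilon,\X)}$ associated to the cancellative bump $\phi$. By Kahane--Khintchine one may freely pass between second and $q$-th moments in the Rademacher averaging defining the $S$-norm, and then Bourgain--McConnell's UMD-valued Littlewood--Paley square function theorem gives the $L^q(\R^d)$-equivalence of this area function with $\|f\|_{L^q(\R^d;\X)}$. This settles the inequality at $\beta = 1$ with no loss in $\eps$.

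\textbf{Aperture enlargement.} For $\beta < 1$, decompose the wide cone $\Gamma^{1/\beta}(y)$ into disjoint pieces each sitting inside a unit-aperture cone $\Gamma(z_i)$, where at each dyadic height $t$ the centers $z_i$ form a $\beta^{-d}$-cardinality net of $B(y, t/\beta)$. The type $\tau$ of $\X$, combined with the $\mathrm{Rad}(\X)$-description of the $S(\upsilon,\X)$-norm via an orthonormal basis, yields the subadditivity
\[
\|F \cic{1}_{\bigsqcup_i E_i}\|_{S(\upsilon,\X)} \lesssim \Big(\sum_i \|F \cic{1}_{E_i}\|^{\tau}_{S(\upsilon,\X)}\Big)^{1/\tau}.
\]
Applied to our covering, this produces the pointwise estimate
\[
\|F_{\phi,0,1}(f)\cic{1}_{\Gamma^{1/\beta}(y)}\|^{\tau}_{S(\upsilon,\X)} \lesssim \beta^{-d}\, \mathrm{M}\Big(z \mapsto \|F_{\phi,0,1}(f)\cic{1}_{\Gamma(z)}\|^{\tau}_{S(\upsilon,\X)}\Big)(y),
\]
with $\mathrm M$ the Hardy--Littlewood maximal operator. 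When $q > \tau$ I take $L^{q/\tau}$-norms, use boundedness of $\mathrm{M}$ on $L^{q/\tau}$, and apply the base case; this delivers the factor $\beta^{-d/\tau}$, absorbed by $\beta^{-d(1/2+\eps)}$ precisely when $\eps > \tfrac{1}{\tau} - \tfrac{1}{2}$. When $q \le \tau$ the maximal bound at level $q/\tau$ is unavailable, so I would establish a weak-type $(1,1)$ endpoint via a Calder\'on--Zygmund decomposition of $f$ at height $\lambda$, handling the bad part on the complement of the stopping cubes at cost $\beta^{-d}$, and then interpolate with the $q_0 > \tau$ case via Proposition \ref{propint}; the interpolation weight produces the threshold $\tfrac{1}{q} - \tfrac{1}{2}$.

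\textbf{Main obstacle.} The principal difficulty is the pointwise covering inequality above: the $S(\upsilon,\X)$-norm is a subtle randomized $\gamma$-type norm, and a bare partition combined with a Rademacher-level application of type $\tau$ must be executed so that the loss is exactly $\beta^{-d}$ after the $\tau$-th power, not worse. Carrying this $\ell^\tau$-subadditivity through with a constant independent of the partition is where the UMD/type $\tau$ hypothesis is used in its sharp form, mirroring the classical Hilbert argument with $\tau = 2$ but now at the level of vector-valued Rademacher sums.
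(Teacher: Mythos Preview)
The paper does not prove this proposition at all: immediately after the statement it reads ``We send to \cite{HytNeePor08} and references therein for the proof.'' So there is no in-paper argument to compare against; the result is quoted from Hyt\"onen--van Neerven--Portal as a black box.

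Your outline is in the right spirit---the $\beta=1$ case is indeed the UMD tent-space embedding of \cite{HytNeePor08}, and the appearance of the type $\tau$ in the aperture exponent is exactly the mechanism behind $\eps_0$. The $\ell^\tau$-subadditivity of the $S(\upsilon,\X)$-norm under disjoint decompositions that you invoke is correct (it follows from type $\tau$ applied to the independent Gaussian blocks coming from orthogonal pieces of $L^2(\upsilon)$).

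There is, however, a real gap at the step you flag yourself as the main obstacle. The pointwise inequality
\[
\|F\cic{1}_{\Gamma^{1/\beta}(y)}\|_{S}^{\tau}\;\lesssim\;\beta^{-d}\,\mathrm{M}\big(z\mapsto\|F\cic{1}_{\Gamma(z)}\|_{S}^{\tau}\big)(y)
\]
does not follow from your covering. The difficulty is that the $\sim\beta^{-d}$ unit cones needed to cover $\Gamma^{1/\beta}(y)$ at dyadic height $2^k$ have vertices in $B(y,2^{k+1}/\beta)$, and this ball is different for each $k$. After applying the $\ell^\tau$-subadditivity you obtain a double sum $\sum_k\sum_i\|F\cic{1}_{\Gamma(z_{k,i})\cap A_k}\|_S^\tau$, but you cannot reassemble the height-slices $\Gamma(z)\cap A_k$ into full cones $\Gamma(z)$ without a \emph{cotype} inequality, which is not assumed. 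And no single Hardy--Littlewood radius $R$ can simultaneously capture the vertices $z_{k,i}$ for all $k$, so the claimed domination by $\mathrm{M}$ fails even in the Hilbert case $\tau=2$ (test against $F$ supported on two widely separated heights). The actual argument in \cite{HytNeePor08} handles the aperture dependence by different means---essentially by building the type/cotype exponent directly into the tent-space interpolation and atomic theory rather than via a pointwise covering bound. Your sketch would need to be reworked at this point to avoid the reassembly issue.
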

We send to  \cite{HytNeePor08} and references therein for the proof. Here, we remark that, if $\X$ is a Hilbert space and $q\geq 2$, one may take $\eps=\eps_0$: see \cite{HarTorViv91}.

\begin{remark} We can upgrade \eqref{LqinfSq} to the strong type. Fixing $1<q<\infty$, and $\eps>\eps_0$, denote by $\eps'=(\eps+\eps_0)/2$.   Interpolating, via Proposition \ref{propint}, \eqref{LpSqemb} for $p=p_2>q$ and \eqref{LpSqbelow} with $\eps'$ in place of $\eps$ and    $p=p_2<q$ chosen such that $\theta=\eps'-\eps$, we obtain
\begin{equation}
\label{LqSqemb}
\|F_{\phi,\alpha,\beta} (f)\|_{L^{q}(\mathsf s_{q})} \lesssim  \beta^{-d\eps} \|f\|_{L^q(\R^d;\X)},
\end{equation}
\end{remark}
\begin{remark} When $\X$ has type 2, and $q\geq 2$, the constant $\eps_0$ defined in \eqref{theeps0} is equal to zero. If $\X$ is a Hilbert space, as it will be clear from the proof, all the estimates of Proposition \ref{cet} hold for  $\eps=0$ as well. This is not the case for the strong $L^q(\mathsf s_q)$ estimate \eqref{LqSqemb}. This proposition thus improves on the corresponding scalar-valued version of \cite[Lemma 4.3]{DoThiele15}.
\end{remark}
We now pass to the proof of Proposition \ref{cet}, which is divided into several steps. \subsection{Preliminary reductions}The first is the simple observation that, by outer measure interpolation, it suffices to prove  \eqref{embSq}, \eqref{LqinfSq}, and \eqref{L1Sqemb}. Notice that \eqref{LqinfSq}, \eqref{LqSqemb}, and \eqref{LpSqbelow} are stronger than the results one might obtain directly from interpolating (\ref{embSq}) and (\ref{L1Sqemb}). The second reduction is that
 we will assume in the proof that the bump function $\phi$ is compactly supported on $B_1(0)$. The general case can then be recovered via the, by now standard, mean zero with rapid decay decomposition of \cite[Lemma 3.1]{MPTT06}. 

\subsection{Reduction to tilted sizes}\label{sstilt}  Fixing $\alpha\in B_1(0)$, $0<\beta\leq 1$, one can define a tilted tent $\mathsf{T}_{\alpha,\beta}(x,s)$ containing those points $(u,t)$ such that $(u-\alpha\beta^{-1}t,\beta^{-1}t)\in \mathsf T(x,s)$. Similarly, for any $y\in B_s(x)$, the associated tilted cone $\Gamma_{s,\alpha,\beta}(y)$ is defined to contain points $(u,t)$ such that $(u-\alpha\beta^{-1}t,\beta^{-1}t)\in\Gamma_s(y)$. Note that $\Gamma_{s,\alpha,\beta}$ is not a symmetric cone. For $0<q<\infty$, we define the \emph{lacunary tilted sizes}
\[
{\mathsf s_{q,\alpha,\beta}}(F) (\mathsf T_{\alpha,\beta}(x,s))= \left(\frac{1}{s^d} \int_{B_{s}(x)} \|F\cic{1}_{\Gamma_{s,\alpha,\beta}(y)}\|^q_{\mathcal S ({\upsilon},\X)} \d y\right)^{\frac1q} \; (0<q<\infty).
\]
Notice that when $\alpha=0,\beta=1$, ${\mathsf s_{q,\alpha,\beta}}=\mathsf s_q$, the standard size. Via a straightforward change of variable, for any Borel measurable function $F$ on $\R^{d+1}_+$,
\[
\mathsf{s}_{q}(F(u+\alpha t,\beta t))(T(x,s))=\beta^{d/2}{\mathsf s_{q,\alpha,\beta}}(F(u,t))(\mathsf T_{\alpha,\beta}(x,s)).
\]
We now construct the outer measure $\mu_{\alpha,\beta}$ and the  outer $L^p$ spaces $L^p(\R^{d+1}_+; \mathsf s_{q,\alpha,\beta})\equiv L^p(  \mathsf s_{q,\alpha,\beta}) $ using the tilted tents $\mathsf  T_{\alpha,\beta}$ as our generating collection, with premeasure ${\sigma}_{\alpha,\beta}(\mathsf T_{\alpha,\beta}(x,s)):=s^d$ and  size ${\mathsf s_{q,\alpha,\beta}}$. By transport of structure
\[
\|F_{\phi,\alpha,\beta}(f)\|_{L^p( \mathsf s_q)}=\beta^{d/2}\|F_{\phi,0,1}\|_{L^p(  \mathsf s_{q,\alpha,\beta})},
\]
and the same for the weak norms. Hence, the estimates  \eqref{embSq}, \eqref{LqinfSq}, and \eqref{L1Sqemb} of Theorem \ref{cet} can be obtained  respectively from
\begin{align}
\label{embSq1} &\|F_{\phi} (f)\|_{L^{\infty}( \mathsf s_{q,\alpha,\beta})} \lesssim  \beta^{-d(\frac12+\eps)}\|f\|_{\mathrm{BMO}(\R^d; \X)},  \\
&\label{LqSqemb1} \|F_{\phi}(f)\|_{L^{q,\infty}(\mathsf s_{q,\alpha,\beta})} \lesssim \beta^{-d(\frac12+\eps)} \|f\|_{L^q(\R^d;\X)}, \\
&\|F_{\phi} (f)\|_{L^{1,\infty}(\mathsf s_{q,\alpha,\beta})} \lesssim C_{q}  \beta^{-d(\frac32+\eps)} \|f\|_{L^1(\R^d; \X)} \label{L1Sqemb1}
\end{align}
where we have agreed to write $F_{\phi}(f)$ for the non-tilted function $F_{\phi,0,1}(f)$.


\subsection{Proof of the $L^\infty$ estimate (\ref{embSq1})}According to the definition of the outer $L^\infty$ norm, we need to show that for all $x,s$,
\[
\frac{1}{s^d}\int_{B_s(x)}\|f\ast \phi_t(u)\cic{1}_{\Gamma_{s,\alpha,\beta}(y)}\|^q_{S ({\upsilon},\X)}\,\d y\lesssim \beta^{-qd\left(\frac12+\eps\right)}\|f\|_{\mathrm{BMO}}^q.
\]
Observe that $\Gamma_{s,\alpha,\beta}(y)\subset \Gamma_{\beta s}^{2/\beta}(y)$. Letting $Q=B_{4s}(x)$ and $f_Q$ the average value of $f$ on $Q$, the LHS of the above is
\[
\begin{split}
 &\leq \frac{1}{s^d}\int_{B_s(x)}\|f\ast \phi_t(u)\cic{1}_{\Gamma_{\beta s}^{2/\beta}(y)}\|^q_{S ({\upsilon},\X)}\,\d y \\ &\leq \frac{1}{s^d}\int_{B_s(x)}\|[(f-f_Q)\cic{1}_Q]\ast \phi_t(u)\cic{1}_{\Gamma^{2/\beta}(y)}\|^q_{S ({\upsilon},\X)}\,\d y,
\end{split}
\]
where we have used the fact that the union of $\Gamma_{\beta s}^{2/\beta}(y)$ over $y\in B_s(x)$ is contained in the enlarged non-tilted tent $\mathsf T(x,4s)$ and $\phi$ has compact support.
Therefore, due to Proposition \ref{HyNe} and the John-Nirenberg inequality, the above is
\[
\lesssim\beta^{-qd\left(\frac12+\eps\right)}\frac{1}{s^d}\|(f-f_Q)\cic{1}_Q\|_{L^q(\X)}^q\lesssim \beta^{-qd\left(\frac12+\eps\right)}\|f\|_{\mathrm{BMO}}^q,
\]
which completes the proof.

\subsection{Proof of (\ref{LqSqemb1}):  weak-$L^q$ estimate}
%


Assume $\|f\|_{L^q(\R^d;\X)}=1$. Given  $\lambda>0$, it suffices to find a set $E\subset \R^{d+1}_+$ such that 
\begin{equation}\label{weakLp}
{\mu}_{\alpha,\beta}(E)\lesssim \lambda^{-q}\beta^{-qd\left(\frac12+\eps\right)}
\end{equation}
and
\begin{equation}\label{weakLp1}
{\mathsf s_{q,\alpha,\beta}}(F_{\phi}(f)\cic{1}_{E^c})(\mathsf T_{\alpha,\beta}(x,s))\lesssim \lambda,\qquad \forall \mathsf T_{\alpha,\beta}(x,s).
\end{equation}
Let $\mathbf{B}$ be the collections of balls $B_i=B_{s_i}(x_i)$ such that
\[
\frac{1}{{s_i}^d}\int_{B_{s_i}(x_i)}\|F_{\phi}(f)\cic{1}_{\Gamma_{s_i,\alpha,\beta}(y)}\|^q_{S ({\upsilon},\X)}\,\d y>\lambda^q.
\] 
Let $E=\bigcup_{i: B_i\in\mathbf{B}}\bigcup_{y\in B_{s_i}(x_i)}\Gamma_{s_i,\alpha,\beta}(y)$. By construction, property (\ref{weakLp1}) is satisfied. It thus suffices to verify (\ref{weakLp}).
By a standard Besicovitch-type covering argument, the centers of the balls $B_i\in\mathbf{B}$ can be covered by finitely many countable subcollections $\mathbf{B}_1,\ldots,\mathbf{B}_n$, such that 
\[
\bigcup_{i:B_i\in\mathbf{B}}B_i=\bigcup_{i: B_i\in\mathbf{B}_1}B_i \cup\ldots\cup\bigcup_{i: B_i\in\mathbf{B}_n} B_i
\]
 and each of the $\mathbf{B}_n$ consists of pairwise disjoint balls. We  study  $\mathbf{B}_1$ below, and  analogous estimate will hold for all the other sub-collections. Observe that for all $y\in B_s(x)$, $\Gamma_{s,\alpha,\beta}(y)\subset \mathsf T_{\alpha,\beta}(x,Cs)$ for some fixed constant $C$. Hence,
\[
\begin{split}
&\quad \lambda^q{\mu}_{\alpha,\beta}\Big(\bigcup_{i: B_i\in\mathbf{B}_1} \bigcup_{y\in B_{s_i}(x_i)}\Gamma_{s_i,\alpha,\beta}(y)\Big)
\leq \lambda^q\sum_{i:B_i\in\mathbf{B}_1}{\mu}_{\alpha,\beta}\Big(\bigcup_{y\in B_{s_i}(x_i)}\Gamma_{s_i,\alpha,\beta}(y)\Big)\\
&\leq\lambda^q\sum_{i:B_i\in\mathbf{B}_1}{\mu}_{\alpha,\beta}(\mathsf T_{\alpha,\beta}(x_i,Cs_i))\lesssim \sum_{i: B_i\in\mathbf{B}_1} 
\int_{B_{s_i}(x_i)} \|F_{\phi}(f)\cic{1}_{\Gamma^{2/\beta}(y)}\|^q_{S ({\upsilon},\X)}\,\d y,
\end{split}
\]
according to the construction of $\{B_i\}$ and the fact that $\Gamma_{s_i,\alpha,\beta}(y)\subset \Gamma^{2/\beta}(y)$. Since all the balls $B_i\in\mathbf{B}_1$ are disjoint, the above is
\[
\leq \int_{\R^d} \|F_{\phi}(f)\cic{1}_{\Gamma^{2/\beta}(y)}\|^q_{S ({\upsilon},\X)}\,\d y\lesssim \beta^{-qd\left(\frac12+\eps\right)},
\]
where the last inequality follows from Proposition \ref{HyNe}.

\subsection{{{}Proof of (\ref{L1Sqemb1}): weak-$L^1$ estimate}}
We can assume that $\|f\|_{L^1(\X)}=1$. Then in order to show the weak-$L^1$ bound, for any $\lambda>0$, it suffices to find a set $E\subset \R^{d+1}_+$ with the property that ${\mu}_{\alpha,\beta}(E)\lesssim  \lambda^{-1}\beta^{-d}$ and such that
\begin{equation}\label{weakL1}
{\mathsf s_{q,\alpha,\beta}}(F_{\phi}(f)\cic{1}_{E^c})(\mathsf T_{\alpha,\beta}(x,s))\lesssim \lambda \beta^{-d\left(\frac12+\eps\right)} \qquad \forall\, \mathsf T_{\alpha,\beta}(x,s).
\end{equation}
We define such a set by  \begin{equation}\label{weakL1future}
E=\bigcup_{Q \in \cic{Q}_{f,\lambda}}\mathsf T_{\alpha,\beta}(c_Q, 3\beta^{-1}\ell(Q))\end{equation} where    $\cic{Q}_{f,\lambda}$  is defined  in \eqref{maxfctbd}. Using that the elements of $\cic{Q}_{f,\lambda}$   are pairwise disjoint thus yields
\begin{align*}
&\quad {\mu}_{\alpha,\beta}(E)\leq  \sum_{Q \in \cic{Q}_{f,\lambda}} \sigma_{\alpha,\beta}(\mathsf T_{\alpha,\beta}(c_Q, 3\beta^{-1}\ell(Q)) \lesssim \beta^{-d}\sum_{Q \in \cic{Q}_{f,\lambda}} |Q| \\ &\lesssim \beta^{-d} |\{x\in\R^d: \mathrm{M}(\|f\|_{\mathcal X})(x)>\lambda\}| \lesssim \beta^{-d} \lambda^{-1}.
\end{align*} 
We move to the proof of (\ref{weakL1}). The Calder\'on-Zygmund decomposition yields a  good function $g$ and bad functions $b_Q, {Q \in \cic{Q}_{f,\lambda}}$ such that \[
f=g+b, \qquad b=\sum_{Q \in \cic{Q}_{f,\lambda}}\ b_Q\] with the following properties:
\begin{align}
&\|g\|_{L^\infty(\X)}\lesssim \lambda;\\ \label{CZD}
& \text{supp}\,b_Q\subset Q, \qquad \int b_j=0,\qquad \|b_Q\|_{L^1(\X)}\lesssim \lambda |Q|.
\end{align}
For an arbitrary tilted tent $\mathsf T_{\alpha,\beta}(x,s)$, one has by estimate (\ref{embSq1}) that
\begin{equation} \label{Aug150}
{\mathsf s_{q,\alpha,\beta}}(F_{\phi}(g))(\mathsf T_{\alpha,\beta}(x,s))\lesssim \beta^{-d(\frac{1}{2}+\eps)}\|g\|_{L^\infty(\X)}\lesssim \lambda \beta^{-d(\frac{1}{2}+\eps)}.
\end{equation}
We now move to controlling the contribution of the bad part $b$. We will prove  that for
\[
P(u,t):=  F_{\phi}(b)\cic{1}_{E^c}(u,t)= \sum_{{Q \in \cic{Q}_{f,\lambda}} } P_Q(u,t), \qquad P_Q(u,t):= F_{\phi}(b_Q)\cic{1}_{E^c}(u,t)
\]
there holds 
\begin{equation} \label{Aug151}
\sup_{(x,s)\in Z}   \mathsf{s}_{1,\alpha,\beta}  (P)(\mathsf T_{\alpha,\beta}(x,s) ) \lesssim  \lambda\beta^{-\frac d2}.
\end{equation}
 Up to a constant depending on $q$ only, the same estimate as \eqref{Aug151} holds for $q$ in place of $1$ by virtue of the John-Nirenberg Proposition \ref{JNp}. This, coupled with \eqref{Aug150}, yields the required estimate \eqref{weakL1} and completes the proof of the weak-$L^1$ case. 
 
The proof of \eqref{Aug151} begins now. The crucial point   is the estimate
\begin{equation}  \label{Aug152} \|P_Q\cic{1}_{\Gamma_{\infty,\alpha,\beta}(y)}\|_{S ({\upsilon},\X)} \lesssim \lambda \beta^{-\frac d2} \left(1+ \frac{|y-c_Q|}{{\ell(Q)}}\right)^{-d-1} \end{equation}
holding uniformly in ${Q \in \cic{Q}_{f,\lambda}}$ and $y\in \mathbb R^d$,
 from which \eqref{Aug151} follows rather immediately. Indeed, fix a tent $\mathsf T=\mathsf T_{\alpha,\beta}(x,s)$. It is easy to see that $ \|P_Q\cic{1}_{\Gamma_{s,\alpha,\beta}(y)}\|_{S ({\upsilon},\X)} $ is nonzero for some $y\in B_{s}(x)$ only if $Q\subset B_{3s}(x)$. Hence
\[
\begin{split} & \quad 
\int_{B_s(x)} \mathcal \|P\cic{1}_{\Gamma_{s,\alpha,\beta}(y)}\|_{S ({\upsilon},\X)}  \,\d z \leq \sum_{   Q\subset B_{3s}(x)}  \int_{\R^d} \|P_Q\cic{1}_{\Gamma_{s,\alpha,\beta}(y)}\|_{S ({\upsilon},\X)}\,\d y \\ & \lesssim \lambda \beta^{-\frac d2}    \sum_{   Q\subset B_{3s}(x)} \int_{\R^d}  \left(1+{\textstyle \frac{|y-c_Q|}{{\ell(Q)}}}\right)^{-d-1}\ \d y  \lesssim \lambda   \sum_{   Q\subset B_{3s}(x)} |Q| \lesssim \lambda s^d \end{split}
\] 
where the last step follows from disjointness of ${Q \in \cic{Q}_{f,\lambda}}$. The above display is exactly the sought \eqref{Aug151}.

We still need to prove   estimate \eqref{Aug152} for each fixed $y\in \R^d$.  The point is that the support of  $P_Q(u,t)$ is contained in the complement of $\mathsf T_{\alpha,\beta}(c_Q,3\beta^{-1}{\ell(Q)})$. Hence,
if $|y-c_Q|\leq 3\ell(Q)$, then $\Gamma_{\infty,\alpha,\beta}(y)$ intersects such support only if  $  t\geq    \ell(Q)$. In fact, since $b_Q$ is supported on $Q$, 
\[
P_Q(u,t) \cic{1}_{\Gamma_{\infty,\alpha,\beta}(y)}\neq 0 \implies t\geq \max\{\ell(Q),|y-c_Q|\}.
\]  
Now since $b_j $ has mean zero and $\|b_Q\|_{L^1(\mathcal X)} \lesssim \lambda\ell(Q)^d$, we have 
\[
\begin{split}  
\left\|P_Q(u,t)\right\|_{\X}= \left\|  \int_{j} b_Q(z) [\phi_{  t}(u -z)- \phi_{ t}(u-c_Q)] \, \d y\right\|_{\X}   \lesssim \lambda \ell(Q)^{d+1}  t^{-(d+1)}
\end{split}
\]
where in the last step we have used that $\|\nabla \phi_{ t} \|_\infty\lesssim  t^{-(d+1)} $.  
Thus, squaring and  integrating first over $\{u\in \R^d: (u,t) \in \Gamma_{\infty,\alpha,\beta}(y)\}$, which has measure $\lesssim \beta^{-d}t^d$, 
\[\begin{split} &\quad \left\|P_Q\cic{1}_{\Gamma_{\infty,\alpha,\beta}(y)}\right\|_{S ({\upsilon},\X)}\leq \left(\int_{\Gamma_{\infty,\alpha,\beta}(y)}\ 
\|P_Q(u,t) \|_{\X}^2 \frac{\d u \d t}{t^{d+1}} \right)^{\frac12} \\ &
 \lesssim  \lambda \beta^{-\frac d2}  {\ell(Q)}^{(d+1)}\left(\int_{  t\geq   \max\{{\ell(Q)}, |y-c_Q|\}}    t^{-2(d+1)-1} \,   \d   t\right)^{\frac12}  \lesssim   \lambda \beta^{-\frac d2}   \left(1+ \frac{|y-c_Q|}{{\ell(Q)}}\right)^{-(d+1)}, 
\end{split}
\]
which is the claimed estimate \eqref{Aug152}. This completes the proof of \eqref{Aug151} and in turn of the weak-$L^1$ estimate (\ref{L1Sqemb1}).

\section{Overlapping Carleson embeddings and the RMF}\label{SecRMF}
  In this section, we detail the Carleson embedding theorems involving the \emph{overlapping} outer $L^p$ spaces of Section \ref{SecOM}. Unlike the lacunary case, here we work with a triple of complex Banach spaces $\X_1,\X_2,\X_3$ coupled with the identification \eqref{trcontfm}. We write $L^p(\mathsf{r}_q)$ for   $L^p(\R^{d+1}_+,\mathsf{r}_q^{\X_3})$.
\begin{proposition}[Overlapping Carleson Embedding Theorems] \label{ovcarlemb}
Assume that ${\X_3}$ has the nontangential   $\mathrm{RMF}$ property with respect to the above mentioned identification. 
Let $\phi:\R^d \to \R$ be a  member of the class $\cic{\Phi}$ of \eqref{decay}. For $f \in \mathcal{S}(\R^d; \X)$, $\alpha \in B_1(0),  0<\beta\leq 1$, define 
$$
F_{\phi,\alpha,\beta}(f)(u,t)= f* \phi_{\beta t} (u+\alpha t).$$ We then have
\begin{align}
\label{embRq} &\|F_{\phi,\alpha,\beta} (f)\|_{L^\infty(\mathsf{r}_q)} \lesssim\|f\|_{\mathrm{BMO}(\R^d; {\X_3})}, \qquad 0<q<\infty \\
& \|F_{\phi,\alpha,\beta} (f)\|_{L^{p}(\mathsf{r}_\infty)} \lesssim   \|f\|_{L^p(\R^d; {\X_3})},\qquad 1<p<\infty,\label{LpRqemb}
\\ &\|F_{\phi,\alpha,\beta} (f)\|_{L^{1,\infty}(\mathsf{r}_\infty)} \lesssim   \|f\|_{L^1(\R^d; {\X_3})}. \label{L1Rqemb}
\end{align}
with implicit constant depending on $q$ or $p$ and  only, as well as on the $\mathrm{RMF}$  character of $\X_3$.
\end{proposition}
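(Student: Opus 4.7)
The plan is to reduce all three estimates to the boundedness properties of the nontangential Rademacher maximal function $\M_{\cic{\Phi}}$ established in Lemma \ref{lemmaRMF1}. The key observation is a pointwise-in-$y$ comparison: for $(u,t)\in \Gamma_s(y)$, the change of variables $v=u+\alpha t$, $\tau=\beta t$ places $(v,\tau)$ in the truncated cone $\Gamma^{\alpha'}_{\beta s}(y)$ with $\alpha'=(1+|\alpha|)/\beta$. Hence $\{F_{\phi,\alpha,\beta}(f)(u,t):(u,t)\in \Gamma_s(y)\}$ is contained in $\{f\ast\phi_\tau(v):(v,\tau)\in \Gamma^{\alpha'}_{\beta s}(y)\}$, and taking $\mathcal R$-bounds yields
\[
\|F_{\phi,\alpha,\beta}(f)\cic{1}_{\Gamma_s(y)}\|_{R(\upsilon,\X_3)}\leq \M_{\phi,\alpha'}f(y)\lesssim \M_{\cic{\Phi}}f(y),
\]
uniformly in $\alpha,\beta$, via the aperture-independence remark after Lemma \ref{lemmastep}. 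Consequently $\mathsf{r}_\infty(F_{\phi,\alpha,\beta}(f))(\mathsf{T}(x,s))\leq \sup_{y\in B_s(x)}\M_{\cic{\Phi}}f(y)$, and $\mathsf{r}_q$ is dominated by the corresponding $L^q$-average over $B_s(x)$.

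For \eqref{LpRqemb} and \eqref{L1Rqemb}, I will establish the weak-type outer bound $\mu(\mathsf{r}_\infty(F_{\phi,\alpha,\beta}(f))>\lambda)\lesssim |E_\lambda|$, where $E_\lambda:=\{y\in\R^d:\M_{\cic{\Phi}}f(y)>\lambda\}$. The natural exceptional region is the tent-set $W_\lambda:=\bigcup_{y\in E_\lambda}\Gamma(y)$. For any tent $\mathsf{T}(x,s)$ and any $y\in B_s(x)$, either $y\in E_\lambda$, in which case $\Gamma_s(y)\subset W_\lambda$ and the restriction $F\cic{1}_{W_\lambda^c}\cic{1}_{\Gamma_s(y)}$ vanishes, or $y\notin E_\lambda$, in which case the pointwise bound gives a cap of $\lambda$. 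Thus $\mathrm{outsup}_{W_\lambda^c}\mathsf{r}_\infty(F)\leq \lambda$, and the weak estimate reduces to the Carleson/tent-set inequality $\mu(W_\lambda)\lesssim |E_\lambda|$, to be obtained from a Whitney decomposition $E_\lambda\subset \bigcup_i B_i$ with $\sum_i |B_i|\lesssim |E_\lambda|$ and the containment $W_\lambda\subset \bigcup_i \mathsf{T}(c_{B_i},C\ell(B_i))$ for an absolute $C$. Applying Lemma \ref{lemmaRMF1} to control $|E_\lambda|$ by $\lambda^{-p}\|f\|_{L^p(\X_3)}^p$ for $1<p<\infty$ and by $\lambda^{-1}\|f\|_{L^1(\X_3)}$ at the endpoint yields the weak outer $L^p$ bounds; the strong $L^p$ bound \eqref{LpRqemb} then follows from Marcinkiewicz interpolation (Proposition \ref{propint}) between two such weak endpoints.

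For the BMO estimate \eqref{embRq} (read under the tacit assumption $\widehat{\phi}(0)=0$, or else modulo constants, since otherwise $F_{\phi,\alpha,\beta}$ applied to a nonzero constant is nonzero while the BMO seminorm vanishes), the plan is to combine the $\mathrm{BMO}\to\mathrm{BMO}$ mapping property of $\M_{\cic{\Phi}}$ from Lemma \ref{lemmaRMF1} with an $L^q$ John--Nirenberg inequality: for each tent $\mathsf{T}(x,s)$, the $L^q$ average of $\M_{\cic{\Phi}}f$ over $B_s(x)$ is controlled by $\|\M_{\cic{\Phi}}f\|_{\mathrm{BMO}}\lesssim \|f\|_{\mathrm{BMO}(\X_3)}$ after subtraction of the appropriate constant, which is absorbed by splitting $f=(f-f_{B_{Cs}(x)})\cic{1}_{B_{Cs}(x)}+\text{tail}$ and handling the tail via the decay of $\phi$, along the lines of the BMO step in Proposition \ref{cet}. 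The main obstacle throughout will be the tent-set inequality $\mu(W_\lambda)\lesssim |E_\lambda|$ together with careful bookkeeping of the quasi-subadditivity constants of the $R(\upsilon,\X_3)$ (quasi-)norm from Lemma \ref{Rbounds}, which must be kept benign both when restricting to $W_\lambda^c$ and when summing contributions across Whitney balls; once these ingredients are in place, the outer-$L^p$ machinery of Section \ref{SecOM} delivers the three estimates uniformly.
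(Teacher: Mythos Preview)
Your pointwise comparison $\|F_{\phi,\alpha,\beta}(f)\cic{1}_{\Gamma_s(y)}\|_{R(\upsilon,\X_3)}\lesssim \M_{\cic{\Phi}}f(y)$ and the ensuing case split on whether the cone vertex $y$ lies in the superlevel set $E_\lambda=\{\M_{\cic{\Phi}}f>\lambda\}$ are both correct and clean. The gap is in your choice of exceptional region and the claimed tent covering. Your set
\[
W_\lambda=\bigcup_{y\in E_\lambda}\Gamma(y)=\{(u,t):\dist(u,E_\lambda)<t\}
\]
opens \emph{upward}: whenever $E_\lambda\neq\emptyset$, $W_\lambda$ contains all of $\R^d\times(t_0,\infty)$ for every $t_0$ large enough, so no countable family of tents with finite total premeasure can cover it, and in fact $\mu(W_\lambda)=\infty$. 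The asserted containment $W_\lambda\subset\bigcup_i\mathsf{T}(c_{B_i},C\ell(B_i))$ is therefore false: each tent has height $C\ell(B_i)$, while $W_\lambda$ is unbounded in $t$. Hence the step ``$\mu(W_\lambda)\lesssim|E_\lambda|$'' cannot be salvaged with this $W_\lambda$, and the weak-type argument collapses.

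The paper's route avoids this by taking the exceptional set to be the \emph{downward} region $\widetilde E_\lambda=\bigcup_{I}\mathsf{T}(9I)$, the union of tents over the maximal dyadic cubes $I\subset E_\lambda$, for which $\mu(\widetilde E_\lambda)\lesssim|E_\lambda|$ is immediate. The price is that your case split no longer closes: when $y\in E_\lambda$, one does \emph{not} have $\Gamma_s(y)\subset\widetilde E_\lambda$ in general. The paper supplies the missing geometric ingredient: if $\Gamma_s(y)\not\subset\widetilde E_\lambda$, then nearby one can find $2^d$ points $y_j\notin E_\lambda$ whose full cones $\Gamma(y_j)$ cover the portion of $\Gamma_s(y)$ lying outside $\widetilde E_\lambda$; quasi-subadditivity of the $R$-bound then gives $\|F\cic{1}_{\widetilde E_\lambda^c}\cic{1}_{\Gamma_s(y)}\|_{R}\lesssim\max_j\M_{\cic{\Phi}}f(y_j)\leq\lambda$. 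This geometric covering step (or an equivalent substitute) is what your plan is missing; once you have it, the rest of your outline for \eqref{LpRqemb}--\eqref{L1Rqemb}, and your treatment of \eqref{embRq} via localization and the $L^q$ bound for $\M_{\cic{\Phi}}$, goes through essentially as in the paper.
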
 
Again, a byproduct of the proof of inequality \eqref{L1Rqemb} is \begin{proposition}[Overlapping Carleson embedding theorem, local version]\label{cetlocover} In the same setting of the previous proposition, let  $f\in 
\mathcal S(  \R^d; \X_3) $  and  $  \cic{Q}_{f,\lambda} $ be as in \eqref{maxfctbd}.
  Then for all $q \in (0,\infty)$
$$\|F_{\phi,\alpha,\beta} (f)\|_{L^{\infty}(\R^{d+1}_+\backslash  \mathsf T_\beta(\cic{Q}_{f,\lambda}),\mathsf{r}_q^ {\X_3})} \leq C_{q} \lambda.$$ 
\end{proposition}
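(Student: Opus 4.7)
The plan is to follow the Calder\'on--Zygmund template already used in the lacunary local embedding Proposition \ref{cetloc} and the weak-$L^1$ estimate (\ref{L1Sqemb1}). It suffices to establish, uniformly in the tent $\mathsf T(x,s)$, the bound
\[
\mathsf r_q\big(F_{\phi,\alpha,\beta}(f)\cic 1_{(\mathsf T_\beta(\cic Q_{f,\lambda}))^c}\big)(\mathsf T(x,s)) \lesssim_q \lambda,
\]
which is exactly the outer $L^\infty$-norm appearing in the claim. I would then decompose $f = \hat f + b$ \`a la Calder\'on--Zygmund at level $\lambda$: let $\hat f$ agree with $f$ on $E_\lambda^c=\{\mathrm M(\|f\|_{\X_3})\leq\lambda\}$ and equal the average $|Q|^{-1}\int_Q f$ on each $Q\in\cic Q_{f,\lambda}$, and set $b=\sum_{Q}b_Q$ with $b_Q=(f - |Q|^{-1}\int_Q f)\cic 1_Q$. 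Standard arguments, exploiting the openness of $E_\lambda$ and the maximality of the cubes $Q$, give $\|\hat f\|_{L^\infty(\X_3)}\lesssim\lambda$, while each $b_Q$ is $\X_3$-mean zero, supported on $Q$, with $\|b_Q\|_{L^1(\X_3)}\lesssim \lambda|Q|$.

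The good part is immediately controlled by the overlapping Carleson embedding (\ref{embRq}): since $\|\hat f\|_{\mathrm{BMO}(\X_3)} \lesssim \|\hat f\|_{L^\infty(\X_3)} \lesssim \lambda$, we obtain $\|F_{\phi,\alpha,\beta}(\hat f)\|_{L^\infty(\mathsf r_q)}\lesssim \lambda$, and the quasi-monotonicity \eqref{Lpqmp} preserves this bound upon restriction to $(\mathsf T_\beta)^c$. For the bad part I would exploit the mean zero of each $b_Q$ against the decay of $\phi\in\cic\Phi$: when $(u,t)\notin\mathsf T(c_Q, 3\beta^{-1}\ell(Q))$, either $\beta t\gtrsim \ell(Q)$ or $|u+\alpha t - c_Q|\gtrsim \beta^{-1}\ell(Q)$, and in either case subtracting the constant $\phi_{\beta t}(u+\alpha t - c_Q)$ (permissible by mean-zero cancellation) and Taylor-expanding $\phi_{\beta t}$ at $c_Q$ yields the pointwise estimate
\[
\big\|b_Q * \phi_{\beta t}(u+\alpha t)\big\|_{\X_3} \lesssim \lambda\frac{\ell(Q)^{d+1}}{(\beta t + |u+\alpha t - c_Q|)^{d+1}},
\]
which sums (by disjointness of $\{Q\}$ and a standard comparison with $\|\phi\|_{L^1}$) to $\|F_{\phi,\alpha,\beta}(b)(u,t)\|_{\X_3}\lesssim\lambda$ throughout $(\mathsf T_\beta)^c$.

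The principal obstacle is upgrading this pointwise $\X_3$-norm estimate to the randomized $R(\upsilon,\X_3)$-based $\mathsf r_q$-bound, since in general the $R$-bound strictly dominates the supremum of $\X_3$-norms. In the Hilbert-space case Remark \ref{remtype} closes the matter at once. In the general UMD setting, my plan is to organize the cubes $Q\in\cic Q_{f,\lambda}$ into dyadic shells of the ratio $\ell(Q)/(\beta t+|u+\alpha t - c_Q|)$, and convert the pointwise geometric-series sum above into a corresponding randomized bound by means of the convolution and quasi-monotonicity estimates of Lemma \ref{Rbounds}, together with Kahane's contraction principle applied shell by shell. The non-oscillatory surrogate $\sum_Q (|Q|^{-1}\int_Q f)\cic 1_Q$, already absorbed into $\hat f$, ensures that the BMO embedding (\ref{embRq}) handles the bulk contribution, while the oscillatory residual is summable in the $R$-bound via the shell decomposition. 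Integrating the resulting pointwise-in-$y$ estimate over $B_s(x)$ and taking the $L^q$-mean then yields the tent-wise $\mathsf r_q$-bound and concludes the proof.
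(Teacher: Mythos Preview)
Your Calder\'on--Zygmund approach diverges from the paper's and hits a real obstacle at exactly the point you identify. The paper does \emph{not} decompose $f$: its proof of \eqref{L1Rqemb}, of which Proposition \ref{cetlocover} is declared a byproduct, works directly with the nontangential Rademacher maximal function $\M_{\cic\Phi}$. The key is the cone-covering step \eqref{bdt}: if $\Gamma_s(x)$ is not contained in the exceptional tent-union, one covers it by $2^d$ full cones $\Gamma(y_j)$ based at points where $\M_{\cic\Phi}(f)(y_j)\le\lambda$, and quasi-monotonicity (Lemma \ref{Rbounds}) then yields $\|F_\phi(f)\cic 1_{\Gamma_s(x)\setminus E_\lambda}\|_{R(\upsilon,\X_3)}\lesssim\sup_j\M_{\cic\Phi}(f)(y_j)\lesssim\lambda$ immediately. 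This controls $\mathsf r_\infty$ (and hence every $\mathsf r_q$) in one line, because the $R$-bound over a cone is \emph{by definition} dominated by $\M_{\cic\Phi}$ at the apex; no good/bad splitting is needed.

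Your route handles the good part $\hat f$ correctly via \eqref{embRq}, but the bad part is genuinely problematic. The pointwise estimate $\|F_{\phi,\alpha,\beta}(b)(u,t)\|_{\X_3}\lesssim\lambda$ on $(\mathsf T_\beta)^c$ does follow from your decay-and-sum argument, yet $\mathsf r_q$ is built on the $R$-bound, which in general strictly dominates the supremum of $\X_3$-norms. The tools you invoke do not close this gap: the convolution estimate in Lemma \ref{Rbounds} would bound $\mathcal R(\{b*\phi_{\beta t}(u+\alpha t):(u,t)\})$ by $\mathcal R(\{b(z):z\in\R^d\})$, a quantity not controlled by $\lambda$; and even if one instead estimates each cube's contribution separately and sums, the resulting series $\sum_Q\lambda\,\ell(Q)^{d+1}/(\ell(Q)+\beta|y-c_Q|)^{d+1}$ need not be bounded uniformly in $y$ (maximal dyadic cubes of arbitrarily many scales may satisfy $|y-c_Q|\sim\ell(Q)$), so the $\mathsf r_\infty$ bound fails along this route. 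Your ``shell decomposition via Kahane'' sketch does not address this counting issue. The paper's direct use of $\M_{\cic\Phi}$ sidesteps the obstacle entirely.
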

We pass to the proof of Proposition \ref{ovcarlemb}. Before the proof of each statement, we recall that the  nontangential   $\mathrm{RMF}$ property for $\X_3$ implies that the grand maximal function $\M_{\cic{\Phi}, {2/\beta}}$ is (suitably) bounded on each space in the right hand sides of \eqref{embRq}-\eqref{L1Rqemb}, with constant independent of the aperture $\beta$. Below, we restrict ourselves to the proof of the non-tilted case $\alpha=0,\beta =1$ and write  $F_{\phi}(f)$ instead of $F_{\phi,0,1,}$. The general case can be obtained by a tilting argument similar to Subsection \ref{sstilt}, and by further relying on the above $\beta$-independence.

\begin{proof}[Proof of \eqref{embRq}] By standard partition of unity arguments it suffices to prove the case of $\phi$ being supported in, say,  $B_\rho(0)$.
Let $\mathsf T=\mathsf T(x_0,s)$ be any fixed tent. Let $Q$ be  the cube centered at $x_0$ and with sidelength $6\rho s$. Let $g=f\cic{1}_Q$. We can assume $g$ has mean zero.    We have by definition of the size and by the support of $\phi$, and later by the definition of $\M_{\cic{\Phi}}$, the $L^q(\X_3)$-boundedness of $\M_{\cic{\Phi}}$ and John-Nirenberg's inequality, \begin{align*}
\left(\mathsf r_q(F_\phi f) (\mathsf T)\right)^q &= \frac{1}{s^d} \int_{B_s(x_0)}\big( \mathcal R\left(\big\{  f* \phi_t (y) : (y,t) \in \Gamma_s(x)  \big\} \right) \big)^q \,\d x \\ & \leq \frac{1}{s^d} \int_{B_s(x_0)}\big( \mathcal R\left(\big\{ (f\cic{1}_{B_{3\rho s}(x_0)})* \phi_t (y) : (y,t) \in \Gamma_s(x)  \big\} \right) \big)^q \,\d x \\ & \leq \frac{1}{s^d} \int_{B_s(x_0)}\big(\M_{\cic{\Phi}}(g) (x)\big)^q\,\d x \lesssim  s^{-d}\|g\|^q_{q} \lesssim  \|f  \|^q_{\mathrm{BMO}} 
\end{align*}
which is what was to be proved. 
\end{proof}
\begin{proof}[Proof of \eqref{LpRqemb}-\eqref{L1Rqemb}] We prove that
\begin{equation}
\label{wtb}
 \|F_\phi (f)\|_{L^{p,\infty}(\mathsf r_\infty) } \lesssim    \|f\|_{L^p(\R^d; {\X_3})}
\end{equation}
for all $1\leq p<\infty$, which includes \eqref{L1Rqemb}. Then \eqref{LpRqemb} follows by outer measure Marcin\-kie\-wicz interpolation. By   vertical scaling, we  can work with $\|f\|_p=1$. For each $\lambda>0$, we define  $$
E_\lambda= \bigcup_{I \in \cic{I}_\lambda}\mathsf  T(9I),  
$$
where $\cic{I}_\lambda$ is the collection of maximal dyadic cubes contained in  $$\{x \in \R^d:\M_{\cic{\Phi}}(f) (x) > \lambda\}.$$ Then $
\|E_\lambda\| \lesssim \lambda^{-p},$  where $\|E_\lambda\|$ denotes outer measure. To prove  \eqref{wtb} we are left to show that 
\begin{equation}
\label{bdt}
\sup_{(x_0,s) \in \R^{d+1}_+} \mathsf r_\infty\big(F_\phi f\cic{1}_{\R^{d+1}_+\backslash E_\lambda}\big) (\mathsf T(x_0,s)) \leq \lambda.
\end{equation}
The geometry of tents implies that, if $\Gamma_{s}(x) \not \subset  E_\lambda  $, one may find $2^d$ points $y_j \not \in \{\M_{\cic{\Phi}}(f) > \lambda\} $ such that  the truncated cone $\Gamma_{s}(x)$ is  covered by the union of   the cones $ \Gamma(y)$. Therefore, relying on the monotonicity property \eqref{therest}, 
$$
\big\|F_\phi f\cic{1}_{\Gamma_{s}(x) \cap (E_\lambda)^c}\big\|_{R({\upsilon},{\X_3})}\leq2^{d+1} \sup_{j=1,\ldots, 2^d}  \M_{\cic{\Phi}}(f) (y_j)    \lesssim  \lambda  
,$$
and taking supremum over $x,s$ yields \eqref{bdt}. The proof of \eqref{wtb} is complete.
  \end{proof}

\section{Proof of Proposition \ref{thmmodel}}\label{Secpfthmmodel} Throughout the  proofs that follow, the implied constants depend without explicit mention on $ (p_1,p_2)$,  on $K_\sigma$ and (polynomially) $\{\beta_j:j=1,2,3\}$, as well as  on the $\mathrm{UMD}$ and $\mathrm{RMF}$ character of the spaces.\begin{proof}[Proof of the strong-type estimate]    We can assume that $$\|f_1\|_{L^{p_1}( \R^d;\X_1)} =\|f_2\|_{L^{p_2}( \R^d; \X_2)}=\|f_3\|_{L^{p_3}( \R^d;\X_3)}=1$$ by linearity. For the sake of definiteness, we work in the case of $\pi=3$ and  $\sigma$ being the identity permutation: the other cases are identical. According to H\"older's inequality for outer measures Lemma \ref{outerholder}, we can take advantage of the $\mathcal R$-bound \eqref{ppc} on $\mathsf{w}$ to obtain
\[\begin{split} &\quad 
|{\mathsf{PP}_{\mathsf w}^3}(f_1,f_2,f_3)|\\ &\lesssim \left(\prod_{j=1,2}\|F_{\phi^j,\alpha_j,\beta_j}(f_j)\|_{L^{p_j}(\R^{d+1}_+,\mathsf{s}_2^{\X_j})}\right)\|F_{\phi^3,\alpha_3,\beta_3}(f_3)\|_{L^{p_3}(\R^{d+1}_+,\mathsf{r}_\infty^{\X_3})}\lesssim 1,\end{split}
\]
where the second inequality follows from Carleson embeddings  (\ref{LpSqemb}), (\ref{LpSqbelow}) of Proposition \ref{cet}, and (\ref{LpRqemb}) of Proposition \ref{ovcarlemb}.
\end{proof}
\begin{proof}[Proof of the weak-type estimate] {{} For simplicity of notation, we work in the case $\alpha_j=0,\beta_j=1$ for $j=1,2,3$ (and thus omit the subscripts $\alpha_j,\beta_j$ in the embedding maps $F_\phi$). The small modifications needed for the general case are mentioned at the end of the proof.}
By multilinear interpolation and symmetry, it suffices to prove the extremal case $p_1=p_2=1$. Observe that the assumption on $\mathsf{w}$ \eqref{ppc} is invariant with respect to horizontal scaling. Using  this  scaling and linearity, we can reduce to the case $$\|f_1\|_{L^{1}(\R^d;\X_1)} =\|f_2\|_{L^{1}(\R^d;\X_2)}=|F_3|=1.$$ 
We begin by setting \begin{equation}
\label{excsetdef}
 E=
\big\{x \in \R^d:\max_{j=1,2} \mathrm{M}( \|f_{j}\|_{\X_j})(x) >C \big\}.
\end{equation}
Let   $\cic Q$ be the collection of maximal dyadic cubes $Q$ such that $3Q$ is contained in $
E
$. 
Then we may choose $C>0$ large enough in \eqref{excsetdef} so that
 \begin{equation}
\label{excsetdef2} F_3':= F_3 \backslash \cup_{Q \in \cic Q} 9Q 
\end{equation}
 is a major subset of $F_3$.  
We now recall, for $j=1,2$ and $k\geq 0$, the definitions of collections $ \cic{Q}_{f_j, C2^{k}}$ from \eqref{maxfctbd}.
The key observation is that \begin{equation} \label{keyobs}\dist(F_3',3Q) \geq 2^{k}\ell(Q) \qquad \forall Q \in \cic{Q}_{f_j, C2^{k}}, \quad j=1,2, \quad  k\geq 0.
\end{equation}
Accordingly, we decompose
$$
F_{\phi^3 }(f_3)= \sum_{k=0}^\infty 2^{-40k} F_{\phi_k^3}(f_3)$$
where $\phi_k^3$ is supported on a cube of sidelength $2^{k}$ about the origin and has mean zero if $\phi^3$ does.  This support consideration and \eqref{keyobs} imply that    for all      subindicator $\X_3$-valued functions $f_3$ supported on $F_3'$,
\begin{equation} \label{suppF}
 \widehat{E_k}\cap \mathrm{supp}\, 
 F_{\phi_k^3 }(f_3) =\emptyset, \qquad \widehat{E_k}:= \mathsf{T}_1(\cic{Q}_{f_1, C2^{k}}) \cup  \mathsf{T}_1(\cic{Q}_{f_2, C2^{k}}),
\end{equation}
{{}referring to the notation \eqref{exccoll}}.
At this point, for the sake of definiteness, we work in the case of $\pi=3$ and of  $\sigma$ being the identity permutation: the other cases follow the same scheme,  replacing Propositions \ref{cet} and \ref{cetloc} by Propositions \ref{ovcarlemb} and \ref{cetlocover}, or viceversa, when appropriate.
Using  the Carleson embeddings of Proposition \ref{cetloc}, and  Proposition \ref{cet} estimate \eqref{L1Sqemb}, respectively,
$$
\|F_{\phi^j} (f_j)\|_{L^{\infty}(\R^{d+1}_+\backslash \mathsf{T}(\cic{Q}_{f_j, C2^{k}}), \mathsf{s}_2^{\X_j})} \lesssim 2^{k}, \qquad  \|F_{\phi^j }(f_j)\|_{L^{1,\infty}(\R^{d+1}_+ , \mathsf{s}_2^{\X_j})} \lesssim 1
$$ for $j=1,2$,
and logarithmic convexity yields
\begin{equation}
\label{excset2}
\|F_{\phi^j }(f_j)\|_{L^{2}(\R^{d+1}_+\backslash  \widehat{E_k}, \mathsf{s}_2^{\X_j})} \lesssim  2^k, \qquad j=1,2.\end{equation}
Finally, an application of Proposition \ref{ovcarlemb} ensures that 
\begin{equation}
\label{excset3}
\|F_{\phi^3_k }(f_3)\|_{L^{\infty}(\R^{d+1}_+ , \mathsf{r}_\infty^{\X_3})} \lesssim 1.\end{equation}
Relying on \eqref{suppF}, applying H\"older's inequality for outer measures Lemma \ref{outerholder}, and taking advantage of  \eqref{excset2} and \eqref{excset3}, we complete the proof as follows:
\[
\begin{split}
&\quad |{\mathsf{PP}_\mathsf{w}^3}(f_1,f_2,f_3)|\\   &= \sum_{k \geq 0}2^{-40k}\int_{\R^{d+1}_+\backslash  \widehat{E_k}} \big|\mathsf{w} \big( F_{\phi^1  }(f_1) , F_{\phi^2 }(f_2) ,   F_{\phi^3_k}(f_3)  \big)\big| \, \frac{\d x \d t}{t}\\ & \lesssim  \sum_{k \geq 0}2^{-40k} \left( \prod_{j=1,2}\|F_{\phi^j } (f_j)\|_{L^{2}(\R^{d+1}_+\backslash E_k , \mathsf{s}_2^{\X_j})} \right)  \|F_{\phi^3_k } (f_3)\|_{L^{\infty}(\R^{d+1}_+ , \mathsf{r}_\infty^{\X_3})}\\ & \lesssim 1, \end{split}
\]
which completes the proof.
{{} When the parameters $\alpha_j,\beta_j$ are generic, the same exact argument may be performed, with the following two modifications. In the definition \eqref{excsetdef2} of the set $F_3'$, replace $9$ by $9\beta^{-1}$, where $\beta=\min\{\beta_1,\beta_2\}$ and choose $C$ in \eqref{excsetdef} accordingly. In the definition \eqref{suppF}, still referring to the notation \eqref{exccoll}, replace $\mathsf T_1(\cic{Q}_{f_j, C2^{k}})$ by $\mathsf T_\beta(\cic{Q}_{f_1, C2^{k}})$ for $j=1,2$. }
    \end{proof}

%

\section{Proofs of the RMF lemmata}\label{ssprfrmf}
\begin{proof}[Proof of Lemma \ref{lemmastep}]  By dilation invariance, it suffices to argue in the case $\alpha=1$.
 Let $\phi \in \cic{\Phi}$. We recall from \cite[3.1.2, Lemma 2]{Bible} the decomposition
\begin{equation}\label{Rbdlemma1}
\phi= \sum_{k =0}^\infty \Psi_{2^{-k}} * \eta_k, \qquad \|\eta_{k}\|_1 \lesssim  2^{-2kd}. 
\end{equation}
By virtue of the compact support of $\Psi$, we have, for all $h \in L^1(\R)$, 
$$
  f*\Psi_t*h(y) = \int_{\R^d} f*\Psi_t(y-u) h(u) \, \d u = \int_{ \Gamma^{3\rho}(x)} f*\Psi_t(y-u) h(u) \,\d u
$$
thus, since $L^1$-averaging preserves $\mathcal R$-bounds (see Lemma \ref{Rbounds}), 
$$
\mathcal R\left(\big\{ f* \Psi_t*h (y) : (y,t) \in \Gamma(x)  \big\} \right) \leq \|h\|_1 \mathcal R\left(\big\{ f* \Psi_t (y) : (y,t) \in \Gamma^{3\rho}(x)  \big\} \right).
$$
Using additivity of $\mathcal R$-bounds, and coupling the   decomposition  \eqref{Rbdlemma1} with the above estimate written for $2^{-k}t$ in place of $t$ and $\eta_k$ in place of $h$, we get, for all $x \in \R^d$,
$$
\M_{\cic{\Phi}}  f (x)  \lesssim \sum_{k =0}^\infty 2^{-2dk} \mathcal R\left(\big\{ f* \Psi_{2^{-k}t} (y) : (y,t) \in \Gamma^{3\rho}(x)  \big\} \right)
= \sum_{k =0}^\infty 2^{-2dk} \M_{\Psi,3\rho2^k} f(x) .
$$
The proof is completed by taking $L^p$-norms and noting that    $$\|\M_{\Psi,3\rho2^k}  \|_{p\to p } = 2^{-\frac{dk}{p}} \|\M_{\Psi,3\rho }  \|_{p\to p } ,$$ by virtue of dilation invariance.
\end{proof}

\begin{proof}[Proof of Lemma \ref{lemmaRMF1}] The first statement, on $L^q$ bounds, follows by Lemma \ref{lemmastep} and interpolation of the  third (BMO bound) and fourth (weak-$L^1$ bound) statement. The fourth statement follows from the second ($H^1\to L^1$ bound) by a standard Calder\'on-Zygmund decomposition argument.

 By a variant of the argument in Lemma \ref{lemmastep} and a tensor product argument it suffices to check the second and third statement for $d=1$ and with $\M_{\cic{\Phi}}$ replaced by $\M_\Psi$  for a fixed choice of   nonnegative Schwartz function $\Psi:\R\to \R$ with integral 1 and support contained in $B_\rho(0)$.   

We begin with the proof of the $H^{1}(\R;{\X_3}) \to L^{1}(\R) $ bound. Let $f:\R^d \to {\X_3}$ be a function supported on an interval $Q\subset \R$, having mean zero and such that $\|f\|_{L^\infty(\R; 
\mathcal X_3)}\leq |Q|^{-1}$. Let $F$ denote the (compactly supported) primitive of $f$. Clearly $\|F\|_{p}\leq |Q|^{1/p}$.
We first bound 
\begin{equation}
\label{atom1}
\|\M_{\Psi} f \cic{1}_{3\rho Q}\|_{1} \lesssim  |Q|^{\frac{1}{p'}}  \|\M_{\Psi} f \|_{p} \lesssim |Q|^{\frac{1}{p'}}  \|  f \ \|_{p}  \lesssim |Q|  \|  f  \|_{\infty} \lesssim 1, 
\end{equation}
using the nontangential RMF$_p$ property in the second step. Let $$A_k=2^{k+1}{3\rho Q}\backslash 2^{k+1}{3\rho Q}.$$ Notice   that for $x \in A_k$, $f*\Psi_t(y)\cic{1}_{\Gamma(x)}(y,t)\neq 0$ if and only if  $t>2^{k}|Q|$. Again for  $x \in A_k$, integrating by parts, 
\[\begin{split}
\M_{\Psi} f (x)=\mathcal R\left(\big\{f*{\Psi_t} (y)\cic{1}_{\Gamma(x)}(y,t)  \big\} \right) &= \mathcal R\left(\big\{ t^{-1}F*{\Psi'_t} (y)  \cic{1}_{\Gamma(x)}(y,t)  \big\} \right) \\ &\lesssim  2^{-k} |Q|^{-1}  \M_{\cic{\Phi}} F (x)\end{split}
\]
using the obvious membership of $\Psi'$ to the class $\cic{\Phi}$.
By Lemma \ref{lemmastep}, we can employ the $L^p$ boundedness of  $\M_{\cic{\Phi}} $ as well, and obtain
\begin{align*}
\|\M_{\Psi} f \cic{1}_{(3\rho Q)^c}\|_{1}& \lesssim  |Q|^{-1} \sum_{k \geq 1} 2^{-k}   \|\M_{\Psi'} F \cic{1}_{A_k} \|_{1} \lesssim |Q|^{-1}  \sum_{k \geq 1} 2^{-k} \| \cic{1}_{A_k} \|_{p'} \|\M_{\cic{\Phi}} F \cic{1}_{A_k} \|_{p} \\ & \lesssim |Q|^{-\frac1p} \|F   \|_{p} \sum_{k \geq 1} 2^{-k/p} \lesssim 1.\end{align*}
Combining the last display with \eqref{atom1}, we conclude $\|\M_{\Psi} f  \|_{1} \lesssim 1$, and the  $H^1\to L^1$ bound follows.

We turn to the proof of the $\mathrm{BMO}(\R;{\X_3}) \to \mathrm{BMO}(\R)$ bound for $\M_{\Psi}$. Let us be given $f \in \mathrm{BMO}(\R;{\X_3})$ and an interval $Q$ with center $x_Q$. Setting $g= f\cic{1}_{3\rho Q}$,  we can assume $g$ has integral zero. 
Define$$
T_Q=\{(y,t): 0<t<3\rho |Q|, |y-x_Q|<3\rho |Q|-t\}, \; 
c_Q= \mathcal R\left(\big\{f*{\Psi_t} (y): (y,t) \not \in T_Q \big\} \right).
$$
Using subadditivity of $\mathcal R$-bounds with respect to union and the compact support of  $\Psi$, we have, for $x \in Q$, that 
\begin{align*}
&\quad\mathcal R\left(\big\{f*{\Psi_t} (y) : (y,t) \in  \Gamma(x)\big\} \right) \leq \mathcal R\left(\big\{f*{\Psi_t} (y): (y,t) \in T_Q \cap \Gamma(x)\big\} \right) + c_Q \\ &= \mathcal R\left(\big\{g*{\Psi_t} (y): (y,t) \in T_Q \cap \Gamma(x)\big\} \right) + c_Q \leq \M_{\Psi} g(x) + c_Q.    
\end{align*}
Thus, relying on John-Nirenberg inequality in the last step,
\begin{align*}
  \big\|(\M_\Psi f- c_Q)\cic{1}_Q \big\|_{p}^p \leq  \big\|\M_{\Psi} g \cic{1}_Q \big\|_{p}^p \leq   \| g \|_{p}^p \lesssim |Q|\|f\|_{\mathrm {BMO}}^p.
\end{align*}
Dividing and taking supremum over $Q$, another application of John-Nirenberg finishes the proof.
\end{proof}
  \begin{proof}[Proof of Lemma \ref{lemmaRMF2}] Assertion (1) follows from Kahane's contraction principle. Assertion (2) is immediate: see Remark \ref{remtype2}. 
To prove (3), we note that $\X_1,\X_2$, being UMD, have finite cotype, so that, for all finite sequences $\cic{\xi}=\{\xi_1,\ldots,\xi_N\} \subset {\X_3}$ 
$$
\mathcal R_{\mathcal{X}_1,\X_2 } (\cic{\xi}) \sim \Big\| t\mapsto \sup_{j=1,\ldots, N} |\xi_j(t)|\Big\|_{{\X_3}}
$$
with implicit constant depending only on the cotype character of $\X_k$, $k=1,2$. Thus
$$
\M_{\Psi} f(x) \lesssim \|M_{\mathrm{ltc}} f(x) \|_{{\X_3}}, \qquad M_{\mathrm{ltc}} f(x)= \Big\{ u \mapsto \sup_{(y,t) \in \Gamma(x)} |f(\cdot,u)*\Psi_t(y)|\Big\}.
$$
It is a result of Rubio De Francia \cite[Theorem 3]{RDF86} that $M_{\mathrm{ltc}}$ maps $L^p(\R^d,{\X_3})$ into itself for all $1<p<\infty$ if (and only if) ${\X_3}$ is UMD. This completes the proof of (2).

 The proof of  (4) follows the same outline of the corresponding dyadic version (see Remark \ref{dyRMF} below), but the noncommutative Doob's maximal inequality by Junge \cite{Junge02} is replaced by a result due to Tao Mei  \cite{Mei07}.
Recall that we have to bound the nontangential RMF corresponding to the $\mathcal R$-bound
$$
\mathcal R(\{a_1,\ldots,a_N\}) = \sup_{\|\{\lambda_n\}\|_{\ell^2_N}=1} \left(\E\Big\|\sum_{n=1}^N r_n\lambda_n  a_n
   \Big\|_{{\X_3}}^2\right)^{\frac12}.
$$
We turn to this task.
First of all, by Lemma \ref{lemmaRMF1}, it suffices to prove that ${\X_3}=L^p(\mathcal A, \tau)$ has the nontangential $\mathrm{RMF}_p$ property.  Write $L^p(\mathcal A, \tau)=L^p(\mathcal A)$ and similarly for other exponents. 
By a standard approximation argument, it suffices to show that, for a fixed  $\Psi$ as in Lemma \ref{lemmastep}, for any integer $N$ and any choice of  $y_n \in \R^d, t_n>0$, $n=1,\ldots, N$,  the maximal operator
$$
\widetilde \M f(x) =   \mathcal R\left(\big\{f* \Psi_{t_n} (y_n): (y_n,t_n) \in \Gamma(x), n=1,\ldots, N  \big\} \right).
$$
maps   $    L^p(\mathbb R; L^p(\mathcal A))  $ into $L^p$ boundedly.
We denote $B=L^\infty(\R) \otimes \mathcal A$, which is also a von Neumann algebra, and  identify $ L^p(\mathbb R;L^p(\mathcal A)) \sim L^p(B)  $. Then, we re  recall from  \cite[Theorem 3.4 (ii)]{Mei07} that for $f \in L^p(B)   $  there exists a pair  $a,b \in  L^p(B) $ and contractions $\xi_1,\ldots \xi_n \in B $ such that for all $ (y_n,t_n) \in \Gamma(x)$ 
$$
 F_n(x):=
f* \Psi_{t_n} (y_n) = a(x) \xi_n(x) b(x), \qquad \|a\|_{L^{2p}(\B)}\|b\|_{L^{2p}(\B)}\lesssim_p \|f\|_{L^{p}(\B)}.
$$ The argument of \cite{Mei07} is presented for the Poisson kernel, but it is easily seen to hold \emph{verbatim} for any $\Psi$ as in Lemma \ref{lemmastep}.
Now,
fix $\{\lambda_j\} \in \ell^2_N$ of unit norm. We then have
$$
 \E \left\|
\sum_{n=1}^N r_n \lambda_n F_n(x)   \right\|_{L^{p}(\mathcal A)} \leq  \E\left\| \sum_{n=1}^N r_n \lambda_n \xi_n(x) b(x)   \right\|_{L^{2p}(\mathcal A)} \|a(x)\|_{L^{2p}(\A)}.
$$
Therefore, using that ${L^{2p}(\mathcal A)} $ has type 2, \begin{align*} 
\E\left\| \sum_{n=1}^N r_n \lambda_n \xi_n(x) b(x)   \right\|_{L^{2p}(\mathcal A)} \lesssim \left( \sum_{n=1}^N \left\|  \lambda_n \xi_n(x) b(x)   \right\|_{L^{2p}(\mathcal A)}^2 \right)^{\frac12} \lesssim \|b(x)\|_{L^{2p}(\mathcal A)},
\end{align*}
and combining the last two estimates yields
$$
\widetilde \M f(x)\lesssim\|a(x)\|_{L^{2p}(\mathcal A)}\|b(x)\|_{L^{2p}(\mathcal A)}.
$$
Integrating over $x$ and using  $\|a\|_{L^{2p}(\B)}\|b\|_{L^{2p}(\B)}\lesssim_p \|f\|_{L^{p}(\B)}
$ we obtain  the claimed $L^p$ bound for $\widetilde \M$. The proof is thus completed.
\end{proof}

\section{Proof of Theorems \ref{onepara} and \ref{bipara}}
\label{SecT1pf}
 Once the UMD-valued outer measure theory is in place, our proofs have the same general scheme as \cite{DoThiele15},  essentially being simple applications of the lacunary Carleson embedding theorems of Section \ref{SecLCET}.
For the sake of clarity, we first present the case where $\Lambda$ is the dual form  to an operator $T$, which is the extension to UMD-valued functions of an operator acting on scalar valued functions and then discuss the necessary modifications for the operator-valued case.
\subsection{The case of a scalar kernel} We deal with the case $\X_1=\X,$ $\X_2=\X'$. Let $T$ be an operator mapping Schwartz functions on $\R$ into tempered distributions.  The $\X$-valued extension of $T$ is then defined as
\[
Tf=\sum_{k=1}^N T(\phi_k) x_k, \qquad f=\sum_{k=1}^N \phi_k x_k, \; \phi_k \in \mathcal S(\R), x_k\in \X.
\] 
With the notations of \eqref{Adef}, we assume that
\begin{equation}
\sup_{z,w \in \R^2_+}
A(z,w)|\langle T(\phi_{z}),\phi_{w}\rangle|\leq1. \label{T1ass}
\end{equation}
and prove that, for  $1<p<\infty$
\begin{equation}
\label{T1scal}
\|T\|_{L^p(\R;\X)\rightarrow L^p(\R;\X)}\leq C_p,
\end{equation}
for some constant $C_p$ depending only on $\phi$, $p$ and on the UMD character of $\X$.
By density, it suffices to prove $$
\Big|\int_\R \l Tf(x),g(x)\r \, \d x \Big|\lesssim  \|f\|_{L^p(\R;\X)}   \|g\|_{L^{p'}(\R;\X')} $$
with $f$, $g$ Schwartz functions respectively taking values in a finite rank subspace of $\X,\X'$.  For such functions,  Calder\'on's reproducing formula continues to hold:
\begin{equation}
\label{CRF}
f(\cdot)=C\int_0^\infty \int_{\R} F_{\phi}(f)(x,s)\phi_{x,s}(\cdot)\,\d x \frac{\d s}{s},
\end{equation}
and similarly for $g$. Then the continuity assumption on $T$ implies that
\[
\langle T(f),g\rangle=C\int_0^\infty \int_0^\infty \int_{\R}\int_{\R}\langle F_\phi(f)(x,s),F_\phi(g)(y,t)\rangle\langle T(\phi_{x,s}),\phi_{y,t}\rangle\,\d x\d y\frac{\d s}{s}\frac{\d t}{t}.
\]
Now set
\[
r:=\max (s,t,|y-x|).
\]
We split the integration domain into several parts and estimate each of them separately. When $r>|y-x|$, by symmetry it suffices to look at the region $r=s\geq t$, which yields
\[
\begin{split}
&|\int_0^\infty\int_{\R}\int_0^s\int_{x-s}^{x+s} \langle F_\phi(f)(x,s),F_\phi(g)(y,t)\rangle\langle T(\phi_{x,s}),\phi_{y,t}\rangle \,\d y\frac{\d t}{t}\d x\frac{\d s}{s} |\\
\lesssim &\int_0^\infty\int_{\R}\int_0^s\int_{x-s}^{x+s} | \langle F_\phi(f)(x,s),F_\phi(g)(y,t)\rangle| \,\d y\d t\d x\frac{\d s}{s^3},
\end{split}
\]
which by change of variables $y-x=\alpha s$ and $t=\beta s$ is equal to
\[
\int_{0}^1\int_{-1}^1\int_0^\infty\int_{\R}|\langle F_{\phi}(f)(x,s),F_{\phi,\alpha,\beta}(g)(x,s)\rangle| \,\d x\frac{\d s}{s}\d \alpha\d\beta.
\]
According to  Proposition \ref{outerCS} it is easy to see that the above is bounded by
\[
\begin{split}
&\lesssim\int_0^1\int_{-1}^1 \|F_{\phi}(f)\|_{L^p(\R^2_+,\mathsf{s}_p^{\X})}\|F_{\phi,\alpha,\beta}(g)\|_{L^{p'}(\R^2_+,\mathsf{s}_{p'}^{\X'})}\,\d\alpha\d\beta\\
&\lesssim \int_0^1\int_{-1}^1 \beta^{-\eps}\|f\|_{L^p(\R;\X)}\|g\|_{L^{p'}(\R;\X')}\,\d\alpha\d\beta\lesssim \|f\|_{L^p(\R;\X)}\|g\|_{L^{p'}(\R;\X')},
\end{split}
\]
where the first inequality on the second line is due to the Carleson embedding estimate \eqref{LqSqemb}
Now we turn to the region where $r=|y-x|$, and by symmetry it suffices to estimate the part $r=y-x$ and $t\leq s$. One has
\[
\begin{split}
&\quad|\int_0^\infty\int_{\R}\int_0^r\int_0^s \langle F_\phi(f)(x,s),F_\phi(g)(x+r,t)\rangle\langle T(\phi_{x,s}),\phi_{x+r,t}\rangle \,\frac{\d t}{t}\frac{\d s}{s}\d x\d r|\\
\leq &\int_0^\infty\int_{\R}\int_0^r\int_0^s |\langle F_\phi(f)(x,s),F_\phi(g)(x+r,t)\rangle| \,\d t\frac{\d s}{s} \d x\frac{\d r}{r^2},
\end{split}
\]
which by change of variables $s=\gamma r$ and $t=\beta r$ is equal to
\[
\int_0^1\int_0^\gamma \int_0^\infty\int_{\R} |\langle F_{\phi,0,\gamma}(f)(x,r),F_{\phi,1,\beta}(g)(x,r)\rangle|\,\d x\frac{\d r}{r}\d\beta\frac{\d\gamma}{\gamma}.
\]
Similarly as in the previous case, applying Proposition \ref{outerCS}  together with Theorem \ref{cet} shows that the last display is bounded by
\[
\begin{split}
&\lesssim \int_0^1\int_0^{\gamma} \|F_{\phi,0,\gamma}(f)\|_{L^p(\R^2_+,\mathsf{s}_p^{\X})}\|F_{\phi,1,\beta}(g)\|_{L^{p'}(\R^2_+,\mathsf{s}_{p'}^{\X'})}\,\d\beta\frac{\d\gamma}{\gamma}\\
&\lesssim \int_0^1\int_0^{\gamma} \gamma^{-\eps}\beta^{-\eps}\|f\|_{L^p(\R;\X)}\|g\|_{L^{p'}(\R;\X')}\,\d\beta\frac{\d\gamma}{\gamma}\lesssim \|f\|_{L^p(\R;\X)}\|g\|_{L^{p'}(\R;\X')}.
\end{split}
\]
As all the other symmetric regions can be estimated analogously, the proof of estimate \eqref{T1scal} is complete.
\subsection{Extension to operator-valued singular integrals} 
 
The proof of Theorem \ref{onepara} in its full generality is very similar to the scalar valued case. 
Again, we argue with $f$, $g$ Schwartz functions respectively taking values in a finite rank subspace of $\X_1,\X_2$, and employ Calder\'on's reproducing formula to write
\[
\Lambda(f,g)=C\int_0^\infty\int_{\R}\int_0^\infty\int_{\R}\Lambda(F_{\phi}(f)(x,s)\phi_{x,s},F_{\phi}(g)(y,t)\phi_{y,t})\,\d y\frac{\d t}{t}\d x\frac{\d s}{s}.
\]

As before, let $r:=\max(s,t, |y-x|)$. Using symmetry, it thus suffices to discuss the region $r=s$ and $r=y-x\geq s\geq t$. The main observation is that \eqref{newwbp} implies that the (weakly measurable) families of $B(\X_1,\X_2)$ bilinear forms \begin{align*} &
\Gamma^{<,\alpha,\beta}_{(x,s)}(\xi,\eta)= A(x,s,x+\alpha s, \beta s)\Lambda( \phi_{x,s}\xi, \phi_{x+\alpha s,\beta s} \eta),  \\ 
&\Gamma^{>,\alpha,\beta}_{(x,r)}(\xi,\eta)= A(x,\gamma r,x+r, \beta r)\Lambda(  \phi_{x,\gamma r}\xi, \phi_{x+r,\beta r} \eta ), \qquad 
\end{align*}
are $\mathcal R$-bounded by 1 (in particular, uniformly in $\alpha,\beta$).
We only discuss the region $r=s$ here. One has
\[
\begin{split}
&|\int_0^\infty\int_{\R}\int_0^s\int_{x-s}^{x+s} \Lambda(F_{\phi}(f)(x,s)\phi_{x,s},F_{\phi}(g)(y,t)\phi_{y,t})\,\d y\frac{\d t}{t}\d x\frac{\d s}{s}|\\
\leq&\int_0^1\int_{-1}^1\int_0^\infty\int_{\R} |\Gamma^{>,\alpha,\beta}_{(x,s)}(F_{\phi}(f)(x,s),F_{\phi,\alpha,\beta}(g)(x,s) \rangle| \,\d x\frac{\d s}{s}\d\alpha\d\beta,
\end{split}
\]
where the last inequality follows from the change of variables $y=x+\alpha s$, $t=\beta s$, and the fact that in the region $r=s$, $A(x,s,x+\alpha s,\beta s)=s/\beta$. 
Therefore, applying Proposition \ref{outerCS} with functions $F_1=F_{\phi}(f)$, $F_2=F_{\phi,\alpha,\beta}(g)$, and using  $\mathcal R$-boundedness of $\{\Gamma_{(x,s)}^{<,\alpha,\beta}\}$,  one obtains that the above is
\[
\lesssim \int_{0}^1\int_{-1}^1 \|F_{\phi}(f)\|_{L^p(\R^2_+,\mathsf{s}_p^{\X_1})}\|F_{\phi,\alpha,\beta}(g)\|_{L^{p'}(\R^2_+,\mathsf{s}_{p'}^{\X_2})}\,\d\alpha\d\beta\lesssim \|f\|_{L^p(\R;\X_1)}\|g\|_{L^{p'}(\R;\X_2)},
\]
which concludes the proof.

 \subsection{Proof of Theorem \ref{bipara}}

We proceed with the proof of Theorem \ref{bipara}. For $a\in L^q(\R;\X_1)$, $b\in L^{q'}(\R;\X_2)$, $z_1=(x_1,s_1)\in \R^2_+, w_1=(y_1,t_1)\in \R^2_+$, define the forms
\[
Q^1_{z_1,w_1}(a,b):=A(z_1,w_1)\Lambda(\phi_{z_1}a,\phi_{w_1}b).
\]
By a limiting argument, we can assume that $\X_1$ is finite dimensional, and that $\mathcal{S}(\R)\otimes L^q(\R;\X_1)$ is dense in $L^p(\R;L^q(\R;\X_1))$, and same for $\X_2$. Thus, we can identify $\Lambda$ with a form mapping $$\Lambda:L^p(\R;L^q(\R;\X_1))\times L^{p'}(\R;L^{q'}(\R;\X_2))\to \mathbb{C}.$$ We then have by Theorem \ref{onepara}
\[
\|\Lambda\|_{B(L^p(\R;L^q(\R;\X_1)), L^{p'}(\R;L^{q'}(\R;\X_2)))}\lesssim \mathcal{R}_{L^q(\R;\X_1),L^{q'}(\R;\X_2)}(\{Q^1_{z_1,w_1}:z_1,w_1\in\R^2_+\}),
\]
which by Lemma \ref{propertyalpha} below is bounded by
\[
\begin{split}
&\lesssim \mathcal{R}_{\X_1,\X_2}(\{(\xi,\eta)\mapsto A(z_2,w_2)Q^1_{z_1,w_1}(\psi_{z_2}\xi,\psi_{w_2}\eta):z_1,z_2,w_1,w_2\in\R^2_+\})\\
&=\mathcal{R}_{\X_1,\X_2}(\{Q_{z_1,z_2,w_1,w_2}:z_1,z_2,w_1,w_2\in\R^2_+\}).
\end{split}
\]
Thus the proof is complete.
It remains to prove the following lemma, which takes advantage of the property ($\alpha$) of the spaces $\X_1,\X_2$.
\begin{lemma}\label{propertyalpha}
Let $\X_1,\X_2$ have property $(\alpha)$. Then, for a family of bilinear forms $$\Lambda_j:\mathcal{S}(\R)\otimes\X_1\times\mathcal{S}(\R)\otimes\X_2\rightarrow \mathbb{C}, \qquad j \in\mathcal{J},$$ there holds
\[
\begin{split}
&\mathcal{R}_{L^{p}(\R;\X_1),L^{p'}(\R,\X_2)}(\{\Lambda_j:j\in\mathcal{J}\})\\
\lesssim &\mathcal{R}_{\X_1,\X_2}(\{(\xi,\eta)\mapsto A(z,w)\Lambda_j(\phi_{z}\xi,\phi_{w}\eta):z,w\in\R^2_+,j\in\mathcal{J}\}).
\end{split}
\]
\end{lemma}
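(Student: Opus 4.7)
The plan is to reduce the statement to Theorem \ref{onepara} applied on the Rademacher-valued spaces $\mathrm{Rad}(\X_k)$. Unpacking the definition of the left-hand side, it suffices to prove, for every $N\in\mathbb N$ and every choice of finite sequences $(f^i)_{i=1}^N\subset \mathcal S(\R;\X_1)$, $(g^i)_{i=1}^N\subset \mathcal S(\R;\X_2)$ and $(j_i)_{i=1}^N\subset \mathcal{J}^N$, the estimate
\[
\Big|\sum_{i=1}^N\Lambda_{j_i}(f^i,g^i)\Big|\lesssim K\,\Big\|\sum_i r_if^i\Big\|_{\mathrm{Rad}(L^p(\R;\X_1))}\Big\|\sum_i r_ig^i\Big\|_{\mathrm{Rad}(L^{p'}(\R;\X_2))},
\]
where $K$ denotes the $\mathcal R_{\X_1,\X_2}$-bound appearing on the right-hand side of the lemma. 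I would then introduce the auxiliary bilinear form
\[
\tilde\Lambda:\mathcal S(\R;\mathrm{Rad}(\X_1))\times\mathcal S(\R;\mathrm{Rad}(\X_2))\to\mathbb C,\qquad \tilde\Lambda(F,G):=\sum_{i=1}^N\Lambda_{j_i}(F^i,G^i),
\]
where $F^i, G^i$ denote the $i$-th Rademacher coefficients of the Rademacher expansions of $F$ and $G$. Since $\mathrm{Rad}(\X_k)$ is a closed subspace of the UMD space $L^2([0,1);\X_k)$, it is itself UMD, so Theorem \ref{onepara} is applicable to $\tilde\Lambda$ as soon as the testing condition \eqref{newwbp} has been verified.

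The verification of \eqref{newwbp} is the step where property $(\alpha)$ enters. Evaluating $\tilde Q_{z,w}$ at Rademacher sums of constant $\X_k$-vectors $\xi_i\in\X_1,\eta_i\in\X_2$ produces, by linearity and the definition of $\tilde\Lambda$,
\[
\tilde Q_{z,w}\Big(\sum_i r_i\xi_i,\sum_i r_i\eta_i\Big)=\sum_{i=1}^N A(z,w)\Lambda_{j_i}(\phi_z\xi_i,\phi_w\eta_i)=\sum_{i=1}^N Q^{j_i}_{z,w}(\xi_i,\eta_i),
\]
which identifies $\tilde Q_{z,w}$ with the bilinear form on $\mathrm{Rad}(\X_1)\times\mathrm{Rad}(\X_2)$ induced by the tuple $(Q^{j_i}_{z,w})_{i=1}^N\subset B(\X_1,\X_2)$ in the sense of \eqref{rbound-def}. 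Applying Lemma \ref{lemmaalpha} to the indexing set $Z=\R^2_+\times\R^2_+\times \mathcal J$ and to the family $\{Q^j_{z,w}\}$ thus gives
\[
\mathcal R_{\mathrm{Rad}(\X_1),\mathrm{Rad}(\X_2)}(\{\tilde Q_{z,w}:z,w\in\R^2_+\})\lesssim \mathcal R_{\X_1,\X_2}(\{(\xi,\eta)\mapsto A(z,w)\Lambda_j(\phi_z\xi,\phi_w\eta):z,w\in\R^2_+,\,j\in\mathcal J\})=K.
\]

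To conclude, I would apply Theorem \ref{onepara} to $\tilde\Lambda$ with $F:=\sum_ir_if^i$ and $G:=\sum_ir_ig^i$; by construction $\tilde\Lambda(F,G)=\sum_i\Lambda_{j_i}(f^i,g^i)$, and the sought estimate then follows upon invoking the routine norm equivalence $\|F\|_{L^p(\R;\mathrm{Rad}(\X_1))}\sim\|\sum_ir_if^i\|_{\mathrm{Rad}(L^p(\R;\X_1))}$, which is a Fubini plus Kahane-Khintchine calculation (and likewise for $g^i$ with exponent $p'$). The only delicate point I anticipate is the careful bookkeeping in the identification of $\tilde Q_{z,w}$ with the tuple-induced form $\Lambda_{\cic z}$ of Lemma \ref{lemmaalpha}; once this identification is in place, no further ideas are required and the argument is purely mechanical.
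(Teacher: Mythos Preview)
Your proposal is correct and follows essentially the same route as the paper: both introduce the Rademacherized form $\tilde\Lambda$ (the paper calls it $\cic\Lambda$), apply Theorem \ref{onepara} on the UMD spaces $\mathrm{Rad}(\X_k)\cong L^p(\R;\mathrm{Rad}(\X_k))$, and then invoke Lemma \ref{lemmaalpha} to pass from $\mathcal R_{\mathrm{Rad}(\X_1),\mathrm{Rad}(\X_2)}$ to $\mathcal R_{\X_1,\X_2}$. Your identification of $\tilde Q_{z,w}$ with the tuple-induced form of \eqref{rbound-def} is in fact spelled out more carefully than in the paper's version.
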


\begin{proof}
Let $\Lambda_1,\ldots,\Lambda_N$ be such that the LHS is bounded by $$2\|\cic{\Lambda}\|_{B(\mathrm{Rad}(L^{p}(\R;\X_1)), \mathrm{Rad}(L^{p'}(\R;\X_2)))},$$ where
\[
\cic{\Lambda}\left(\sum_{j=1}^N \eps_jf_j,\sum_{j=1}^N \eps_jg_j\right):=\sum_{j=1}^N \Lambda_j(f_j,g_j).
\]
Identifying $L^{p}(\R;\mathrm{Rad}(\X_1))$ with $\mathrm{Rad}(L^{p}(\R;\X_1))$ and the same for $\X_2$,  Theorem \ref{onepara}  yields 
\[
\begin{split}
&\|\cic{\Lambda}\|_{B(\mathrm{Rad}(L^{p}(\R;\X_1)), \mathrm{Rad}(L^{p'}(\R;\X_2)))}=\|\cic{\Lambda}\|_{B(L^{p}(\R;\mathrm{Rad}(\X_1)),L^{p'}(\R;\mathrm{Rad}(\X_2)))}\\
\lesssim &\mathcal{R}_{\mathrm{Rad}(\X_1),\mathrm{Rad}(\X_2)}\Big(\big\{(\sum_{j=1}^N \eps_j\xi_j,\sum_{j=1}^N\eps_j\eta_j)\mapsto \sum_{j=1}^N A(z_j,w_j)\Lambda_j(\phi_{z_j}\xi_j,\phi_{w_j}\eta_j)\\ &\qquad\qquad \qquad \quad  :z_j,w_j\in\R^2_+,j\in\mathcal{J}\big\}\Big)\\
\leq &\mathcal{R}_{\X_1,\X_2}(\{(\xi,\eta)\mapsto A(z,w)\Lambda_j(\phi_{z}\xi,\phi_{w}\eta),\,z,w\in\R^2_+,j\in\mathcal{J}\}),
\end{split}
\]
where the last step follows from property ($\alpha$) in the form of Lemma \ref{lemmaalpha}.
\end{proof}
\section{Decomposition into model paraproducts}\label{Secdec}
In this section, we show how to recover a Coifman-Meyer  multiplier of the type occurring in Theorem \ref{CMT} as an average of model paraproducts. The argument is standard, but due to the unusual setting, we include some of the details. 

Let $f_j\in \mathcal S(\R^d; \X_j)$, $j=1,2,3$. Let $\phi:\R^d \to \R$ be a smooth radial, nonzero function with compact frequency support which does not contain the origin. Using Calder\'on's reproducing formula as in 
\eqref{CRF}, we may write
\begin{equation}\label{freAK}
\begin{split}
&\quad  \Lambda_m(f_1,f_2,f_3)
\\ &= \int_{(\R^{d+1}_+)^3}  \Lambda_m(F_\phi(f_1)(x,s) \phi_{x,s},F_\phi(f_2)(y,t) \phi_{y,t},F_\phi(f_3)(z,u) \phi_{z,u})\, \frac{\d z\d u}{u}\frac{\d y\d t}{t}\frac{\d x \d s}{s}
\end{split}
\end{equation}
The integral above can  be split into  three summands in the regions $t<A^{-1} s$, $t>As$, $A^{-1}s <t< As $ for $A>1$ fixed, each of which  is an average of paraproducts of a given type. We deal explicitly with the first region, which will yield a paraproduct of type 2. By a suitable change of variables, this part of \eqref{freAK} turns into
\begin{equation} \label{freak11}
\begin{split}
   \int_{\R^{d+1}_+} \int_{\R^d \times (0,A^{-1})} \int_{\R^d \times (B^{-1},B) } \Lambda_m\big(&F_\phi(f_1)(x,s) \phi_{x,s},F_\phi(f_2)(x+\alpha s,\beta s) \phi_{x+\alpha s,\beta s}, \\ & F_\phi(f_3)(x+\gamma s,\delta s) \phi_{x+\gamma s,\delta s}\big)   \,   \frac{s^d \d \gamma \d \delta}{\delta} \frac{s^d \d \alpha \d \beta}{\beta}\frac{\d x \d s}{s} 
\end{split}
\end{equation}
where the restriction on the range of $\delta$ comes from frequency support considerations, and the constant $B$ depends on $A$ and the support of $\phi$.
Let $
\psi= c\int_0^{A^{-1}} \phi
$.
Defining the trilinear forms on $\X_1,\X_2,\X_3$
\begin{equation} \label{wdef}
 \mathsf{w}_{x,s}^{\alpha,\gamma,\delta}(v_1,v_2,v_3):=s^{2d}\Lambda_m(v_1 \phi_{x,s}, v_2 \psi_{x+\alpha s, A^{-1}s},v_3\phi_{x+\gamma s,\delta s}), 
\end{equation}
 performing the integration in $\beta$ and exchanging the order of the remaining integrals, we obtain that \eqref{freak11} is equal to
 \begin{equation*}\begin{split}
   &\quad \int_{(\R^d)^2}  \int_{B^{-1}}^{B }\left( \int_{\R_{+}^{d+1}} \mathsf{w}_{x,s}^{\alpha,\gamma,\delta}\left(F_{\phi,0,1}(f_1)(x,s)  ,F_{\psi,\alpha,1}(f_2)(x, s),F_{\phi,\gamma,\delta}(f_3)(x,  s) \right)   \frac{\d x \d s}{s} \right) \\ & \times \d \alpha   \d \gamma  \frac{\d \delta}{\delta}.\end{split}
\end{equation*}
Using the Coifman-Meyer type condition \eqref{derivatives}, and the definition of \eqref{wdef}, it is easy to see that, for all $\delta \in (B^{-1},B ), \alpha,\gamma \in \R^d$,
\begin{equation}
\label{}\mathcal R_{\X_{2}| \X_{1}, \X_{3}}\big( \{\mathsf{w}_{x,s}^{\alpha,\gamma,\delta} : (x,s)\in \R^{d+1}_+\}\big)\lesssim (1+|\alpha| +|\gamma|)^{-N}
\end{equation}
for a suitable large $N$. That is, assumption \eqref{ppc} of the paraproduct theorem is satisfied with sufficiently fast decay in $\alpha,\gamma$, thus allowing to transport the bounds of Proposition \ref{thmmodel} to those of Theorem \ref{CMT}. This completes our decomposition procedure.

\bibliographystyle{amsplain}      

\bibliography{IUMJ-7466-DiPlinio}{}

\end{document}